\newif\ifdraft		%% Additional comments and todos in draft mode
\newif\iflualatex	%% Switch between lualatex and pdflatex
\newif\ifpictures %% Switch if pictures are displayed (only changes in draft mode)
\iflualatex \usepackage{fontspec} \else \usepackage[T1]{fontenc} \fi
	\definecolor{darkblue}{RGB}{0,35,102}
	\definecolor{darkgreen}{RGB}{15,140,15}
\else \setkeys{Gin}{draft} \fi\fi %% hide pictures if flag is set
	\DeclareSymbolFont{Symbols}{OMS}{cmsy}{m}{n}
	\DeclareMathSymbol{\setminus}{\mathbin}{Symbols}{"6E} %% Use standard \setminus symbol
	\colorlet{labelkey}{orange}
	\colorlet{refkey}{orange}
\else\renewcommand{\todo}[2][]{} \fi %% display todos only in draft mode
	\tikzset{>=stealth}
\theoremstyle{plain} %% text is cursive
\newtheorem{theorem}{Theorem}[section]
\newtheorem{lemma}[theorem]{Lemma}
\newtheorem{proposition}[theorem]{Proposition}
\newtheorem{remark}[theorem]{Remark}
\theoremstyle{definition} %% text is "upright"
\newtheorem{definition}[theorem]{Definition}
\newtheorem{assumption}[theorem]{Assumption}
\crefname{assumption}{Assumption}{Assumptions}
\DeclareMathOperator{\SO}{SO}
\DeclareMathOperator{\SL}{SL}
\DeclareMathOperator{\Gl}{Gl}
\DeclareMathOperator{\dist}{dist}
\DeclareMathOperator{\oclass}{o}
\DeclareMathOperator{\sym}{sym}
\DeclareMathOperator{\skewMat}{skew}
\newcommand{\R}{\mathbb{R}}
\newcommand{\N}{\mathbb{N}}
\newcommand{\Z}{\mathbb{Z}}
\newcommand{\eps}{\varepsilon}
\newcommand{\dd}{\mathrm{d}}
\newcommand{\wto}{\rightharpoonup}
\newcommand{\placeholder}{\makebox[1ex]{$\boldsymbol{\cdot}$}}
\newcommand{\grad}[1][]{\if\relax\detokenize{#1}\relax \nabla \else \nabla_{\!#1}\fi}
\newcommand{\el}{\mathrm{el}}
\newcommand{\pl}{\mathrm{pl}}
\DeclarePairedDelimiter{\abs}{\lvert}{\rvert} %% creates \abs{} and \abs*{} commands (with * for \left\right delimiter size)
\DeclarePairedDelimiter{\norm}{\lVert}{\rVert}
\DeclarePairedDelimiter{\of}{\lparen}{\rparen}
\DeclarePairedDelimiterX{\compose}[2]{\lparen}{\rparen}{#1 \:\delimsize\vert\allowbreak\:\mathopen{} #2}
\newcommand\setSymbol[1][]{\nonscript\:#1\vert\allowbreak\nonscript\:\mathopen{}} %% can be used as \given e.g. for multiline sets
\newcommand{\given}{\setSymbol}
\DeclarePairedDelimiterX{\set}[1]{\lbrace}{\rbrace}{\renewcommand{\given}{\setSymbol[\delimsize]}#1} %% Use for sets like \set{x \in \R \given x > 0}
	\newcommand{\abs}[2][1]{}
	\newcommand{\abs*}[1]{}
	\newcommand{\norm}[2][1]{}
	\newcommand{\norm*}[1]{}	
	\newcommand{\of}[2][1]{}
	\newcommand{\of*}[1]{}
	\newcommand{\set}[2][1]{}	
	\newcommand{\set*}[1]{}
	\newcommand{\compose}[3][1]{}
	\newcommand{\compose*}[2]{}
\DeclareMathOperator{\Cont}{C}
\DeclareMathOperator{\SobH}{H}
\DeclareMathOperator{\SobW}{W}
\DeclareMathOperator{\Leb}{L}
\newcommand{\stepemph}[1]{\textsc{#1}}
\newcommand{\matheqmargin}[1]{\setlength{\@mathmargin}{#1}}
\newcommand{\namedlabel}[2][]{\def\@currentlabelname{#1}\label{#2}}
\xdef\tpd{\the\prevdepth}\egroup\prevdepth=\tpd}
\author{Stefan Neukamm, Kai Richter}
\title{Dimension reduction for elastoplastic rods in the bending regime}
\begin{document}
\selectlanguage{english}%
%%% TITLE
\makeatletter%
\begin{flushleft}
	\def\refTUDsymbol{\textasteriskcentered}%
	\newcommand{\refTUD}{\hyperref[Title:TUD]{\textsuperscript{\refTUDsymbol}}}%
	\newcommand{\mail}[2]{\faEnvelope[regular] \hypersetup{hidelinks}\href{mailto:#1}{#1} (#2)}%
	%%
	%% Title
	\LARGE\bfseries
	\parbox[t]{0.7\linewidth}{\flushleft\@title}
	\par \bigskip

	%% Authors
  \large\normalfont
  Stefan Neukamm\refTUD\ $\cdot$
  Kai Richter\refTUD
  \par

  %% Date
  \normalsize
	\mbox{} \hfill \today
	\par \bigskip
	
	%% Institution
	\footnotesize 
	\textsuperscript{\refTUDsymbol}Faculty of Mathematics, Technische Universit{\"a}t Dresden, 01062 Dresden, Germany \label{Title:TUD}
	\par \medskip
	
	%% Emails
	\mail{stefan.neukamm@tu-dresden.de}{S.~Neukamm} \\
	\mail{kai.richter@tu-dresden.de}{K.~Richter}
\end{flushleft}
\makeatother
\vspace{-2\baselineskip} \bigskip\bigskip

%%% ABSTRACT
\paragraph*{Abstract.} 
We rigorously derive an effective bending model for elastoplastic rods starting from three-dimensional finite plasticity. For the derivation we lean on a framework of evolutionary $\Gamma$-convergence for rate-independent systems.
The main contribution of this paper is an ansatz for the mutual recovery sequence in the bending regime. In difference to previous works that deal with infinitesimal deformations in the limit, in the bending regime we are concerned with finite deformations that admit infinitesimally small strain but large rotations. To overcome these difficulties we provide new ideas based on a multiplicative decomposition for the Cosserat rod ansatz.
As regularizing terms, we introduce strain gradient terms into the energy that vanish as the thickness of the rod tends to zero.
\bigskip

%%% KEYWORDS AND CLASSIFIERS
\begin{footnotesize}
\begin{compactdesc}
	\item[\textbf{Keywords:}] finite plasticity $\cdot$ finite elasticity $\cdot$ rate-independent system $\cdot$ evolutionary $\Gamma$-convergence $\cdot$ dimension reduction $\cdot$ rod $\cdot$ bending $\cdot$ strain gradient elasticity.
	\item[\textbf{MSC-2020:}] 74C15 (primary) $\cdot$ 49J45 $\cdot$ 74K10 $\cdot$ 74Q15.
\end{compactdesc}
\end{footnotesize}

%%% ACKNOWLEDGMENTS
\paragraph{Acknowledgments.}
The authors received support from the German Research Foundation (DFG) via the research unit FOR 3013, “Vector- and tensor-valued surface PDEs” (project number 417223351).

%%% TABLE OF CONTENTS
{\hypersetup{hidelinks} \tableofcontents}
\ifdraft\listoftodos\fi

%%% CONTENT
\ifdraft \renewcommand{\input}[1]{\include{#1}} \fi %% Include is faster but starts a new page
% !TEX root = bending_rod_elastoplastic.tex
% !TeX spellcheck = en_US
% !TeX encoding = UTF-8
% !TeX program = pdflatex

\section{Introduction}

The derivation of effective models for thin structures from three-dimensional models is a classical and fundamental topic in the field of continuum mechanics. In particular, the derivation via variational dimension reduction within the framework of $\Gamma$-convergence has been subject of considerabel attention in recent decades. In the case of static, nonlinear elasticity, based on the pioneering work of Friesecke, James and M\"{u}ller \cite{FJM02}, many results have been obtained, e.g., for bending rods \cite{MM03,Neu12,BNS20,BGKN22-PP}, plates in the von K{\'a}rm{\'a}n regime \cite{FJM06,NV13,GOW20}, plates in the bending regime \cite{AD20,BGNPP23,BNPS22}, and for thin films \cite{DKPS21}. The derivation of models in the evolutionary case is relatively recent and less developed. To our knowledge, existing results only address cases with geometrically linear effective models. In particular, in the von K{\'a}rm{\'a}n regime models for elastoplastic plates \cite{Dav14_02}, magnetoelastic plates \cite{BK23}, viscoelastic plates \cite{FK20}, and viscoelastic rods \cite{FM23} have been recently obtained. These results use concepts from evolutionary $\Gamma$-convergence and exploit the fact that the considered energy scaling leads to a linearization at identity. In the evolutionary case, such a linearization has been carried out for the first time in the seminal work by Mielke and Stefanelli \cite{MS13} where linearized elastoplasticity is obtained as a evolutionary $\Gamma$-limit of finite elastoplasticity.

In the present paper we consider the bending regime. In contrast to previous results our effective model is geometrically nonlinear and allows for deformations with large rotations.  More specifically, we discuss the derivation of an elastoplastic bending-torsion rod model in the form of an evolutionary rate-independent system (ERIS). The derivation of the effective model is based on the concept of evolutionary $\Gamma$-convergence for ERIS introduced in \cite{MRS08} and which has also been used in \cite{MS13,Dav14_02}. Our result requires new ideas to deal with the finite deformations in the limit.
Although we focus on elastoplasticity, we believe that the developed ideas can be adapted to different rod models, e.g., ERIS to describe damage, phase separation and swelling models, and rate-dependent models such as viscoelasticity.
\medskip

In the following we present the main setting and results.
Throughout this contribution, we consider an elastoplastic rod $\Omega^h := \omega \times hS$, where $\omega = (0,l)$ and $S \subset \R^2$ denotes the cross-section and $0 < h \ll 1$ is the thickness of the rod. The stored energy density $\mathsf{\Psi}$ of the rod depends on the deformation $\mathsf{y}:\Omega^h\to\R^3$ and on an internal variable $\mathsf{z}:\Omega^h\to\R^{3\times 3}$ depicting the linearized plastic strain:
\begin{equation*}
	\mathsf{\Psi}\big(x, \mathsf{y}, \mathsf{z}\big) = \mathsf{W}_\el\big(\mathsf{F}_\el(x)\big) + \mathsf{H}^\mathsf{C}_\el\big(\grad \sqrt{\mathsf{C_y}}(x)\big) + \mathsf{H}^\mathsf{R}_\el\big(\grad \mathsf{R_y}(x)\big) + \mathsf{W}_\pl\big(\mathsf{F}_\pl(x)\big),
\end{equation*}
where $F_\el$ and $F_\pl$ denote the elastic and plastic part of the strain and are defined, following \cite{Lee69}, via the multiplicative decomposition
\begin{equation*}
	\grad\mathsf{y}(x) = \mathsf{F}_\el(x)\mathsf{F}_\pl(x), \qquad
	\text{where } \mathsf{F}_\pl(x) := I + h \mathsf{z}(x).
\end{equation*}
Moreover, $\sqrt{\mathsf{C_y}(x)} \in \R^{3\times 3}_{\sym}$ and $\mathsf{R_y}(x) \in \SO(3)$ are obtained from $\mathsf{y}$ by the polar decomposition,
\begin{equation} \label{Eq:PolarDecomposition}
	\grad\mathsf{y}(x) = \mathsf{R_y}(x)\sqrt{\mathsf{C_y}(x)}, \qquad
	\text{where } \mathsf{C_y}(x) := \grad \mathsf{y}(x)^T\grad \mathsf{y}(x).
\end{equation}
The model is based on a decomposition of the energy into an elastic part and a hardening part. For the elastic part, we consider a standard elastic energy density $\mathsf{W}_\el$ which is frame-indifferent, minimized at the set of rotations and non-degenerated, see \cref{Ass:ElasticMaterialLaw} for the precise assumptions. The elastic energy is augmented by two strain-gradient terms $\mathsf{H}^\mathsf{C}_\el$ and $\mathsf{H}^\mathsf{R}_\el$ which act on the components of the polar decomposition of $\grad\mathsf{y}$ individually.
We consider these strain-gradient terms to achieve additional regularity required for handling the geometric and physical nonlinearities. However, thanks to the split of the strain-gradient terms into $\mathsf{H}^\mathsf{C}_\el$ and $\mathsf{H}^\mathsf{R}_\el$ and since we consider different scalings of these terms, the contribution of the strain-gradient terms vanishes in the limit, see \cref{Rem:StrainGradientTerms}. Although we introduce the strain-gradient terms mainly for technical reasons, the concept of strain-gradients originates in models for nonsimple materials (cf.\ \cite[Sect.~2.5]{KR19}) and has a long history that goes back to Toupin \cite{Tou62,Tou64} and Mindlin \cite{Min64}. In particular, already \cite{Min64} considers strain-gradients in the form of $\mathsf{H}^\mathsf{C}_\el$ and $\mathsf{H}^\mathsf{R}_\el$. The physical relevance and origin of nonsimple material models is still a point of discussion among experts, see e.g. \cite{BS24} for a recent contribution that justifies strain-gradient elasticity from microscopic effects in the context of phase-field crystal models. Nevertheless, strain-gradients are used in many recent variational studies of nonlinear material models, e.g.\ \cite{FK20,DKPS21,BFK23,MR20,OL24}.
\medskip

It is standard practice for thin structures to scale the reference domain to unit thickness. Therefore, we consider $\Omega := \Omega^1$ and introduce the rescaled variables $y(x_1, \bar{x}) = \mathsf{y}(x_1, h\bar{x})$ and $z(x_1,\bar{x}) := \mathsf{z}(x_1, h\bar{x})$, $x = (x_1,\bar{x}) \in \Omega$. The scaling to unit thickness yields a transformation of the gradient to $\grad[h] := \compose{\partial_1}{\frac{1}{h}\bar{\grad}}$, where $\bar{\grad} := (\partial_2, \partial_3)$ denotes the normal gradient and $\partial_1$ the tangential gradient. Since $R_y$ is not well-defined, if $\det \grad[h] y = 0$, we introduce the set of admissible deformations,
\begin{equation} \label{Eq:AdmissibleDeformations}
	\mathcal{A}^h_\mathrm{def} := \set*{y \in \SobW^{2,p}(\Omega, \R^3) \given \det \grad[h] y > 0 \text{ a.e.}}.
\end{equation} 
By combining the considerations above and suitable scaling of the terms, we are let to the following energies. In the total energy we include a time-dependent loading term $l(t): \Omega \to \R^3$, which drives the evolution.
\begin{subequations} \label{Eq:Definition3DEnergy}
\begin{align}
	\mathcal{E}_\el^h(y, z) &:= \begin{cases}
		\begin{aligned}[b] 
			h^{-2} \int_\Omega W_\el\big(\grad[h]y(x)(I + h z(x))^{-1}\big) \,\dd x& \\
			+ h^{-\alpha_C p} \int_{\Omega} H^C_\el\big(\grad \sqrt{C_y}(x)\big) \,\dd x& \\
			+ h^{\alpha_R p} \int_{\Omega} H^R_\el\big(\grad R_y(x)\big) \,\dd x&,
		\end{aligned} & \text{if } y \in \mathcal{A}^h_\mathrm{def}, \\
		\infty & \text{else},
	\end{cases} \\
	\mathcal{E}_\pl^h(z) &:= h^{-2} \int_\Omega W_\pl\big(I + h z(x)\big) \,\dd x, \\
	\mathcal{E}^h_{\el+\pl}(y,z) &:= \mathcal{E}_\el^h(y,z) + \mathcal{E}_\pl^h(z), \\
	\mathcal{E}^h(t,y,z) &:= \mathcal{E}_{\el+\pl}^h(y,z) - \int_\Omega l(t,x) \cdot y(x) \,\dd x, \label{Eq:DefinitionEnergy}
\end{align}
\end{subequations}
where $\alpha_C, \alpha_R > 0$ with $\alpha_R < \frac{2}{3}(1 - \alpha_C)$ and $p > 3$. Note that naturally also $\grad[h] \sqrt{C_y}$ and $\grad[h]R_y$ would appear in the energy. Thus, we implicitly consider different scaling for the tangential and normal components in these terms. A motivation for this scaling is presented in \cref{Rem:StrainGradientTerms}. The scaling $h^{-2}$ of the elastic energy and hardening energy implies that in the limit $h \to 0$ we obtain a bending theory.
\medskip

Additionally to the free energy, the dissipation of the plastic strain is relevant for the dynamics. As in \cite{MRS08,MS13,Dav14_02} we model the dissipation by the \emph{dissipation distance},
\begin{equation} \label{Eq:DefinitionDissipation}
	\mathcal{D}^h(z, \hat{z}) := h^{-1}\int_{\Omega} \mathscr{D}_\pl\big(I + hz(x), I + h\hat{z}(x)\big) \,\dd x.
\end{equation}
The dissipation distance describes the minimal amount of energy that is dissipated by passing from a state $z$ of the plastic strain to $\hat{z}$. For ERIS, the dissipation should not depend on the rate of change of the plastic strain but just on the path taken. Here, this is achieved by relating the dissipation distance density $\mathscr{D}_\pl$ to a positively 1-homogeneous map $\mathscr{R}_\pl$ as described later in \eqref{Eq:DefinitionDissipationDistance}.
An appropriate weak solution concept for ERIS composed of the energy and dissipation functionals $\mathcal{E}^h$ and $\mathcal{D}^h$ and a suitable state space $\mathcal{Q}^h$ has been proven to be the concept of \emph{energetic solutions}. The state space is here given as
\begin{equation} \label{Eq:DefinitionStateSpace}
	\mathcal{Q}^h := \SobW^{2,p}(\Omega, \R^3) \times \Leb^2(\Omega, \R^{3 \times 3}).
\end{equation}

\begin{definition}[Energetic solutions, {cf.\ \cite{MT04}}] \label{Def:EnergeticSolution}
Consider an ERIS given by $\mathcal{M} := (\mathcal{Q}, \mathcal{E}, \mathcal{D})$, where $\mathcal{Q} = \mathcal{Y} \times \mathcal{Z}$ is a set, $\mathcal{E}: [0,T] \times \mathcal{Q} \to (-\infty, \infty]$ and $\mathcal{D}: \mathcal{Z} \times \mathcal{Z} \to [0,\infty]$. We denote the \emph{set of stable states} by
\begin{equation}
	S_\mathcal{M}(t) := \set*{q=(y,z) \in \mathcal{Q} \given \mathcal{E}(t,q) < \infty, \mathcal{E}(t, q) \leq \mathcal{E}(t, \hat{q}) + \mathcal{D}(z,\hat{z}) \text{ for all } \hat{q}=(\hat{y},\hat{z}) \in \mathcal{Q}}.
\end{equation}
Moreover, we define the dissipation of a trajectory $z: [s,t] \to \mathcal{Z}$, $0 \leq s < t \leq T$ by
\begin{equation}
	\operatorname{Diss}_\mathcal{M}(z; [s,t]) := \sup \set*{\sum_{i=1}^N \mathcal{D}\big(z(t_{i-1}), z(t_i)\big) \given s=t_0 < \dots < t_N = t, N \in \N}.
\end{equation}
An \emph{energetic solution} to the ERIS is a trajectory $q=(y,z): [0,T] \to \mathcal{Q}$, such that
\begin{enumerate}[(a)]
	\item (Global stability). $q(t)$ is a stable state for any $t \in [0,T]$, i.e.
	\begin{equation} \label{Eq:GlobalStability}
		q(t) \in S_\mathcal{M}(t).
	\end{equation}
	\item (Global Energy balance). $[0,T] \ni t \mapsto \partial_t \mathcal{E}(t,q(t))$ is well-defined and integrable and for any $t \in [0,T]$,
	\begin{equation} \label{Eq:GlobalEnergyBalance}
		\mathcal{E}(t, q(t)) + \operatorname{Diss}_\mathcal{M}(z; [0,t]) = \mathcal{E}(0, q(0)) + \int_0^t \partial_s \mathcal{E}(s,q(s)) \,\dd s.
	\end{equation}
\end{enumerate}
\end{definition}

\begin{remark}
Note that in our case, since only the loading term is time-dependent, we have for any $q=(y,z) \in \mathcal{Q}^h$ with $\mathcal{E}^h_{\el+\pl}(q) < \infty$,
\begin{equation}
	\partial_t \mathcal{E}^h(t,q) = -\int_\Omega \partial_t l(t,x) \cdot y(x) \,\dd x. 
\end{equation}
\end{remark}

In this paper we study the convergence of energetic solutions to the ERIS $(\mathcal{Q}^h, \mathcal{E}^h, \mathcal{D}^h)$. We consider one-sided boundary conditions determined by $(v_\mathrm{bc}, R_\mathrm{bc}) \in \R^3 \times \SO(3)$. Such boundary conditions have already been used in the static case, cf.\ \cite{Neu12,BGKN22-PP}. We introduce these boundary conditions by restricting the state space to
\begin{equation} \label{Eq:DefinitionStateSpaceBC3D}
	\mathcal{Q}^h_{(v_\mathrm{bc}, R_\mathrm{bc})} := \set*{(y,z) \in \mathcal{Q}^h \given y(0, \bar{x}) = v_\mathrm{bc} + h x_2 R_\mathrm{bc}e_2 + hx_3 R_\mathrm{bc}e_3 \text{ for a.e.\ } \bar{x} \in S},
\end{equation}
Our main result is the following.

\begin{theorem} \label{Thm:MainResultConvergenceOfSolutions}
Consider \cref{Ass:ElasticMaterialLaw,Ass:PlasticMaterialLaw}. Let $(v_\mathrm{bc}, R_\mathrm{bc}) \in \R^3 \times \SO(3)$ and $(y^h, z^h): [0,T] \to \mathcal{Q}^h_{(v_\mathrm{bc}, R_\mathrm{bc})}$ be energetic solutions to the ERIS $(\mathcal{Q}^h_{(v_\mathrm{bc}, R_\mathrm{bc})}, \mathcal{E}^h, \mathcal{D}^h)$. Then, up to a subsequence we have for all $t \in [0,T]$, 
\begin{subequations} \label{Eq:ConvergenceOfEnergeticSolutionsBC}
\begin{align}
	z^h(t) &\wto z(t) && \text{in } \Leb^2(\Omega, \R^{3\times 3}), \\
\intertext{and up to a further $t$-dependent subsequence,}
	y^h(t) &\to v(t) && \text{in } \SobH^1(\Omega, \R^3), \\
	\grad[h]y^h(t) &\to R(t) && \text{in } \Leb^2(\Omega, \R^{3\times 3}),
\end{align}
\end{subequations}
where $(v,R,z): [0,T] \to \mathcal{Q}^0_{(v_\mathrm{bc}, R_\mathrm{bc})}$ is an energetic solution to the ERIS $(\mathcal{Q}^0_{(v_\mathrm{bc}, R_\mathrm{bc})}, \mathcal{E}^0, \mathcal{D}^0)$, see below. (For the proof see \cref{Sec:ConvergenceOfSolutions}.)
\end{theorem}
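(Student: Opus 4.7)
The plan is to apply the abstract machinery of evolutionary $\Gamma$-convergence for rate-independent systems from \cite{MRS08}, used in a similar spirit in \cite{MS13,Dav14_02}. Concretely, I would verify: (i) uniform a priori bounds giving the subsequential compactness asserted in \eqref{Eq:ConvergenceOfEnergeticSolutionsBC}; (ii) $\Gamma$-liminf inequalities for both $\mathcal{E}^h$ and the total dissipation $\operatorname{Diss}_{\mathcal{M}^h}$; (iii) a mutual recovery sequence compatible with the joint stability condition; and (iv) convergence of the power $\partial_t\mathcal{E}^h(t,q^h(t))$ and of the initial data. These ingredients then allow one to pass to the limit in the global stability \eqref{Eq:GlobalStability} and the energy balance \eqref{Eq:GlobalEnergyBalance}, identifying $(v,R,z)$ as an energetic solution of $(\mathcal{Q}^0_{(v_\mathrm{bc},R_\mathrm{bc})},\mathcal{E}^0,\mathcal{D}^0)$.

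\textbf{Compactness and lower bounds.} A Gronwall argument on the energy balance, combined with smoothness of $l$, yields uniform bounds on $\mathcal{E}^h_{\el+\pl}(y^h(t),z^h(t))$ and on $\operatorname{Diss}_{\mathcal{M}^h}(z^h;[0,T])$. The bound on $\mathcal{E}^h_\pl$ together with the quadratic growth of $W_\pl$ at $I$ gives weak $\Leb^2$-compactness of $z^h(t)$; Helly's selection principle in the weak $\Leb^2$-topology then furnishes pointwise-in-$t$ weak convergence up to a subsequence. For the deformation, the scaled Friesecke--James--M\"uller rigidity estimate, augmented by the strain-gradient term $h^{\alpha_R p}H^R_\el(\grad R_y)$ (which provides $\SobW^{1,p}$-control of the rotation since $p>3$), yields strong $\Leb^2$-convergence of $\grad[h]y^h(t)$ to a rotation $R(t)\in\SO(3)$ depending only on $x_1$; Poincar\'e's inequality combined with the boundary condition at $x_1=0$ then upgrades this to $\SobH^1$-convergence $y^h(t)\to v(t)$ with $\partial_1 v=Re_1$. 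The $\Gamma$-liminf for the elastic energy follows from a Taylor expansion of $W_\el$ around $\SO(3)$ and the multiplicative identity $\grad[h]y^h(I+hz^h)^{-1}=R_{y^h}\bigl(I+hE^h+\oclass(h)\bigr)$, which produces a quadratic limit in the bending-torsion strain of $R$ relativized by $z$; the strain-gradient contributions decouple by the scaling choices (cf.\ \cref{Rem:StrainGradientTerms}), and the liminf for $\operatorname{Diss}_{\mathcal{M}^h}$ follows from convexity and positive $1$-homogeneity of $\mathscr{R}_\pl$.

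\textbf{Mutual recovery sequence -- the main obstacle.} Given $q^h(t)\to q(t)=(v,R,z)$ and any competitor $\hat q=(\hat v,\hat R,\hat z)\in\mathcal{Q}^0_{(v_\mathrm{bc},R_\mathrm{bc})}$, I would construct $\hat q^h=(\hat y^h,\hat z^h)$ via a Cosserat-type ansatz
\[
	\hat y^h(x):=\hat v(x_1)+h\hat R(x_1)(x_2 e_2+x_3 e_3)+h^2\beta^h(x),
\]
coupled with a plastic strain $\hat z^h$ defined through the multiplicative update $(I+h\hat z^h)(I+hz^h)^{-1}=I+h(\hat z-z)+\oclass(h)$ in $\Leb^2$. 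The corrector $\beta^h$ is chosen to enforce the boundary condition on $\mathcal{Q}^h_{(v_\mathrm{bc},R_\mathrm{bc})}$, absorb the relaxation of the quadratic limit, and keep $\hat y^h\in\mathcal{A}^h_\mathrm{def}$ (guaranteed for small $h$ since $\det\hat R=1$). This step is the main obstacle: in contrast to the additive ansatz of \cite{MS13} in the small-strain regime, here the macroscopic rotation $\hat R$ is $\Oclass(1)$ and must enter multiplicatively, while the plastic update is built from the multiplicative relation $F_\pl\mapsto F_\pl(I+h(\hat z-z))$. Simultaneously, one must control $h^{-\alpha_C p}H^C_\el(\grad\sqrt{C_{\hat y^h}})+h^{\alpha_R p}H^R_\el(\grad R_{\hat y^h})$ on the ansatz; matching these scalings to that of the Cosserat corrector is precisely what forces the condition $\alpha_R<\tfrac{2}{3}(1-\alpha_C)$ present in the model. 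Positive $1$-homogeneity of $\mathscr{R}_\pl$ is then exploited crucially to obtain $\mathcal{D}^h(z^h,\hat z^h)\to\mathcal{D}^0(z,\hat z)$ despite only weak convergence of $z^h$.

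\textbf{Passage to the limit.} The $\SobH^1$-convergence $y^h(t)\to v(t)$ together with smoothness of $l$ gives pointwise convergence and a uniform bound for $\partial_t\mathcal{E}^h(t,q^h(t))=-\int_\Omega\partial_t l(t,x)\cdot y^h(t,x)\,\dd x$, hence convergence of the power by dominated convergence on $[0,T]$. Inserting the mutual recovery sequence into the stability inequality at each $t$ yields $q(t)\in S_{\mathcal{M}^0}(t)$; combining the $\Gamma$-liminf inequalities with the matched upper bound produced by testing against recovery sequences yields the global energy balance for $(v,R,z)$, completing the identification as an energetic solution.
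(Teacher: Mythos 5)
Your overall architecture (compactness, liminf inequalities, mutual recovery sequence, convergence of the power, then passage to the limit in stability and energy balance via the abstract theory of \cite{MRS08,MS13,Dav14_02}) is exactly the route the paper takes. However, your construction of the mutual recovery sequence has a genuine gap, and it concerns precisely the point that is the main novelty here. Your ansatz $\hat y^h(x)=\hat v(x_1)+h\hat R(x_1)\mathbf{\bar{x}}(x)+h^2\beta^h(x)$ is built only from the limiting competitor $(\hat v,\hat R)$; it does not depend on the given sequence $y^h$. The inequality \eqref{Eq:MututalLimsupInequality} involves the \emph{difference} $\mathcal{E}^h_{\el+\pl}(\hat y^h,\hat z^h)-\mathcal{E}^h_{\el+\pl}(y^h,z^h)$, and for $(y^h,z^h)$ one only has weak convergence of the strains $G^h$ (resp.\ $E^h$) and of $z^h$, hence only a liminf inequality for the subtracted energy, possibly with a strictly positive excess coming from oscillations. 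Writing $Q_\el(\hat G^h-\hat z^h)-Q_\el(G^h-z^h)=(\hat G^h-G^h-\hat z^h+z^h):\mathbb{C}_\el(\hat G^h+G^h-\hat z^h-z^h)$, one sees that the limit can only be identified if the \emph{differences} $\hat G^h-G^h$ and $\hat z^h-z^h$ converge strongly. Your multiplicative update achieves this for the plastic variable, but with your deterministic $\hat y^h$ one has $\hat G^h\to\hat G$ strongly while $G^h\wto G$ only weakly, so $\hat G^h-G^h$ does not converge strongly and the excess terms do not cancel (e.g.\ if $G^h-z^h$ happens to converge strongly while $z^h$ oscillates, the left-hand side of \eqref{Eq:MututalLimsupInequality} picks up a strictly positive oscillation term absent on the right). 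This is why the paper insists that $\hat v^h$, $\hat R^h$ and the corrector must be chosen \emph{depending on} $y^h$: in \cref{Prop:ConstructionRecoverySequenceDeformation} one takes $\hat R^h=\tilde R^h R^h$ with $\tilde R=\hat R R^T$ smoothened and $R^h$ the rod rotation attached to $y^h$ by rigidity, and transplants the corrector $\phi^h$ from the Cosserat representation of $y^h$ (\cref{Prop:Compactness}) into $\hat y^h$ via the term $h\tilde R^h(\phi^h-\phi^h_*)$; only then does $\hat G^h-G^h\to\hat G-G$ strongly, and only then can the differences of the strain-gradient terms be handled by the Lipschitz bounds \eqref{Ass:HE_Continuity} (note that the strain-gradient energies of $y^h$ are merely bounded after scaling, not vanishing, so they too must be compared, not estimated separately). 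The condition $\alpha_R<\frac23(1-\alpha_C)$ enters exactly here, to control $h^{1-\alpha_C-\alpha_R}\norm{\phi^h}_{\Leb^\infty}$, an object that does not even appear in your ansatz.

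Two further, more technical points. First, your claim that the term $h^{\alpha_Rp}H^R_\el(\grad R_y)$ provides $\SobW^{1,p}$-control of the rotation and hence compactness is not accurate: this term is degenerate (it only yields $h^{\alpha_R}\norm{\grad R_{y^h}}_{\Leb^p}\lesssim 1$); the convergences $\grad[h]y^h(t)\to R(t)$ come from the FJM rigidity estimate as in \cite{MM03,Neu12}, while the strain-gradient terms are used solely for the refined ($h$-weighted $\Leb^\infty$/$\Leb^p$) estimates needed in the recovery construction. Second, your dominated-convergence argument for the power $\partial_t\mathcal{E}^h(t,q^h(t))=-\int_\Omega\partial_t l\cdot y^h(t)$ does not go through as stated, because $y^h(t)\to v(t)$ is only available along $t$-dependent subsequences (there is no temporal regularity of the non-dissipative variable); the paper instead extracts a weak $\Leb^1([0,T])$ limit of the power, compares it with the pointwise liminf via Fatou, and upgrades to a.e.\ and $\Leb^1$ convergence only a posteriori (after the energy balance closes) using \cref{Lem:PointwiseConvergenceFromWeakConvergence}. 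This step needs that care, otherwise the upper energy estimate cannot be integrated along a single subsequence.
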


The limiting system is a bending model for rods, cf.\ \cite{MM03,Neu12,BNS20,BGKN22-PP} combined with linearized plasticity \cite{MS13,Dav14_02}. Limiting deformations consist of bending and twisting of the rod, which is described by the following set of possible rod configurations:
\begin{equation}
	\mathcal{A}_\mathrm{rod} := \set*{(v,R) \in \SobH^2(\omega, \R^3) \times \SobH^1(\omega, \R^{3\times 3}) \given R \in \SO(3) \text{ a.e.\ and } \partial_1 v = R e_1}.
\end{equation}
Combined with the linearized plastic strain $z$, this leads to the state spaces (without and with boundary conditions, respectively),
\begin{subequations}
\begin{align}
	\mathcal{Q}^0 &:= \mathcal{A}_\mathrm{rod} \times \Leb^2(\Omega, \R^{3\times 3}), \\
	\mathcal{Q}^0_{(v_\mathrm{bc}, R_\mathrm{bc})} &:= \set*{(v,R,z) \in \mathcal{Q}^0 \given v(0)=v_\mathrm{bc}, R(0)=R_\mathrm{bc}}.
\end{align}
\end{subequations}
The limiting energy and dissipation are defined as follows:
\begin{subequations} \label{Eq:LimitingModel}
\begin{align}
	\mathcal{E}^0_\el(v,R,z) &:= \int_\omega Q_\el^\mathrm{eff}\big(R^T\partial_1 R - K^\mathrm{eff}(z)\big) + \int_\Omega Q_\el\big(z^\mathrm{res}\big), \label{Eq:LimitingElasticEnergy}\\
	\mathcal{E}^0_\pl(z) &:= \int_\Omega Q_\pl(z), \\
	\mathcal{E}^0_{\el+\pl}(v,R,z) &:= \mathcal{E}^0_\el(v,R,z) + \mathcal{E}^0_\pl(z), \\
	\mathcal{E}^0(t,v,R,z) &:= \mathcal{E}^0(v,R,z) - \int_{\omega} l^\mathrm{eff}(t,x_1) \cdot v(x_1) \,\dd x_1, \\
	\mathcal{D}^0(z, \hat{z}) &:= \int_\Omega \mathscr{R}_\pl\of*{\hat{z} - z}.
\end{align}
\end{subequations}
The limiting energy admits the following effective quantities:
\begin{itemize}
	\item $Q_\el^\mathrm{eff}: \R^{3\times 3}_{\skewMat} \to \R$, a non-degenerate quadratic form which describes the bending-torsion energy of the rod,
	\item $K^\mathrm{eff}: \Leb^2(\Omega, \R^{3\times 3}) \to \Leb^2(\omega, \R^{3\times 3}_{\skewMat})$, the effective contribution of the plastic strain to the bending and torsion of the rod,
	\item $z^\mathrm{res} \in \Leb^2(\Omega, \R^{3\times 3}_{\sym})$, a residual plastic strain that leads to an energy which cannot be accommodated by bending and torsion of the rod,
	\item $l^\mathrm{eff}(t,x_1) := \int_S l(t,x_1, \bar{x}) \,\dd \bar{x}$, the effective loading.
\end{itemize}
The definition of these effective quantities is due to an orthogonal projection technique established in \cite{BNS20,BGKN22-PP}. See \cref{Sec:RepresentationFormulas} for more information and especially \cref{Def:EffectiveCoefficients} for the definition of the quantities.

\begin{remark} \label{Rem:UniqueSolution}
The 3D and limiting ERIS are non-convex and thus, solutions are in general non-unique. A prominent example for this observation is the buckling of a symmetric rod. Therefore, generally solutions might jump at any time. Consequently, we cannot expect temporal regularity of the non-dissipative variable $y$ and convergence of solutions cannot be expected without extraction of a time-dependent subsequence.
On the other hand, if the solution to the limiting ERIS is unique, we observe in \cref{Thm:MainResultConvergenceOfSolutions} convergence of the whole sequence $(y^h(t),\grad[h]y^h(t),z^h(t))$ by a standard procedure, see \cref{Rem:ConvergenceSubsequences} for more details. In \cref{Sec:Example} we study an example where uniqueness of the solution can be expected.
\end{remark}

It is important to note that we do not require nor prove existence of solutions to the 3D model. In fact, in \cref{Thm:MainResultConvergenceOfSolutions} we implicitly assume that solutions exist. However, existence of solutions to the finite system is generally unknown. 
Nevertheless, we can always obtain approximate solutions to a related time-incremental problem. Following \cite{MS13,Dav14_02} we prove that these approximate solutions also convergence to solutions to the limiting ERIS:

\begin{theorem} \label{Thm:MainResultConvergenceOfApproximateSolutions}
Consider \cref{Ass:ElasticMaterialLaw,Ass:PlasticMaterialLaw}. Let $(v_\mathrm{bc}, R_\mathrm{bc}) \in \R^3 \times \SO(3)$, $0 = t_0^h < \cdots < t_{N^h}^h = T$ partitions with $\lim_{h \to 0}\max_{i \leq N^h}(t_i^h - t_{i-1}^h) = 0$ and $\kappa: (0,\infty) \to (0,\infty)$ with $\lim_{h \to 0}\kappa(h) = 0$. Consider $(y^h_i, z^h_i) \in \mathcal{Q}^h_{(v_\mathrm{bc}, R_\mathrm{bc})}$, such that for $i=1,\dots,N^h$,
\begin{equation} \label{Eq:DiscreteSolutionScheme}
	\mathcal{E}^h(t_i^h, y^h_i, z^h_i) +  \mathcal{D}^h(z^h_{i-1}, z^h_i) \leq \kappa(h)(t^h_i - t^h_{i-1}) + \inf_{(y,z) \in \mathcal{Q}^h_{\mathrlap{(v_\mathrm{bc}, R_\mathrm{bc})}}} \left(\mathcal{E}^h(t_i^h, y, z) +  \mathcal{D}^h(z^h_{i-1}, z)\right).
\end{equation}
Let $(y^h, z^h): [0,T] \to \mathcal{Q}^h_{(v_\mathrm{bc}, R_\mathrm{bc})}$ be the associated right-continuous, piece-wise constant interpolants. Then, up to a subsequence, the convergences \eqref{Eq:ConvergenceOfEnergeticSolutionsBC} hold with $(v,R,z): [0,T] \to \mathcal{Q}^0_{(v_\mathrm{bc}, R_\mathrm{bc})}$ being an energetic solution to the ERIS $(\mathcal{Q}^0_{(v_\mathrm{bc}, R_\mathrm{bc})}, \mathcal{E}^0, \mathcal{D}^0)$. (For the proof see \cref{Sec:ConvergenceOfSolutions}.)
\end{theorem}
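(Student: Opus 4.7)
The approach is to reduce \cref{Thm:MainResultConvergenceOfApproximateSolutions} to the machinery already developed for \cref{Thm:MainResultConvergenceOfSolutions}, by extracting a discrete approximate stability and a discrete approximate energy inequality from the incremental scheme \eqref{Eq:DiscreteSolutionScheme}, and then jointly passing to the limit $h\to 0$ together with the limit of vanishing time-step.

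First I would exploit the approximate minimality of $(y^h_i, z^h_i)$. Comparing with an admissible competitor $(\hat y, \hat z)\in\mathcal{Q}^h_{(v_\mathrm{bc}, R_\mathrm{bc})}$ and invoking the triangle inequality for $\mathcal{D}^h$ (which follows from the $1$-homogeneity of $\mathscr{R}_\pl$ encoded in \eqref{Eq:DefinitionDissipation}), one obtains the discrete stability inequality with error
\begin{equation*}
  \mathcal{E}^h(t_i^h, y^h_i, z^h_i)\;\leq\;\mathcal{E}^h(t_i^h, \hat y, \hat z) + \mathcal{D}^h(z^h_i, \hat z) + \kappa(h)(t^h_i - t^h_{i-1}).
\end{equation*}
Testing \eqref{Eq:DiscreteSolutionScheme} against $(\hat y, \hat z) = (y^h_{i-1}, z^h_{i-1})$ and summing over $i$, combined with the fundamental theorem of calculus for the linear loading term, yields the discrete two-sided energy estimate
\begin{equation*}
  \mathcal{E}^h(t, y^h(t), z^h(t)) + \operatorname{Diss}(z^h; [0,t])\;\leq\;\mathcal{E}^h(0, y^h(0), z^h(0)) + \int_0^t \partial_s \mathcal{E}^h(s, y^h(s), z^h(s))\,\dd s + \kappa(h)\, T.
\end{equation*}
Since $\partial_t\mathcal{E}^h$ depends only linearly on $y$ through the loading, a Gronwall argument produces uniform bounds on $\mathcal{E}^h_{\el+\pl}(t, y^h(t), z^h(t))$ and on $\operatorname{Diss}(z^h;[0,T])$.

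Second, the uniform $\mathcal{D}^h$-variation bound lets a Helly-type selection principle extract a pointwise (in $t$) weak-$\Leb^2$ limit $z$ of $z^h$, while the geometric-rigidity-based compactness (already used in the proof of \cref{Thm:MainResultConvergenceOfSolutions}) delivers, along a further $t$-dependent subsequence, the convergences $y^h(t)\to v(t)$ in $\SobH^1$ and $\grad[h] y^h(t)\to R(t)$ in $\Leb^2$. To pass to the limit in the discrete stability and the discrete energy inequality I would invoke the mutual recovery sequence constructed for \cref{Thm:MainResultConvergenceOfSolutions}: the liminf inequalities for $\mathcal{E}^h$ and $\mathcal{D}^h$ combined with the recovery sequence convert the approximate discrete stability into global stability of $(v,R,z)$, and the matching upper energy bound from the recovery sequence, together with the liminf on the dissipation and continuity of the loading work, yields the limiting energy balance. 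The additive error $\kappa(h)\,T\to 0$ is harmless.

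The main obstacle is the one already overcome for \cref{Thm:MainResultConvergenceOfSolutions}: the construction of a mutual recovery sequence compatible with the bending regime and the multiplicative plastic decomposition $\grad y = F_\el(I+hz)$. The specific additional point here is to ensure that the $\kappa(h)(t^h_i-t^h_{i-1})$ slack in the discrete stability and its telescoping version do not spoil the $\Gamma$-convergence step; since the errors sum to $\kappa(h)\,T$ and each individual term is absorbed into the recovery-sequence perturbation, this is a routine book-keeping. Right-continuity of the piecewise-constant interpolants fits naturally with the lower semicontinuity arguments, so no extra work is required at the jump times, and the conclusion \eqref{Eq:ConvergenceOfEnergeticSolutionsBC} follows with the same extraction of subsequences as in \cref{Thm:MainResultConvergenceOfSolutions}.
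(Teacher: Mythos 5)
Your proposal follows essentially the same route as the paper: convert the incremental scheme into approximate stability (via the triangle inequality for $\mathcal{D}^h$) and an approximate energy inequality for the right-continuous interpolants, obtain uniform bounds from coercivity plus Gronwall, extract limits via a Helly-type selection for $z^h$ and the rigidity-based compactness for $y^h$ along $t$-dependent subsequences, and pass to the limit in stability and energy balance using the lower bounds and the mutual recovery sequence, with the upper energy estimate recovered from stability of the limit by the standard argument. The only ingredient the paper makes explicit that you leave implicit is the well-preparedness of the initial data (approximate stability at $t=0$ and convergence of the initial energies), which is needed so that the interpolant is approximately stable on the first subinterval $[0,t_1^h)$ and the energy balance closes at $t=0$.
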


Especially, we obtain existence of solutions to the limiting model as an immediate consequence.
To obtain these results, we utilize the general theory of evolutionary $\Gamma$-convergence for energetic solutions to ERIS, which was introduced in \cite{MRS08}. Similar to the theory of regular $\Gamma$-convergence for minimization problems, the procedure consists of proving certain properties of the energy and dissipation functionals. We summarize the most important properties in the following theorem.

\begin{theorem} \label{Thm:EvolutionaryGammaConvergence}
Consider \cref{Ass:ElasticMaterialLaw,Ass:PlasticMaterialLaw}. Let $(v_\mathrm{bc}, R_\mathrm{bc}) \in \R^3 \times \SO(3)$ and $(y^h, z^h) \subset \mathcal{Q}^h$. Then, the following statements hold. Moreover, the statements remain true, if $(y^h, z^h) \subset \mathcal{Q}^h_{(v_\mathrm{bc}, R_\mathrm{bc})}$ and we replace $\mathcal{Q}^h$ by $\mathcal{Q}^h_{(v_\mathrm{bc}, R_\mathrm{bc})}$ and $\mathcal{Q}^0$ by $\mathcal{Q}^0_{(v_\mathrm{bc}, R_\mathrm{bc})}$. In this case we can also replace \eqref{Eq:ConvergenceOfEnergeticSolutions} below with \eqref{Eq:ConvergenceOfEnergeticSolutionsBC}.
\begin{enumerate}[(a)]
	\item (Compactness): For some constant $c>0$ (independent of $h$, $y^h$ and $z^h$), we find
	\begin{equation} \label{Eq:Coercivity}
		\norm*{y^h - {\textstyle\fint_S} y^h(0,\placeholder)}_{\Leb^2(\Omega)}^2 + \norm*{\grad[h]y^h}_{\Leb^2(\Omega)}^2 + \norm*{z^h}_{\Leb^2(\Omega)}^2 \leq c\left(1 + \mathcal{E}^h_{\el+\pl}(y^h,z^h)\right).
	\end{equation}
	Moreover, if
	\begin{equation} \label{Eq:BoundedEnergy}
		\limsup_{h \to 0}\, \mathcal{E}^h_{\el+\pl}(y^h,z^h) < \infty.
	\end{equation}
	we find $(v,R,z) \in \mathcal{Q}^0$, such that up to a subsequence (not relabeled),
	\begin{subequations} \label{Eq:ConvergenceOfEnergeticSolutions}
	\begin{align}
		y^h - {\textstyle\fint_S} y^h(0,\placeholder) &\to v - v_\mathrm{bc} && \text{in } \SobH^1(\Omega, \R^3), \\
		\grad[h]y^h &\to R && \text{in } \Leb^2(\Omega, \R^{3\times 3}), \\
		z^h &\wto z && \text{in } \Leb^2(\Omega, \R^{3\times 3}).
	\end{align}
	\end{subequations}
	(For the proof see \cref{Sec:Compactness}.)
\end{enumerate}
Additionally assume \eqref{Eq:BoundedEnergy} and that the convergences \eqref{Eq:ConvergenceOfEnergeticSolutions} hold for some $(v,R,z) \in \mathcal{Q}^0$.
\begin{enumerate}[(a)] \setcounter{enumi}{1}
	\item (Lower bound): Let $\hat{z}^h, \hat{z} \in \Leb^2(\Omega, \R^{3\times 3})$ with $\hat{z}^h \wto \hat{z}$ in $\Leb^2(\Omega, \R^{3\times 3})$. Then,
	\begin{subequations} \label{Eq:LiminfInequalities}
	\begin{align}
		\liminf_{h \to 0} \mathcal{E}^h_{\el+\pl}(y^h,z^h) &\geq \mathcal{E}^0_{\el+\pl}(v,R,z), \\
		\liminf_{h \to 0} \mathcal{D}^h(z^h, \hat{z}^h) &\geq \mathcal{D}^0(z,\hat{z}).
	\end{align}
	\end{subequations}
	(For the proof see \cref{Sec:LowerBound}.)
	
	\item (Mutual recovery sequence): Let $(\hat{v}, \hat{R}, \hat{z}) \in \mathcal{Q}^0$. Then, for each subsequence of $(h)$ we find a further subsequence (not relabeled) and a sequence $(\hat{y}^h, \hat{z}^h) \subset \mathcal{Q}^h$ satisfying the convergences \eqref{Eq:ConvergenceOfEnergeticSolutions} (with $(y^h, z^h)$ and $(v,R,z)$ replaced by $(\hat{y}^h, \hat{z}^h)$ and $(\hat{v}, \hat{R}, \hat{z})$), such that
	\begin{multline} \label{Eq:MututalLimsupInequality}
		\limsup_{h \to 0} \Big(\mathcal{E}_{\el+\pl}^h(\hat{y}^h, \hat{z}^h) + \mathcal{D}^h(z^h, \hat{z}^h) - \mathcal{E}_{\el+\pl}^h(y^h, z^h)\Big) \\
		\leq \mathcal{E}_{\el+\pl}^0(\hat{v}, \hat{R}, \hat{z}) + \mathcal{D}^0(z, \hat{z}) - \mathcal{E}_{\el+\pl}^0(v, R, z).
	\end{multline}
	(For the proof see \cref{Sec:RecoverySequence}.)	
\end{enumerate}
\end{theorem}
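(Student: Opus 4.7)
The plan is to verify parts (a), (b), (c) in turn, leaning on quantitative geometric rigidity in the bending regime and on the orthogonal projection technique of \cite{BNS20,BGKN22-PP}, with the main new input being a multiplicative Cosserat-type ansatz for the recovery sequence.

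For \emph{compactness}, a pointwise quadratic lower bound $W_\pl(I+hz) \geq c|hz|^2$ at the identity immediately yields $\norm{z^h}_{\Leb^2}^2 \leq C(1 + \mathcal{E}^h_{\el+\pl}(y^h,z^h))$. For the deformation I first combine the elastic bound with this $\Leb^2$ bound on $z^h$ (via $(I+hz^h)^{-1} = I - hz^h + O(h^2|z^h|^2)$) to obtain $\int_\Omega h^{-2}\dist^2(\grad[h] y^h, \SO(3)) \leq C$, and then apply the Friesecke--James--M\"uller geometric rigidity theorem slice-wise along $\omega$ to produce $R^h : \omega \to \SO(3)$ with $\grad[h] y^h - R^h \to 0$ in $\Leb^2$; Poincar\'e along the cross-section $S$ (using the boundary condition on $y^h(0,\cdot)$) delivers the bound on $y^h - \fint_S y^h(0,\placeholder)$. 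Under \eqref{Eq:BoundedEnergy}, the rotation strain-gradient term $h^{\alpha_R p}\int H^R_\el(\grad R_y)$ together with Sobolev embedding for $p>3$ yields $\SobW^{1,p}$ control of $R_{y^h}$ with normal derivatives vanishing like $h^{\alpha_R}$; combined with its coupling to $R^h$ from the rigidity estimate, this implies $R_{y^h} \to R$ strongly with $R \in \SobH^1(\omega,\SO(3))$, while $\partial_1 v = Re_1$ is inherited from $\partial_1 y^h \to Re_1$. Weak $\Leb^2$ compactness of $z^h$ closes the argument.

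For the \emph{lower bound}, the plastic term passes to the limit by pointwise Taylor expansion $h^{-2}W_\pl(I+hz) \to Q_\pl(z)$ and weak lower semicontinuity of $\int Q_\pl(\placeholder)$. For the elastic term I use frame indifference to write
\begin{equation*}
	h^{-2} W_\el\bigl(\grad[h] y^h (I+hz^h)^{-1}\bigr) = h^{-2} W_\el(I + h G^h), \qquad G^h := h^{-1}\bigl(R_{y^h}^T \grad[h] y^h (I+hz^h)^{-1} - I\bigr),
\end{equation*}
so that $G^h$ is bounded in $\Leb^2$ by the argument in (a) and converges weakly (up to subsequences) to some $G$; Taylor expansion and convexity of $Q_\el$ yield $\liminf \mathcal{E}^h_\el(y^h,z^h) \geq \int_\Omega Q_\el(G)$. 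The $\SobH^1$ convergence of $R_{y^h}$ identifies the relevant antisymmetric part of $G$ with $R^T\partial_1 R$, and the orthogonal projection lemma of \cite{BGKN22-PP}, with the plastic term $z$ treated as an additional inhomogeneity, decomposes the limit into the effective bending contribution $Q^\mathrm{eff}_\el(R^T\partial_1 R - K^\mathrm{eff}(z))$ plus the residual $Q_\el(z^\mathrm{res})$. The dissipation lower bound follows from $h^{-1}\mathscr{D}_\pl(I+hz^h, I+h\hat z^h) \to \mathscr{R}_\pl(\hat z - z)$ pointwise and convexity of $\mathscr{R}_\pl$.

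The \emph{mutual recovery sequence} is the main obstacle and is where the paper's central new idea enters. For the plastic variable I adopt the linearized ansatz $\hat z^h := z^h + (\hat z - z)$, so that $\hat z^h \wto \hat z$ and $\hat z^h - z^h = \hat z - z$ independently of $h$; the $1$-homogeneity of $\mathscr{R}_\pl$ combined with pointwise convergence of the dissipation density then gives $\mathcal{D}^h(z^h, \hat z^h) \to \mathcal{D}^0(z,\hat z)$. For the deformation I propose a Cosserat ansatz with a \emph{multiplicative} plastic correction,
\begin{equation*}
	\hat y^h(x_1,\bar x) := \hat v(x_1) + h \hat R(x_1)(x_2 e_2 + x_3 e_3) + h^2 \hat R(x_1)\beta^h(x_1,\bar x),
\end{equation*}
where, after a preliminary density reduction to smooth $(\hat v, \hat R, \hat z)$, the corrector $\beta^h$ is designed so that the multiplicative decomposition $\grad[h]\hat y^h = F^h_\el (I+h\hat z^h)$ produces an elastic factor $F^h_\el = \hat R(I + h\hat G + o_h(1))$ in which $\hat G$ realizes the optimizer of the projection underlying $Q^\mathrm{eff}_\el$, $K^\mathrm{eff}$ and $z^\mathrm{res}$. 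The prefactor $\hat R$ in front of $\beta^h$ is essential: unlike the additive Cosserat correctors used in the infinitesimal-deformation settings of \cite{MS13,Dav14_02}, it prevents the large rotation $\hat R$ from mixing with the corrector in an uncontrolled way. With this ansatz the limsup inequality splits into three tasks: (i) matching the elastic and plastic energies to their limits via dominated convergence on the pointwise Taylor expansions; (ii) showing that the strain-gradient terms vanish, for which the constraints $\alpha_R < \tfrac{2}{3}(1-\alpha_C)$ and $p > 3$ together with the smoothness of $\beta^h$ give the right control of $\sqrt{C_{\hat y^h}}$ and $R_{\hat y^h}$ in $\SobW^{1,p}$; and (iii) enforcing the boundary condition by cutting off $\beta^h$ on a shrinking neighborhood of $\{x_1=0\}$ and performing a diagonal extraction, the cut-off error being negligible because $\hat v(0)=v_\mathrm{bc}$ and $\hat R(0)=R_\mathrm{bc}$. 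The hardest point is the simultaneous calibration of $\beta^h$ with respect to the plastic variable, the bending curvature and the boundary layer, which is precisely where the multiplicative structure offers the flexibility absent from the additive ansatz of \cite{MS13,Dav14_02}.
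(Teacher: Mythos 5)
Your parts (a) and (b) follow essentially the paper's route (geometric rigidity plus a Poincar\'e argument, Taylor expansion of $W_\el$, $W_\pl$ and the projection formalism of \cite{BNS20,BGKN22-PP}), and modulo standard truncation details they are fine. The genuine gaps are both in part (c). First, the additive plastic update $\hat z^h := z^h + (\hat z - z)$ does not work: finiteness of $\mathcal{E}^h_\pl(\hat z^h)$ requires $I + h\hat z^h \in K_\pl \subset \SL(3)$, and finiteness of $\mathcal{D}^h(z^h,\hat z^h)$ requires $(I+h\hat z^h)(I+hz^h)^{-1}$ to be reachable at finite $\mathscr{R}_\pl$-cost, hence in particular to have determinant one; an additive perturbation of $I+hz^h$ destroys both constraints, so in general your $\hat z^h$ has infinite hardening energy and infinite dissipation. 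This is precisely why the paper (following \cite{MS13}, see \cref{Prop:LimsupInequalityPlastic}) uses the multiplicative exponential update $\exp(h\tilde z^h)(I+hz^h)$ with a deviatoric, smoothed $\tilde z^h$ and a truncation to the set where the product stays in $K_\pl$.

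Second, and more fundamentally, your deformation ansatz is built from the target $(\hat v,\hat R,\hat z)$ alone (up to a corrector $\beta^h$), i.e.\ it is \emph{not} mutual: it does not see $y^h$. After the Taylor expansion, the elastic part of the left-hand side of \eqref{Eq:MututalLimsupInequality} becomes
\begin{equation*}
	\int_\Omega \bigl(\hat G^h - G^h - (\hat z^h - z^h)\bigr) : \mathbb{C}_\el \bigl(\hat G^h + G^h - \hat z^h - z^h\bigr),
\end{equation*}
where $G^h$ is the (rescaled) strain of $y^h$, which converges only \emph{weakly} in $\Leb^2$ (the strain-gradient bounds scale like $h^{\alpha_C-1}$ and give no compactness for $E^h$ or $G^h$). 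With $\hat G^h$ converging strongly to a limit determined by the target, the difference $\hat G^h - G^h$ is only weakly convergent, so the expansion contains products of two merely weakly convergent sequences (e.g.\ cross terms of type $\int G^h : \mathbb{C}_\el z^h$) with no sign, and the limsup cannot be controlled; you cannot fix this by designing $\beta^h$ to track $\hat z^h$, since $\hat z^h$ is only an $\Leb^2$ field and only its $\mathbb{H}_\mathrm{rel}$-component can be matched by a corrector of the form $\compose{\ast}{\bar\grad\varphi}$. The paper's way out is to construct $\hat y^h$ \emph{from} $y^h$: using the decomposition $y^h = v^h + hR^h\mathbf{\bar x} + h\phi^h$ of \cref{Prop:Compactness}, it sets $\hat R^h := \tilde R^h R^h$ with $\tilde R^h \approx \hat R R^T$ and transports the remainder $\phi^h$ multiplicatively (\cref{Prop:ConstructionRecoverySequenceDeformation}), so that $\hat G^h - G^h$ and $\hat z^h - z^h$ converge \emph{strongly} and every problematic term is a strong--weak pairing. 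Your "multiplicative" idea (the prefactor $\hat R$ in front of $\beta^h$) is applied in the wrong place; the essential multiplicative coupling is between $\hat R^h$ and the rotation field $R^h$ extracted from $y^h$. Relatedly, the quantitative $\Leb^\infty/\Leb^p$ estimates on $\phi^h$ and $\Delta^h$ (where the condition $\alpha_R<\tfrac23(1-\alpha_C)$ enters) are needed exactly to show that the \emph{differences} of the strain-gradient terms of $\hat y^h$ and $y^h$ vanish, a step that has no counterpart in your construction.
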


Especially interesting is the construction of the mutual recovery sequence. We use (up to some correction) a classical Cosserat ansatz
\begin{equation}
	\hat{y}^h(x) = \hat{v}^h(x_1) + hx_2\hat{R}^h(x_1)e_2 + hx_3\hat{R}^h(x_1)e_3,
\end{equation}
where $(\hat{v}^h, \hat{R}^h) \in \mathcal{A}_\mathrm{rod}$ is a suitable rod configuration, see \cref{Prop:ConstructionRecoverySequenceDeformation} for the precise ansatz. What separates this mutual recovery sequence construction from the constructions in static problems (cf.\ \cite{MM03,Neu12,BNS20}) is that in order to obtain \eqref{Eq:MututalLimsupInequality}, $\hat{v}^h$ and $\hat{R}^h$ (as well as the correction term) in general have to be chosen depending on $y^h$. The approaches developed in \cite{MS13,Dav14_02} rely on strong convergence of the differences of strains associated to $y^h$ and $\hat{y}^h$. 
We adapt this approach here, but are faced with an additional difficulty. In these papers infinitesimal deformations are considered in the limit. Thus, the linearity of the space of limiting displacements can be exploited in the construction. However, this is not the case in this paper, since bending deformations yield a non-convex state space. Hence, we have to use a new, entirely multiplicative approach. The main idea is to use a construction $\hat{R}^h = \tilde{R}R^h$ where $R^h$ belongs to a rod configuration related to $y^h$ and $\tilde{R} = \hat{R}R^T$. To our knowledge this is the first rigorous result in this direction dealing with finite deformations in the limit.
\medskip

\paragraph{Structure of the paper.}
In \cref{Sec:ModelSetting}, we discuss the 3D model in detail and introduce the required assumptions. Afterwards in \cref{Sec:RepresentationFormulas} we define the effective quantities of the limiting model and study their properties. Then, in \cref{Sec:Proofs} we prove our main results. This section is structured into separate subsections. In \cref{Sec:Compactness} we prove the compactness statement \cref{Thm:EvolutionaryGammaConvergence} (a), in \cref{Sec:LowerBound} the lower bound \cref{Thm:EvolutionaryGammaConvergence} (b), in \cref{Sec:RecoverySequence} we construct the mutual recovery sequence and finally in \cref{Sec:ConvergenceOfSolutions} we prove \cref{Thm:MainResultConvergenceOfSolutions,Thm:MainResultConvergenceOfApproximateSolutions,}.
We finish with an example in \cref{Sec:Example} where uniqueness of solutions to the limiting model can be expected.

\subsection{Notation}

\paragraph{Decomposition.} $e_1, e_2, e_3$ denotes the canonical basis of $\R^3$. We denote by $\compose{A_1}{\bar{A}}$ the $n \times 3$ matrix with $A_1 \in \R^n$ as the first column and $\bar{A} \in \R^{n \times 2}$ as the two last columns. Vise versa, given $A \in \R^{n \times 3}$ we denote the first and last two columns as $A_1 \in \R^n$ and $\bar{A} \in \R^{n \times 2}$, respectively. Especially, we denote $\grad = \compose{\partial_1}{\bar{\grad}}$. For $x \in \R^3$ we also use the notation $x = (x_1, \bar{x})$.

\paragraph{Normal part.} We denote the mapping of vectors in $\R^3$ to their normal part by $\mathbf{\bar{x}} : \R^3 \to \R^3$, $x \mapsto x_2e_2 + x_3e_3$ and identify it with the map $\mathbf{\bar{x}} : \R^2 \to \R^3$, $\bar{x} \mapsto x_2e_2 + x_3e_3$.

\paragraph{Subsets.} We introduce notations for certain subsets of $\R$ and $\R^{3\times 3}$. We denote $\R_+ := [0,\infty)$ and $\bar{\R}_+ := [0,\infty] := \R_+ \cup \set*{\infty}$. Moreover, we set 
\begin{alignat*}{2}
	\Gl(3) &:= \set*{F \in \R^{3\times 3} \given \det F \neq 0}, 
	&\quad \Gl_+(3) &:= \set*{F \in \R^{3\times 3} \given \det F > 0}, \\
	\SL(3) &:= \set*{F \in \R^{3\times 3} \given \det F = 1},
	&\quad \SO(3) &:= \set*{F \in \SL(3) \given F^T F = FF^T= I}, \\
	\R^{3\times 3}_\mathrm{sym} &:= \set*{F \in \R^{3\times 3} \given F^T = F},
	&\quad \R^{3\times 3}_\mathrm{dev} &:= \set*{F \in \R^{3\times 3}_\mathrm{sym} \given \operatorname{trace} F = 0}.
\end{alignat*}

\paragraph{Strain gradients.} We identify $\R^{3\times 3 \times 3}$ with $\R^{(3\times 3) \times 3}$, i.e.\ row vectors consisting of three matrices. Especially, we define the multiplication of $R \in \R^{3\times 3}$ and $\mathbb{F} \in \R^{3\times 3 \times 3}$ as the entry-wise matrix multiplication, i.e.\ $R\mathbb{F} := (R\mathbb{F}_1, R\mathbb{F}_2, R\mathbb{F}_3)$ where $\mathbb{F} = (\mathbb{F}_1, \mathbb{F}_2, \mathbb{F}_3)$ or more precise $(R\mathbb{F})_{ijk} := \sum_{l=1}^3 R_{il}\mathbb{F}_{ljk}$. Moreover, we denote the derivative of maps $F \in \SobW^{1,1}_\mathrm{loc}(\R^3, \R^{3\times 3})$ by $\grad F := (\partial_1 F, \partial_2 F, \partial_3 F)$ or in coordinates $(\grad F)_{ijk} := \partial_k F_{ij}$. We define the Hessian of a map $y \in \SobW^{2,1}_\mathrm{loc}(\R^3, \R^3)$ as $\grad^2 y := \grad (\grad y)$. The same notation is used for the scaled gradient $\grad[h]$ and Hessian $\grad[h]^2$.

\paragraph{Polar decomposition.} Given $h > 0$ and $y \in \SobH^1(\Omega, \R^3)$ with $\det\grad[h]y > 0$ a.e., we denote the components of the polar decomposition of $\grad[h] y$ by $R_y$ and $\sqrt{C_y}$, i.e.\ $C_y := \grad[h]y^T \grad[h]y$ and $R_y := \grad[h]y (\sqrt{C_y})^{-1}$. We suppress the dependence of $h$ in this notation, but when used, the choice of $h$ can be taken from the context. It is common knowledge that with this definition, $R_y$ minimizes the distance of $\grad[h]y$ to $\SO(3)$, i.e.\ $R_y \in \SO(3)$ a.e.\ and
\begin{equation}
	\dist\of[\big]{\grad[h]y, \SO(3)} = \abs*{\grad[h]y - R_y} = \abs*{\sqrt{C_y} - I}.
\end{equation}

\paragraph{Rotation fields.} We define the set $\SobH^1_{\SO(3)}(\omega, \R^{3\times 3}) := \SobH^1(\omega, \R^{3\times 3}) \cap \Leb^\infty(\omega, \SO(3))$. Given $R \in \SobH^1_{\SO(3)}(\omega, \R^{3\times 3})$, we introduce $K_R := R^T \partial_1 R \in \Leb^2(\omega, \R^{3\times 3}_{\skewMat})$.

\section{Model setting} \label{Sec:ModelSetting}

In this section we present the 3D model and the required assumptions in detail. Our model is given by an ERIS described by the free energy $\mathcal{E}^h$, dissipation distance $\mathcal{D}^h$ and state space $\mathcal{Q}^h$ as introduced in \eqref{Eq:DefinitionEnergy}, \eqref{Eq:DefinitionDissipation} and \eqref{Eq:DefinitionStateSpace}, respectively. These quantities are determined by
\begin{itemize}
	\item the reference domain $\Omega \subset \R^3$,
	\item the elastic energy densities $W_\el: \R^{3\times 3} \to \R$ and $H_\el^C, H_\el^R: \R^{3\times 3 \times 3} \to \R$,
	\item the hardening energy density $W_\pl: \R^{3\times 3} \to \R$,
	\item the dissipation distance density $\mathscr{D}_\pl: \R^{3 \times 3} \times \R^{3\times 3} \to \R$, which itself depends on the dissipation potential density $\mathscr{R}_\pl: \R^{3\times 3} \to \R$,
	\item and the time dependent external loading density $l: [0,T] \times \Omega \to \R^3$.
\end{itemize}
For the readers convenience, we summarize the quantities introduced in this section in \cref{Tab:Quantities,Tab:Quantities_intermediate}.

\begin{table}[!ht]
\centering
\framebox{
\begin{tabularx}{0.9\linewidth}{l@{\hspace{5pt}}l@{\hspace{5pt}}l@{}X}
\multicolumn{4}{l}{\textit{Sets and domains}} \\
	\multicolumn{3}{l@{}}{$\Omega=\omega \times S$\dotfill\dots} & reference domain with $\omega = (0,l)$ and cross-section $S \subset \R^2$, \\
	\multicolumn{3}{l@{}}{$K_\pl \subset \SL(3)$\dotfill\dots} & set of finite plastic energy. \\
\multicolumn{4}{l@{}}{\textit{Parameters}} \\
	\multicolumn{3}{l@{}}{$h > 0$\dotfill\dots} & thickness of the rod, \\
	\multicolumn{3}{l@{}}{$p > 3$\dotfill\dots} & growth of the elastic strain-gradient energy densities, cf.\ \cref{Ass:ElasticMaterialLaw}, \\
	\multicolumn{3}{l@{}}{$\alpha_C, \alpha_R > 0$\dotfill\dots} & scaling of the respective elastic strain-gradient energies, connected by $\alpha_R < \frac{2}{3}(1 - \alpha_C)$, \\
\multicolumn{4}{l}{\textit{Elastic material law}} \\
	$W_\el$ & $(F)$ &  $\in \bar{\R}_+$\dotfill\dots & elastic energy density, \\
	$Q_\el$ & $(G)$ & $\in \R_+$\dotfill\dots & linearized elastic energy density, \\
	$H^C_\el$, $H^R_\el$ & $(\mathbb{F})$ & $\in \R_+$\dotfill\dots & elastic strain-gradient energy densities, \\
\multicolumn{4}{l}{\textit{Plastic material law}} \\
	$W_\pl$ & $(F)$ & $\in \bar{\R}_+$\dotfill\dots & hardening energy density, \\
	$Q_\pl$ & $(G)$ & $\in \R_+$\dotfill\dots & linearized hardening energy density, \\
	$\mathscr{R}_\pl$ & $(\dot{F})$ & $\in \bar{\R}_+$\dotfill\dots & dissipation potential density, \\
	$\mathscr{D}_\pl$ & $(F,\hat{F})$ & $\in \bar{\R}_+$\dotfill\dots & dissipation distance density, \\
\multicolumn{4}{l}{\textit{External loading}} \\
	$l$ & $(t,x)$ & $\in \R^3$\dotfill\dots & external loading density.
\end{tabularx} }
\caption{Summary of the main quantities discussed in this paper. Here $t \in [0,T]$ represents a dependence on the time, $x \in \Omega$ on the (material) coordinates, $F,G \in \R^{3\times 3}$ on the (linearized) strain, $\dot{F} \in \R^{3\times 3}$ on the evolution of the strain and $\mathbb{F} \in \R^{3\times 3 \times 3}$ on the strain-gradient.}
\label{Tab:Quantities}
\end{table}

\begin{table}[!ht]
\centering
\framebox{
\begin{tabularx}{0.9\linewidth}{l@{\hspace{5pt}}l@{\hspace{5pt}}l@{}X}
\multicolumn{4}{l}{\textit{Elastic material law}} \\
	$\mathbb{C}_\el$ &  & $\in \R^{3\times 3 \times 3 \times 3}$\dotfill\dots & fourth-order tensor associated to $Q_\el$, \\
	$r_\el$ & $(\delta)$ & $\in \bar{\R}_+$\dotfill\dots & rest term for the quadratic expansion of $W_\el$. \\
\multicolumn{4}{l}{\textit{Plastic material law}} \\
	$\mathbb{C}_\pl$ & & $\in \R^{3\times 3 \times 3 \times 3} $\dotfill\dots & fourth-order tensor associated to $Q_\pl$, \\
	$r_\pl$ & $(\delta)$ & $\in \bar{\R}_+$\dotfill\dots & rest term for the quadratic expansion of $W_\pl$.
\end{tabularx} }
\caption{Summary of helping quantities and quantities relevant for the analysis. The interpretation is as in \cref{Tab:Quantities} and in addition $\delta \in \R_+$ corresponds to a dependence of the modulus of $G$.}
\label{Tab:Quantities_intermediate}
\end{table}

\paragraph{Reference domain.}

Let $S \subset \R^2$ a open, bounded and connected Lipschitz domain and $\omega := (0,l)$. For simplicity, we assume the positioning properties
\begin{equation} \label{Eq:ReferenceDomainCentered}
	\int_S x_2 \,\dd\bar{x} = \int_S x_3 \,\dd\bar{x} = \int_S x_2 x_3 \,\dd \bar{x} = 0.
\end{equation}
The (rescaled) reference domain is denoted by $\Omega := \omega \times S$.

\paragraph{Loading.}

We assume $l \in \SobW^{1,1}\of*{[0,T], \SobH^1(\Omega, \R^3)}$.

\paragraph{Elastic material law.} We consider the following properties for the energy densities $W_\el$, $H_\el^C$ and $H_\el^R$.

\begin{enumerate}
\renewcommand{\labelenumi}{(WE\arabic{enumi})}
\renewcommand{\theenumi}{WE\arabic{enumi}}
\makeatletter
\def\@currentlabelname{\upshape (WE1) -- (WE4)}
\makeatother

	\item \label{Ass:WE}\label{Ass:W_FrameIndifference}(Frame indifference):
	\begin{equation*}
		W_\el(RF) = W_\el(F) \qquad \text{for all } F \in \R^{3 \times 3}, R \in \SO(3);
	\end{equation*}
	
	\item \label{Ass:W_NonDegeneracy}(Non-degeneracy): There exists some $0 < c \leq C$ and $\rho > 0$, such that
	\begin{alignat*}{2}
		W_\el(F) &\geq c \dist^2(F, \SO(3)) &&\qquad \text{for all } F \in \R^{3\times 3}, \\
		W_\el(F) &\leq C \dist^2(F, \SO(3)) &&\qquad \text{for all } F \in \R^{3\times 3} \text{ with } \dist^2(F,\SO(3)) \leq \rho;
	\end{alignat*}
	
	\item \label{Ass:W_Expansion}(Quadratic expansion): There exists a quadratic form $Q_\el: \R^{3\times 3} \to \R$ and an increasing map $r_\el: \R_+ \to \bar{\R}_+$ with $\lim_{\delta \to 0} r_\el(\delta) = 0$, such that
	\begin{equation*}\label{Eq:TaylorExpansion}
		\abs*{W_\el(I+G) - Q_\el(G)} \leq \abs*{G}^2 r_\el(\abs*{G}) \qquad \text{for all } G \in \R^{3\times 3};
	\end{equation*}
	
	\item \label{Ass:W_Continuity}(Continuity): There exist $\gamma > 0$ and $L \in \R_+$, such that for all $F_1, F_2 \in B_\gamma(I)$ and $F \in \R^{3 \times 3}$,
	\begin{equation*}
		\abs*{W_\el(F_1 F F_2) - W_\el(F)} \leq L (W_\el(F) + 1)(\abs*{F_1 - I} + \abs*{F_2 - I});
	\end{equation*}
\end{enumerate}

\begin{enumerate}
\renewcommand{\labelenumi}{(HE\arabic{enumi})}
\renewcommand{\theenumi}{HE\arabic{enumi}}
\makeatletter
\def\@currentlabelname{\upshape (HE1) -- (HE3)}
\makeatother

	\item \label{Ass:HE}\label{Ass:HE_FrameIndifference}(Frame indifference):
	\begin{equation*}
		H_\el^R(R\mathbb{F}) = H_\el^R(\mathbb{F}) \qquad \text{for all } \mathbb{F} \in \R^{3 \times 3 \times 3}, R \in \SO(3);
	\end{equation*}
	
	\item \label{Ass:HE_Growth}($p$-growth): For each $H_\el \in \set*{H_\el^C, H_\el^R}$ there exist $0 < c \leq C$, such that
	\begin{equation*}
		c \abs*{\mathbb{F}}^p \leq H_\el(\mathbb{F}) \leq C \left( \abs*{\mathbb{F}}^p + 1\right) \qquad \text{for all } \mathbb{F} \in \R^{3 \times 3 \times 3};
	\end{equation*}
	
	\item \label{Ass:HE_Continuity}($p$-Lipschitz continuity): There exists $L_C, L_R \in \R_+$, such that
	\begin{align*}
		\abs*{H_\el^C(\mathbb{F}) - H_\el^C(\hat{\mathbb{F}})} &\leq L_C \left(\abs*{\mathbb{F}}^{p-1} + \abs*{\hat{\mathbb{F}}}^{p-1}\right) \abs*{\mathbb{F} - \hat{\mathbb{F}}}, \\
		\abs*{H_\el^R(\mathbb{F}) - H_\el^R(\hat{\mathbb{F}})} &\leq L_R \left(\abs*{\mathbb{F}}^{p-1} + \abs*{\hat{\mathbb{F}}}^{p-1} + 1\right) \abs*{\mathbb{F} - \hat{\mathbb{F}}}  && \text{for all } \mathbb{F}, \hat{\mathbb{F}} \in \R^{3 \times 3 \times 3}.
	\end{align*}
\end{enumerate}

\begin{assumption}[Elastic material law] \label{Ass:ElasticMaterialLaw}
We assume that $W_\el$ satisfies \nameref{Ass:WE} and $H_\el^C, H_\el^R$ satisfy \nameref{Ass:HE} for some $p > 3$. Moreover, we fix $0 < \alpha_C, \alpha_R < 1$ with $\alpha_R < \frac{2}{3}(1- \alpha_C)$.
\end{assumption}

We denote by $\mathbb{C}_\el \in \R^{3\times 3 \times 3 \times 3}$ the symmetric fourth-order tensor associated to the quadratic form $Q_\el$ by the polarization formula
\begin{equation}
	G : \mathbb{C}_\el \hat{G} = \tfrac{1}{2} \left( Q_\el(G + \hat{G}) - Q_\el(G) - Q_\el(\hat{G})\right), \qquad G,\hat{G} \in \R^{3\times 3}.
\end{equation}

\begin{remark}~
\begin{enumerate}[(i)]
	\item The assumptions \nameref{Ass:WE} are standard assumptions for the elastic energy density and used in a similar form in various contributions, e.g.\ \cite{BNPS22,Neu12,MS13,Dav14_02}. The condition \eqref{Ass:W_Continuity} is usually given in the form $\abs{F^T \grad W_\el(F)} \leq c(W_\el(F) + 1)$ which is stronger as shown in \cite[Lem.~4.1]{MS13}. Since we only require \eqref{Ass:W_Continuity}, we choose to assume this condition directly.
	
	\item The quadratic form $Q_\el$ is uniquely defined from \eqref{Ass:W_Expansion} as $Q_\el(G) = \lim_{h \to 0} \tfrac{1}{h^2} W_\el(I + hG)$. It is well-known in elasticity that \nameref{Ass:WE} imply that $Q_\el$ satisfies
	\begin{equation}
		c \abs*{\sym G}^2 \leq Q_\el(G) \leq C \abs*{\sym G}^2 \quad \text{for all } G \in \R^{3\times 3},
	\end{equation}
	with the same constants as in \eqref{Ass:W_NonDegeneracy}. Especially, $Q_\el(G)$ and $\hat{G} : \mathbb{C}_\el G$ depend only on the symmetric part of $G$ and $\hat{G}$.
	
	\item The specific choice of $r_\el$ is not important. However, we may use the explicit choice
	\begin{equation}
		r_\el(\delta) := \sup\set*{\tfrac{1}{\abs*{G}^2} \abs[\big]{W_\el(I + G) - Q_\el(G)} \given G \in \R^{3\times 3}, 0 < \abs{G} \leq \delta}, \qquad \delta > 0.
	\end{equation}
\end{enumerate}
\end{remark}

\begin{remark} \label{Rem:StrainGradientTerms}
The growth condition \eqref{Ass:HE_Growth} and $p$-Lipschitz continuity \eqref{Ass:HE_Continuity} are usual assumptions for strain-gradient elasticity, see e.g. \cite{FK18,FK20,DKPS21,BFK23}. It is natural that the frame-indifference \eqref{Ass:HE_FrameIndifference} is only assumed for the part depending on $R_y$, since $\sqrt{C_y}$ is already independent of the frame, cf.\ \cite[Sect.~2(b)]{DSV09}. The introduction of the strain-gradient terms is mainly for analytical reasons. Thus, we seek to choose them in such a way that they vanish in the limit. We motivate that this is expected with our choice of the individual scalings $h^{-\alpha_C}$ and $h^{\alpha_R}$: In the limit we expect deformations that are close to the classical Cosserat ansatz
\begin{equation*}
	y^h(x) = v(x_1) + hR(x_1)\mathbf{\bar{x}}(x),
\end{equation*}
for some $(v,R) \in \mathcal{A}_{\rm rod}$. We observe
\begin{equation*}
	\grad[h] y^h = R + h R\compose*{K_R \mathbf{\bar{x}}}{0}, \qquad
	C_{y^h} = I + 2h \sym \compose*{K_R \mathbf{\bar{x}}}{0} + h^2 \compose*{K_R \mathbf{\bar{x}}}{0}^T \compose*{K_R \mathbf{\bar{x}}}{0}.
\end{equation*}
Together with \cref{Lem:EquivalenceRestTerms} this shows that we can expect $h^{-\alpha_C} \grad \sqrt{C_{y^h}} \to 0$ and $h^{\alpha_R} \grad R_{y^h} \to 0$ whenever $0 < \alpha_C, \alpha_R < 1$. The assumption $\alpha_R < \frac{2}{3}(1- \alpha_C)$ is technical and allows us to infer sufficient regularity for the correction term $\phi^h$, see \cref{Rem:alpha_q}.
\end{remark}

\paragraph{Plastic material law.}

We introduce the following two material classes for the hardening energy density and the dissipation potential which are adapted from \cite{MS13,Dav14_02}.

\begin{definition}[{cf.\ \cite{MS13,Dav14_02}}] \label{Def:MaterialClass_WP}
Let $K_\pl \subset \operatorname{SL}(3)$ compact, such that $I$ is a relatively interior point of $K_\pl$. We denote by $\mathfrak{W}_\pl(K_\pl)$ the set of all functions $W_\pl:\R^{3\times 3} \to \bar{\R}_+$, which are of the form
\begin{equation*}
	W_\pl(F) = \begin{cases} \widetilde{W}_\pl(F) & \text{if } F \in K_\pl, \\ \infty & \text{else,} \end{cases}
\end{equation*}
with $\widetilde{W}_\pl: \R^{3\times 3} \to \R_+$ satisfying the following properties.
\begin{enumerate}
\renewcommand{\labelenumi}{(WP\arabic{enumi})}
\renewcommand{\theenumi}{WP\arabic{enumi}}
\makeatletter
\def\@currentlabelname{\upshape (HE1) -- (HE3)}
\makeatother

	\item \label{Ass:WP_Continuity}(Continuity): We find $L \in \R_+$ and an open set $O \subset \R^{3\times 3}$ with $K_\pl \subset\subset O$, such that $\tilde{W}_\pl$ is $L$-Lipschitz continuous on $O$, i.e.\
	\begin{equation*}
		\abs*{\widetilde{W}_\pl(F) - \widetilde{W}_\pl(\hat{F})} \leq L \abs*{F - \hat{F}} \qquad \text{for all } F,\hat{F} \in O.
	\end{equation*}
	\item \label{Ass:WP_NonDegeneracy}(Non-degeneracy): There exists $c > 0$, such that
	\begin{equation*}
		\widetilde{W}_\pl(I + G) \geq c \abs*{G}^2 \qquad \text{for all } G \in \R^{3\times 3}.
	\end{equation*}
	\item \label{Ass:WP_Expansion}(Quadratic expansion): There exists a quadratic form $Q_\pl: \R^{3\times 3} \to \R$ and an increasing map $r_\pl: \R_+ \to \bar{\R}_+$ with $\lim_{\delta \to 0} r_\pl(\delta) = 0$, such that
	\begin{equation*}
		\abs*{\widetilde{W}_\pl(I+G) - Q_\pl(G)} \leq \abs*{G}^2 r_\pl(\abs*{G}) \qquad \text{for all } G \in \R^{3\times 3}.
	\end{equation*}
\end{enumerate}
\end{definition}

\begin{definition}[{cf.\ \cite{MS13,Dav14_02}}] \label{Def:DissipationClass_RP}
We denote by $\mathfrak{R}_\pl$ the set of all functions $\mathscr{R}_\pl: \R^{3\times 3} \to \bar{\R}_+$, which are of the form 
\begin{equation*}
	\mathscr{R}_\pl(\dot{F}) = \begin{cases} \widetilde{\mathscr{R}}_\pl(\dot{F}) & \text{if } \dot{F} \in \R^{3\times 3}_\mathrm{dev}, \\ \infty & \text{else,} \end{cases}
\end{equation*}
where $\widetilde{\mathscr{R}}_\pl: \R^{3\times 3}_\mathrm{dev} \to \R_+$ is a convex, positively 1-homogeneous function, such that there exist $0 < c \leq C$ satisfying
\begin{equation}
	c \abs*{\dot{F}} \leq \widetilde{\mathscr{R}}_\pl(\dot{F}) \leq C \abs*{\dot{F}} \qquad \text{for all } \dot{F} \in \R^{3\times 3}_\mathrm{dev}.
\end{equation}
\end{definition}

\begin{assumption}[Plastic material law] \label{Ass:PlasticMaterialLaw}
We assume that there exists a compact set $K_\pl \subset \operatorname{SL}(3)$ with $I$ as an relatively interior point, such that $W_\pl \in \mathfrak{W}_\pl(K_\pl)$ and $\mathscr{R}_\pl \in \mathfrak{R}_\pl$.
\end{assumption}

Given $\mathscr{R}_\pl$, we define the associated dissipation distance density $\mathscr{D}_\pl: \R^{3\times 3} \times \R^{3\times 3} \to \bar{\R}_+$ by
\begin{subequations} \label{Eq:DefinitionDissipationDistance}
\begin{align}
	\mathscr{D}_\pl(F, \hat{F}) &:= \begin{cases} \mathscr{D}_\pl(I, \hat{F} F^{-1}) & \text{if } F, \hat{F} \in \Gl_+(3), \\ \infty & \text{else,} \end{cases} \\
	\mathscr{D}_\pl(I, F) &:= \inf_{\substack{P \in \Cont^1([0,1],\Gl_+(3)), \\ P(0)=I, P(1)=F}} \int_0^1 \mathscr{R}_\pl(\dot{P}(t)P(t)^{-1}) \,\dd t.
\end{align}
\end{subequations}
Moreover, we denote by $\mathbb{C}_\pl \in \R^{3\times 3 \times 3 \times 3}$ the fourth-order tensor, associated to $Q_\pl$ by the polarization formula
\begin{equation}
	G : \mathbb{C}_\pl \hat{G} = \tfrac{1}{2} \left( Q_\pl(G + \hat{G}) - Q_\pl(G) - Q_\pl(\hat{G})\right), \qquad G,\hat{G} \in \R^{3\times 3}.
\end{equation}

\begin{remark}~ \label{Rem:PlasticProperties}
\begin{enumerate}[(i)]
	\item Similarly as for the elastic quantity, $Q_\pl$ is uniquely defined from \eqref{Ass:WP_Expansion} as $Q_\pl(G) = \lim_{h \to 0} \tfrac{1}{h^2} W_\pl(I + hG)$ and $r_\pl$ admits the explicit choice
	\begin{equation}
		r_\pl(\delta) := \sup\set*{\tfrac{1}{\abs*{G}^2} \abs*{W_\pl(I + G) - Q_\pl(G)} \given G \in \R^{3\times 3}, \abs*{G} \leq \delta}, \qquad \delta > 0.
	\end{equation}
	Moreover, from \eqref{Ass:WP_NonDegeneracy} we infer that for some constants $0 < c \leq C$,
	\begin{equation}
		c \abs*{G}^2 \leq Q_\pl(G) \leq C \abs*{G}^2 \qquad \text{for all } G \in \R^{3\times 3}.
	\end{equation}
	
	\item The properties of the set $K_\pl$ ensure that there exist constants $0 < c \leq C$, such that
	\begin{equation}\label{Eq:BoundednessInSetK}
		\abs*{F} \leq C, \qquad
		\abs*{F^{-1}} \leq C, \qquad
		\abs*{\hat{F} - I} \geq c, \qquad \text{for all } F \in K_\pl, \hat{F} \in \SL(3) \setminus K_\pl.
	\end{equation}
	While it is a very technical assumption that $W_\pl$ is infinite outside such a set, it is an important property in order to ensure that, as established in \cite[Lem.~3.1]{MS13},
	\begin{equation}
		\limsup_{h \to 0} \mathcal{E}_{\el+\pl}(y^h, z^h) < \infty \quad\Rightarrow\quad
		\limsup_{h \to 0} h^{-2} \int_\Omega \dist^2\of[\big]{\grad[h]y^h, \SO(3)} < \infty,
	\end{equation}
	i.e.\ sequences of finite energy also have finite bending energy and thus we are able to pass to a bending theory in the limit, cf.\ \cite{FJM02,MM03}.
	
	\item It is not hard to show that the dissipation distance $\mathscr{D}_\pl$ satisfies the triangle inequality
	\begin{equation}
		\mathscr{D}_\pl(F_1, F_3) \leq \mathscr{D}_\pl(F_1, F_2) + \mathscr{D}_\pl(F_2, F_3), \qquad \text{for all } F_1, F_2, F_3 \in \R^{3\times 3}.
	\end{equation}
	Moreover, it is proven in \cite[Rem.~2.4]{Dav14_02} that there exists a constant $c > 0$ such that
	\begin{equation}
		\mathscr{D}_\pl(F, \hat{F}) \leq c, \qquad \mathscr{D}_\pl(I, F) \leq c \abs*{F - I} \qquad \text{for all } F,\hat{F} \in K_\pl.
	\end{equation}
\end{enumerate}
\end{remark}

% !TEX root = bending_rod_elastoplastic.tex
% !TeX spellcheck = en_US
% !TeX encoding = UTF-8
% !TeX program = pdflatex

\section{Identification of the limiting model} %Representation formulas
\label{Sec:RepresentationFormulas}

The identification of the limiting elastic energy according to \eqref{Eq:LimitingElasticEnergy} has already been provided in \cite{BNS20,BGKN22-PP}. The form of the limiting energy is due to a representation of the limit of the nonlinear strain $E^h := h^{-1}(\sqrt{C_{y^h}} - I)$. According to \cite[Prop.~5.1]{BNS20}, the nonlinear strain admits a limit of the form
\begin{equation}
	E^h \wto \sym\compose*{K_R \mathbf{\bar{x}}}{0} + \chi \qquad \text{in } \Leb^2(\Omega, \R^{3\times 3}),
\end{equation}
where $R \in \SobH^1_{\SO(3)}(\omega, \R^{3\times 3})$ is given as in \eqref{Eq:ConvergenceOfEnergeticSolutions} and $\chi$ lives in the relaxation space $\Leb^2(\omega, \mathbb{H}_\mathrm{rel})$ with
\begin{equation}
	\mathbb{H}_\mathrm{rel} := \set*{\sym\compose*{a e_1}{\bar{\grad}\varphi} \given a \in \R, \varphi \in \SobH^1(S, \R^3)}.
\end{equation}
We shall see that this representation justifies that the limiting elastic energy can be written in the form
\begin{align}
	\mathcal{E}^0_\el(v,R,z) &= \min_{\chi \in \Leb^2(\omega, \mathbb{H}_\mathrm{rel})}\int_\Omega Q_\el\big( \sym\compose{K_R\mathbf{\bar{x}}}{0} + \chi - \sym(z)\big) \label{Eq:CharacterizationLimitingEnergy}\\
	&= \int_\omega \min_{\chi \in \mathbb{H}_\mathrm{rel}} \int_S Q_\el\big( \sym\compose{K_R\mathbf{\bar{x}}}{0} + \chi - \sym(z)\big). \notag
\end{align}
Note that the second equality can be obtained by appealing to the respective Euler-Lagrange equations of the left- and right-hand side. The right-hand side can be treated using orthogonal projections in the Hilbert space $\mathbb{H} := \Leb^2(S, \R^{3\times 3}_{\sym})$ equipped with the scalar product $(\chi,\psi)_Q := \int_S \chi: \mathbb{C}_\el \psi$. The details have already been carried out in \cite[Sect.~2.4]{BNS20} and more specifically \cite[Sect.~2.3]{BGKN22-PP} and are thus omitted here. To introduce the effective coefficients, we consider the subspaces $\mathbb{H}_\mathrm{micro}$, $\mathbb{H}_\mathrm{macro}$ and $\mathbb{H}_\mathrm{res}$ given by the orthogonal decompositions (in $\mathbb{H}$),
\begin{gather}
	\mathbb{H}_\mathrm{micro} := \set*{\sym\compose*{K\mathbf{\bar{x}}}{0} + \chi \given K \in \R^{3\times 3}_{\skewMat}, \chi \in \mathbb{H}_\mathrm{rel}}, \\
	\mathbb{H} = \mathbb{H}_\mathrm{micro} \oplus \mathbb{H}_\mathrm{res}, \qquad
	\mathbb{H}_\mathrm{micro} = \mathbb{H}_\mathrm{macro} \oplus \mathbb{H}_\mathrm{rel}.
\end{gather}
Moreover, we denote orthogonal projections in $\mathbb{H}$ onto some closed subspace $\mathcal{A} \subset \mathbb{H}$ by $\operatorname{P}_\mathcal{A}$. The procedure allows us to justify \eqref{Eq:LimitingElasticEnergy} by the representation
\begin{equation}
	\min_{\chi \in \mathbb{H}_\mathrm{rel}} \int_S Q_\el\big( \sym\compose{K\mathbf{\bar{x}}}{0} + \chi - z\big) = Q_\el^\mathrm{eff}\big(K - K^\mathrm{eff}(z)\big) + \int_S Q_\el\big(z^\mathrm{res}\big), 
\end{equation}
where $Q_\el^\mathrm{eff}$, $K^\mathrm{eff}(z)$ and $z^\mathrm{res}$ are defined as follows:

\begin{definition}\label{Def:EffectiveCoefficients}
Given $K \in \R^{3\times 3}_{\skewMat}$ and $z \in \Leb^2(S, \R^{3\times 3}_{\sym})$, we set
\begin{align}
	Q_\el^\mathrm{eff}(K) &:= \norm*{\operatorname{P}_{\mathbb{H}_\mathrm{macro}}\big(\sym\compose*{K\mathbf{\bar{x}}}{0}\big)}_Q^2, \\
	z^\mathrm{res} &:= \operatorname{P}_{\mathbb{H}_\mathrm{res}}(z),
\end{align}
and define $K^\mathrm{eff}(z) \in \R^{3\times 3}_{\skewMat}$ as the unique solution to
\begin{equation}
	 \operatorname{P}_{\mathbb{H}_\mathrm{macro}}\Big(\sym\compose*{K^\mathrm{eff}(z)\mathbf{\bar{x}}}{0}\Big) = \operatorname{P}_{\mathbb{H}_\mathrm{macro}}(z).
\end{equation}
Moreover, for $x \in \Omega$ and $z \in \Leb^2(\Omega, \R^{3\times 3})$, we set $z_{x_1} := \sym(z(x_1,\placeholder)) \in \Leb^2(S, \R^{3\times 3}_{\sym})$ and
\begin{equation}
	K^\mathrm{eff}(z)(x_1) := K^\mathrm{eff}(z_{x_1}), \qquad
	z^\mathrm{res}(x_1,\bar{x}) := z_{x_1}^\mathrm{res}(\bar{x}).
\end{equation}
\end{definition}

It has been shown in \cite[Lem.~2.10]{BNS20} (see also \cite[Lem.~2.8]{BGKN22-PP}) that $Q_\el^\mathrm{eff}$ is non-degenerate:

\begin{lemma}
There exist constants $0 < c \leq C$, such that for all $K \in \R^{3\times 3}_{\skewMat}$, $z \in \Leb^2(S, \R^{3\times 3}_{\sym})$,
\begin{equation*}
	c \abs*{K}^2 \leq Q_\el^\mathrm{eff}(K) \leq C \abs*{K}^2, \qquad
	\abs*{K^\mathrm{eff}(z)} \leq C\norm*{z}_{\Leb^2(S)}.
\end{equation*}
\end{lemma}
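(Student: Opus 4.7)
The plan is to reduce all three estimates to finite-dimensional linear algebra on $\R^{3\times 3}_{\skewMat}$, after observing that \eqref{Ass:W_NonDegeneracy} implies $c\abs*{\sym G}^2 \leq Q_\el(G) \leq C\abs*{\sym G}^2$, so that the inner product $(\cdot,\cdot)_Q$ is uniformly equivalent to the standard $\Leb^2(S)$-inner product on $\mathbb{H}$. Introduce the linear map $\Phi: \R^{3\times 3}_{\skewMat} \to \mathbb{H}_\mathrm{micro}$, $K \mapsto \sym\compose{K\mathbf{\bar{x}}}{0}$. Since $\mathbb{H}_\mathrm{macro}$ is by construction the $(\cdot,\cdot)_Q$-orthogonal complement of $\mathbb{H}_\mathrm{rel}$ inside $\mathbb{H}_\mathrm{micro}$,
\begin{equation*}
	Q_\el^\mathrm{eff}(K) = \norm*{\operatorname{P}_{\mathbb{H}_\mathrm{macro}}\Phi(K)}_Q^2 = \inf_{\chi \in \mathbb{H}_\mathrm{rel}}\norm*{\Phi(K) + \chi}_Q^2.
\end{equation*}
The upper bound $Q_\el^\mathrm{eff}(K) \leq C\abs*{K}^2$ is then immediate by choosing $\chi=0$ and using $Q_\el(G) \leq C\abs*{\sym G}^2$ together with $\int_S \abs*{\bar{x}}^2 \,\dd\bar{x} < \infty$.

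For the lower bound, by the equivalence of $\norm*{\cdot}_Q$ and $\norm*{\cdot}_{\Leb^2(S)}$ on $\mathbb{H}$, it suffices to produce $c>0$ with $\inf_{\chi\in\mathbb{H}_\mathrm{rel}}\int_S\abs*{\Phi(K)+\chi}^2 \geq c\abs*{K}^2$. Since $\R^{3\times 3}_{\skewMat}$ is finite-dimensional, by equivalence of norms on its image this reduces to showing that the continuous linear map $K \mapsto \operatorname{P}_{\mathbb{H}_\mathrm{macro}}\Phi(K)$ is injective. Its kernel consists of those $K$ for which $\Phi(K)\in\mathbb{H}_\mathrm{rel}$, i.e.\ there exist $a\in\R$ and $\varphi\in\SobH^1(S,\R^3)$ with $\sym\compose{K\mathbf{\bar{x}}}{0} = \sym\compose{ae_1}{\bar{\grad}\varphi}$. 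Matching the $(1,1)$-entry shows that the affine function $(K\mathbf{\bar{x}})_1$ of $\bar{x}$ equals the constant $a$; using that $S$ has nonempty interior, this forces $K_{12}=K_{13}=0$ and $a=0$. Matching the $(i,1)$-entries for $i=2,3$ prescribes $\partial_2\varphi_1$ and $\partial_3\varphi_1$ as explicit linear functions of $\bar{x}$ with coefficients $\pm K_{23}$, so the distributional version of Schwarz's theorem ($\partial_3\partial_2 \varphi_1 = \partial_2\partial_3\varphi_1$ in $\SobH^1$) forces $K_{23}=0$. Hence $K=0$.

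The bound on $K^\mathrm{eff}(z)$ follows immediately: apply $\norm*{\cdot}_Q$ to both sides of the defining identity $\operatorname{P}_{\mathbb{H}_\mathrm{macro}}\Phi(K^\mathrm{eff}(z)) = \operatorname{P}_{\mathbb{H}_\mathrm{macro}}z$ to obtain, with the lower bound just established and contractivity of $\operatorname{P}_{\mathbb{H}_\mathrm{macro}}$,
\begin{equation*}
	\sqrt{c}\,\abs*{K^\mathrm{eff}(z)} \leq Q_\el^\mathrm{eff}(K^\mathrm{eff}(z))^{1/2} = \norm*{\operatorname{P}_{\mathbb{H}_\mathrm{macro}}z}_Q \leq \norm*{z}_Q \leq \sqrt{C}\,\norm*{z}_{\Leb^2(S)}.
\end{equation*}
The only nonroutine step is the injectivity of $\operatorname{P}_{\mathbb{H}_\mathrm{macro}}\circ \Phi$, which rests on the componentwise identification above together with the distributional Schwarz identity for $\varphi_1$; all remaining inequalities are either contractivity of orthogonal projections or the uniform equivalence $\norm*{\cdot}_Q \sim \norm*{\cdot}_{\Leb^2(S)}$ on symmetric matrices.
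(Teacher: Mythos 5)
Your proof is correct and follows essentially the same route as the sources the paper cites for this lemma (\cite[Lem.~2.10]{BNS20} and \cite[Lem.~2.8]{BGKN22-PP}; the paper itself gives no proof): upper bound by taking $\chi=0$ and using $Q_\el(G)\leq C\abs{\sym G}^2$, lower bound by injectivity of $K\mapsto \operatorname{P}_{\mathbb{H}_\mathrm{macro}}\big(\sym\compose{K\mathbf{\bar{x}}}{0}\big)$ combined with finite-dimensionality of $\R^{3\times 3}_{\skewMat}$ (your component-matching argument, including the distributional Schwarz identity forcing $K_{23}=0$, is sound), and the bound on $K^\mathrm{eff}(z)$ directly from its defining identity, the lower bound, and contractivity of the orthogonal projection. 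The only ingredient you use silently is the closedness of $\mathbb{H}_\mathrm{rel}$ in $\mathbb{H}$, needed so that a vanishing macroscopic projection yields genuine representatives $a\in\R$, $\varphi\in\SobH^1(S,\R^3)$ rather than an element of the closure; this is already presupposed by the paper's orthogonal decomposition $\mathbb{H}_\mathrm{micro}=\mathbb{H}_\mathrm{macro}\oplus\mathbb{H}_\mathrm{rel}$ and follows from Poincaré's and Korn's inequalities on $S$, so it is not a gap.
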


As in \cite{BNS20,BGKN22-PP} we obtain an alternative representation of these quantities using solutions to elliptic problems. For this we utilize the following basis of $\R^{3\times 3}_{\skewMat}$,
\begin{equation} \label{Eq:BasisSkewMatrices}
	K_1 := \tfrac{1}{\sqrt{2}}\left( 0 \;\middle|\; -e_3 \;\middle|\; e_2 \right), \qquad
	K_2 := \tfrac{1}{\sqrt{2}}\left( -e_2 \;\middle|\; e_1 \;\middle|\; 0 \right), \qquad
	K_3 := \tfrac{1}{\sqrt{2}}\left( -e_3 \;\middle|\; 0 \;\middle|\; e_1 \right).
\end{equation}

\begin{proposition} \label{Prop:EffectiveQuantities}
Let $K \in \R^{3\times 3}_{\skewMat}$ and $z \in \Leb^2(S, \R^{3\times 3}_{\sym})$. Define $(a_i, \varphi_i)$, $i=1,2,3$, as the unique minimizer satisfying $\fint_S \varphi_i = 0$ and $\fint_S \partial_3\varphi_i \cdot e_2 - \partial_2 \varphi_i \cdot e_3 = 0$ of
\begin{equation*}
	\R \times \SobH^1(S, \R^3) \ni (a,\varphi) \mapsto \int_S Q_\el\compose*{K_i\mathbf{\bar{x}} + ae_1}{\bar{\grad}\varphi},
\end{equation*}
and $(a_z, \varphi_z)$ as the unique minimizer satisfying $\fint_S \varphi_z = 0$ and $\fint_S \partial_3\varphi_z \cdot e_2 - \partial_2 \varphi_z \cdot e_3 = 0$ of
\begin{equation*}
	\R \times \SobH^1(S, \R^3) \ni (a,\varphi) \mapsto \int_S Q_\el\big(\compose*{ae_1}{\bar{\grad}\varphi} + z\big),
\end{equation*}
Let $\Psi_i := \sym\compose*{K_i\mathbf{\bar{x}} + a_ie_1}{\bar{\grad}\varphi_i}$ and $\mathbf{C} \in \R^{3\times 3}$ by $\mathbf{C}_{ij} = (\Psi_i, \Psi_j)_Q$, $i,j=1,2,3$. Then,
\begin{subequations}
\begin{align}
	Q_\el^\mathrm{eff}(K) &= \sum_{i,j=1}^3 \mathbf{C}_{ij} (K \cdot K_i) (K \cdot K_j) \\
	K^\mathrm{eff}(z) &= \sum_{i,j=1}^3 (z,\Psi_j)_Q (\mathbf{C}^{-1})_{ij} K_i, \\
	z^\mathrm{res} &= z + \sym\compose*{a_z e_1}{\bar{\grad}\varphi_z} - \sum_{i,j=1}^3 (z,\Psi_j)_Q (\mathbf{C}^{-1})_{ij} \Psi_i.
\end{align}
\end{subequations}

\end{proposition}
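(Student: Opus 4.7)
The plan is to reinterpret both variational problems as orthogonal projections in the Hilbert space $\mathbb{H} = \Leb^2(S, \R^{3\times 3}_{\sym})$ equipped with the $(\cdot,\cdot)_Q$ inner product, and then to carry out the finite-dimensional linear algebra associated with the decomposition $\mathbb{H} = \mathbb{H}_{\mathrm{macro}} \oplus \mathbb{H}_{\mathrm{rel}} \oplus \mathbb{H}_{\mathrm{res}}$. Since the minimization of $(a,\varphi) \mapsto \int_S Q_\el(\sym\compose{K_i\mathbf{\bar{x}} + ae_1}{\bar\grad\varphi})$ is nothing but the distance minimization of $\sym\compose{K_i\mathbf{\bar{x}}}{0}$ to the affine subspace obtained by adding $\mathbb{H}_{\mathrm{rel}}$, and similarly $(a_z,\varphi_z)$ realizes the nearest element of $-z$ in $\mathbb{H}_{\mathrm{rel}}$, one expects
\[
\Psi_i = \operatorname{P}_{\mathbb{H}_{\mathrm{macro}}}\bigl(\sym\compose{K_i\mathbf{\bar{x}}}{0}\bigr), \qquad
\sym\compose{a_z e_1}{\bar\grad\varphi_z} = -\operatorname{P}_{\mathbb{H}_{\mathrm{rel}}}(z).
\]

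To make this precise I first address well-posedness. The map $(a,\varphi)\mapsto\sym\compose{ae_1}{\bar\grad\varphi}$ has as its kernel exactly the quadruple $a=0$, $\varphi_1$ constant and $(\varphi_2,\varphi_3)$ an in-plane infinitesimal rigid motion $(c_2-\omega x_3, c_3+\omega x_2)$, which is four-dimensional. The normalization $\fint_S\varphi = 0$ pins down the three translational degrees of freedom, while $\fint_S(\partial_3\varphi\cdot e_2 - \partial_2\varphi\cdot e_3) = 0$ evaluates on the rigid mode to $-2\omega$ and hence kills $\omega$. A Korn-type argument on this quotient, combined with the lower bound $Q_\el(G) \geq c|\sym G|^2$, yields strict convexity and coercivity, so unique minimizers exist and produce the two projection identities stated above (note that $\Psi_i \in \mathbb{H}_{\mathrm{micro}}$ by construction, and the first-order optimality conditions exactly say $\Psi_i \perp \mathbb{H}_{\mathrm{rel}}$, hence $\Psi_i \in \mathbb{H}_{\mathrm{macro}}$).

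The three formulas now follow by elementary linear algebra. Since $\{K_1,K_2,K_3\}$ is orthonormal in $\R^{3\times 3}_{\skewMat}$ for the Frobenius product (a direct check) and $K \mapsto \sym\compose{K\mathbf{\bar{x}}}{0}$ is linear, $\operatorname{P}_{\mathbb{H}_{\mathrm{macro}}}(\sym\compose{K\mathbf{\bar{x}}}{0}) = \sum_i (K\cdot K_i)\Psi_i$, and squaring the $Q$-norm yields $Q_\el^{\mathrm{eff}}(K) = \sum_{i,j}\mathbf{C}_{ij}(K\cdot K_i)(K\cdot K_j)$. The non-degeneracy of $Q_\el^{\mathrm{eff}}$ established in the preceding lemma forces $\{\Psi_i\}$ to be linearly independent, so $\mathbf{C}$ is invertible. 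Writing $K^{\mathrm{eff}}(z) = \sum_i\alpha_i K_i$ and inserting this into $\sum_i\alpha_i\Psi_i = \operatorname{P}_{\mathbb{H}_{\mathrm{macro}}}(z)$, then pairing with $\Psi_j$ gives the linear system $\sum_i\alpha_i\mathbf{C}_{ij} = (z,\Psi_j)_Q$, whose inversion produces the claimed formula. Finally, $z^{\mathrm{res}} = z - \operatorname{P}_{\mathbb{H}_{\mathrm{macro}}}(z) - \operatorname{P}_{\mathbb{H}_{\mathrm{rel}}}(z)$ together with the finite-dimensional projection formula $\operatorname{P}_{\mathbb{H}_{\mathrm{macro}}}(z) = \sum_{i,j}(z,\Psi_j)_Q(\mathbf{C}^{-1})_{ij}\Psi_i$ and the identification of $\operatorname{P}_{\mathbb{H}_{\mathrm{rel}}}(z)$ above yields the stated expression.

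The main obstacle I anticipate is the well-posedness step: correctly characterizing the four-dimensional kernel of $(a,\varphi)\mapsto\sym\compose{ae_1}{\bar\grad\varphi}$ and verifying that the two normalization constraints precisely trivialize it so that a Korn-type inequality delivers coercivity on the quotient. Once the projections $\Psi_i$ and $\operatorname{P}_{\mathbb{H}_{\mathrm{rel}}}(z)$ are identified, everything else reduces to routine linear algebra in the finite-dimensional subspace $\mathbb{H}_{\mathrm{macro}}$.
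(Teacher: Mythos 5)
Your proposal is correct and follows exactly the orthogonal-projection formalism in $(\mathbb{H},(\cdot,\cdot)_Q)$ with the decompositions $\mathbb{H}=\mathbb{H}_\mathrm{micro}\oplus\mathbb{H}_\mathrm{res}$, $\mathbb{H}_\mathrm{micro}=\mathbb{H}_\mathrm{macro}\oplus\mathbb{H}_\mathrm{rel}$ that the paper sets up before the proposition and for which it refers to \cite{BNS20,BGKN22-PP}; identifying $\Psi_i=\operatorname{P}_{\mathbb{H}_\mathrm{macro}}(\sym\compose*{K_i\mathbf{\bar{x}}}{0})$ and $\sym\compose*{a_ze_1}{\bar{\grad}\varphi_z}=-\operatorname{P}_{\mathbb{H}_\mathrm{rel}}(z)$ via the Euler--Lagrange conditions, checking the four-dimensional kernel and the two normalizations, and then reducing to Gram-matrix linear algebra is precisely the intended argument. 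The invertibility of $\mathbf{C}$ via the non-degeneracy of $Q_\el^\mathrm{eff}$ and the final bookkeeping for $z^\mathrm{res}$ are also handled correctly, so no changes are needed.
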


\section{Proofs} \label{Sec:Proofs}

This section is dedicated to the proof of \cref{Thm:MainResultConvergenceOfSolutions,Thm:MainResultConvergenceOfApproximateSolutions,Thm:EvolutionaryGammaConvergence}. We start with the proof of \cref{Thm:EvolutionaryGammaConvergence} with the compactness, lower bound and mutual recovery sequence statements discussed in separate subsections. We shall see that the compactness is due to the rigidity estimate of \cite{FJM02} and the lower bound statement follows from a careful Taylor expansion of $W_\el$ and $W_\pl$. The main difficulty is the mutual recovery sequence statement which requires a suitable choice for the ansatz as discussed in the introduction. Finally, we conclude \cref{Thm:MainResultConvergenceOfSolutions,Thm:MainResultConvergenceOfApproximateSolutions} from the previous observations using the theory established in \cite{MRS08}.

\subsection{Compactness; Proof of Theorem \ref{Thm:EvolutionaryGammaConvergence} (a)}
\label{Sec:Compactness}

This section is dedicated to the proof of \cref{Thm:EvolutionaryGammaConvergence} (a). The statement is essentially due to a combination of \cite[Thm.~2.1]{MM03} with \cite[Lem.~3.1]{MS13}. The second paper establishes the a priori estimate,
\begin{equation}\label{Eq:AprioriEstimateEnergy}
	h^{-2}\norm*{\dist\of[\big]{\grad[h]y^h, \SO(3)}}_{\Leb^2(\Omega)}^2 + \norm*{z^h}_{\Leb^2(\Omega)}^2 + h^2\norm*{z^h}_{\Leb^\infty(\Omega)}^2 \leq c \left( \mathcal{E}^h_{\el+\pl}(y^h,z^h) + 1\right),
\end{equation}
which already establishes most of \eqref{Eq:Coercivity}. The estimate follows essentially from \eqref{Ass:W_NonDegeneracy}, \eqref{Ass:WP_NonDegeneracy} and \cref{Rem:PlasticProperties}. Especially, we obtain that sequences $(y^h, z^h) \in \mathcal{Q}^h$ satisfying \eqref{Eq:BoundedEnergy} have finite bending energy in the sense that
\begin{equation}\label{Eq:FiniteBendingEnergy}
	\limsup_{h \to 0} h^{-2} \int_\Omega \dist^2\of[\big]{\grad[h]y^h, \SO(3)} < \infty.
\end{equation}
Then, the first paper establishes the convergences of $(y^h)$ for sequences with finite bending energy. However, we require stronger estimates for the mutual recovery sequence statement. We obtain those from the strain gradient terms in the energy. Thus, we are lead to reconstructing the arguments for the compactness. Our general procedure is as described above but we follow \cite{Neu12} whose argumentation slidly deviates from the one provided in \cite{MM03}. As in many similar contributions a crucial ingredient is the rigidity estimate \cite[Thm.~3.1]{FJM02}. We provide a version of the theorem, showing that the rotation can be chosen independent of the considered $\Leb^q$ space.

\begin{proposition}[cf.\ {\cite[Thm.~3.1]{FJM02}}] \label{Prop:RigidityEstimate}
Let $1 < q < \infty$ and $U \subset \R^3$ a bounded Lipschitz domain. There exists a constant $c > 0$, such that for all $\mathsf{y} \in \SobW^{1,\infty}(U, \R^3)$, we find a constant rotation $R \in \SO(3)$ independent of $q$, such that
\begin{equation} \label{Eq:RigidityEstimate}
	\norm*{\grad \mathsf{y} - R}_{\Leb^q(U)} \leq c \norm*{\dist(\grad \mathsf{y}, \SO(3))}_{\Leb^q(U)}.
\end{equation}
\end{proposition}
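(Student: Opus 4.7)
The plan is to apply the classical $L^2$ rigidity of \cite{FJM02} to fix a rotation $R \in \SO(3)$ that depends only on $\mathsf{y}$, and then to verify that this same $R$ satisfies the $L^q$-estimate for every $q \in (1,\infty)$. First I would apply \cite[Thm.~3.1]{FJM02} directly to $\mathsf{y}$ (which lies in $W^{1,2}(U,\R^3)$ since $U$ is bounded and $\mathsf{y} \in W^{1,\infty}$) to obtain $R \in \SO(3)$ with
\begin{equation*}
\norm*{\grad\mathsf{y} - R}_{\Leb^2(U)} \le c\norm*{\dist(\grad\mathsf{y}, \SO(3))}_{\Leb^2(U)}.
\end{equation*}
Crucially the construction of $R$ in \cite{FJM02} does not involve any norm exponent, so this $R$ is the candidate for the "$q$-independent" rotation in the statement.

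For fixed $q \in (1,\infty)$ I would then invoke the $L^q$-generalization of the FJM rigidity (obtained via a truncation argument, cf. the corollary to \cite[Thm.~3.1]{FJM02}) to produce an auxiliary rotation $R_q \in \SO(3)$ with $\norm{\grad\mathsf{y} - R_q}_{\Leb^q(U)} \le c_q \norm{\dist(\grad\mathsf{y},\SO(3))}_{\Leb^q(U)}$. Writing
\begin{equation*}
\norm*{\grad\mathsf{y} - R}_{\Leb^q(U)} \le \norm*{\grad\mathsf{y} - R_q}_{\Leb^q(U)} + \abs*{R - R_q}\, \abs*{U}^{1/q},
\end{equation*}
the task reduces to bounding $\abs{R-R_q}\,\abs{U}^{1/q}$ by a $q$-dependent multiple of $\norm{\dist}_{\Leb^q(U)}$. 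To this end I would estimate
\begin{equation*}
\abs{R-R_q}^2\abs*{U} = \norm*{R-R_q}_{\Leb^2(U)}^2 \le 2\norm*{\grad\mathsf{y}-R}_{\Leb^2(U)}^2 + 2\norm*{\grad\mathsf{y}-R_q}_{\Leb^2(U)}^2
\end{equation*}
and combine with the two rigidity estimates. For $q \ge 2$ this is direct: Jensen's inequality yields $\norm{f}_{\Leb^2(U)} \le \abs{U}^{1/2-1/q}\norm{f}_{\Leb^q(U)}$, which applied to $\dist(\grad\mathsf{y},\SO(3))$ and to $\grad\mathsf{y}-R_q$ produces $\abs{R-R_q}\,\abs{U}^{1/q} \lesssim \norm{\dist}_{\Leb^q(U)}$ and closes the argument.

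The main obstacle is the range $1 < q < 2$, where the embedding $\Leb^q(U) \hookrightarrow \Leb^2(U)$ fails. Here I would exploit the $W^{1,\infty}$-regularity of $\mathsf{y}$ through the interpolation $\norm{f}_{\Leb^2} \le \norm{f}_{\Leb^q}^{q/2}\norm{f}_{\Leb^\infty}^{1-q/2}$, applied both to $\dist(\grad\mathsf{y},\SO(3))$ and to $\grad\mathsf{y}-R_q$, using the uniform $\Leb^\infty$-bound $\norm{\grad\mathsf{y}-R_q}_{\Leb^\infty} \le \norm{\grad\mathsf{y}}_{\Leb^\infty} + \sqrt{3}$. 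Inserted into the estimate for $\abs{R-R_q}$, this produces a sub-linear dependence $\norm{\dist}_{\Leb^q(U)}^{q/2}$ which a priori is \emph{weaker} than the desired linear one; I expect this to be the delicate technical point, to be resolved by a case distinction between $\norm{\dist}_{\Leb^q(U)}$ being small (where both $R$ and $R_q$ approach the common rotation of a rigid motion and a refined comparison using the local polar decomposition is available) or comparable to the trivial bound $\norm{\grad\mathsf{y}-I}_{\Leb^\infty}\abs{U}^{1/q}$ (where the rigidity inequality is immediate from boundedness of $\SO(3)$ and absorbability of the sub-linear term). Together these two regimes yield the full $L^q$ rigidity estimate for the single rotation $R$.
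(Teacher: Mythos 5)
Your argument is complete only for $q\ge 2$; for $1<q<2$ there is a genuine gap, and you acknowledge it yourself without closing it. The interpolation route gives $\norm*{\dist(\grad\mathsf{y},\SO(3))}_{\Leb^2}\le\norm*{\dist}_{\Leb^q}^{q/2}\norm*{\dist}_{\Leb^\infty}^{1-q/2}$, i.e.\ a sublinear power of $\norm*{\dist}_{\Leb^q}$ with a prefactor involving $\norm*{\grad\mathsf{y}}_{\Leb^\infty}$; since the constant $c$ in the statement must be uniform over all $\mathsf{y}\in\SobW^{1,\infty}$, such $\mathsf{y}$-dependent quantities cannot be absorbed, and the sketched case distinction (``$\norm*{\dist}_{\Leb^q}$ small'' versus ``comparable to the trivial bound'') is not a proof — ``small'' is only meaningful relative to $\mathsf{y}$-dependent scales, and the phrase about both rotations ``approaching the common rotation of a rigid motion'' via a ``local polar decomposition'' is not substantiated. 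Moreover, the deeper problem is the choice of candidate rotation: if all you use about the $L^2$-FJM rotation $R$ is the $L^2$ estimate (and you use nothing about its construction), the approach cannot work for $q<2$. Take $\grad\mathsf{y}$ equal to a fixed rotation outside a set of measure $\eps$ and wildly deviating on it: the $L^2$ estimate only confines $R$ within $\sim\abs{U}^{-1/2}\norm*{\dist}_{\Leb^2}$ of the correct rotation, which for concentrated deviations is much larger than the target $\abs{U}^{-1/q}\norm*{\dist}_{\Leb^q}$ when $q<2$.

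The paper avoids all of this by anchoring the $q$-independent rotation not to the $L^2$ theorem but to the average of the gradient: take $R'\in\operatorname{Arg\,min}_{R\in\SO(3)}\abs*{\fint_U\grad\mathsf{y}-R}$, which is manifestly independent of $q$. Then, for each fixed $q$, compare with the rotation $R$ furnished by the known $L^q$ rigidity estimate: by minimality $\abs*{R'-\fint_U\grad\mathsf{y}}\le\abs*{R-\fint_U\grad\mathsf{y}}$, and by Jensen $\abs{U}^{1/q}\abs*{\fint_U\grad\mathsf{y}-R}\le\norm*{\grad\mathsf{y}-R}_{\Leb^q(U)}$, so the triangle inequality gives $\norm*{\grad\mathsf{y}-R'}_{\Leb^q}\le 3\norm*{\grad\mathsf{y}-R}_{\Leb^q}\le c\norm*{\dist(\grad\mathsf{y},\SO(3))}_{\Leb^q}$. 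Note that this only ever compares $\Leb^q$ quantities with the same $q$, so no interpolation between different Lebesgue exponents (the source of your obstruction) is needed; your $q\ge 2$ computation is essentially a more laborious version of this, and replacing your candidate $R$ by the projection of $\fint_U\grad\mathsf{y}$ onto $\SO(3)$ is the missing idea that makes the whole range $1<q<\infty$ go through.
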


\begin{proof}
We show that the rotation $R$ can be chosen independent of $q$. In fact we may use the (semi-) explicit choice $R' \in \operatorname{Arg\,min}_{R \in \SO(3)} \abs*{\fint_U \grad \mathsf{y} - R}$ (see \cite{NR24-PP}). Indeed, let $R \in \SO(3)$ satisfy \eqref{Eq:RigidityEstimate}. Then, 
\begin{align*}
	\norm*{\grad \mathsf{y} - R'}_{\Leb^q(U)} &\leq \norm*{\grad \mathsf{y} - R}_{\Leb^q(U)} + \abs*{U}^{1/q}\abs*{R' - \textstyle\fint_U \grad \mathsf{y}} + \abs*{U}^{1/q}\abs*{\textstyle\fint_U \grad\mathsf{y} - R} \\
	&\leq \norm*{\grad \mathsf{y} - R}_{\Leb^q(U)} + \abs*{U}^{1/q}\abs*{R - \textstyle\fint_U \grad \mathsf{y}} + \abs*{U}^{1/q}\abs*{\textstyle\fint_U \grad\mathsf{y} - R} \\
	&\leq 3 \norm*{\grad \mathsf{y} - R}_{\Leb^q(U)} \leq c_1 \norm*{\dist(\grad \mathsf{y}, \SO(3))}_{\Leb^q(U)}. \qedhere
\end{align*}
\end{proof}
\bigskip %% In combination with \qedhere

As a corollary we get the following rigidity statements for rods, which is adapted from \cite[Prop.~3.6]{Neu12}.

\begin{proposition} \label{Prop:RigidityRods}
Let $1 < q < \infty$ and $h_0 > 0$. There exists a constant $c > 0$, such that for all $y \in \SobW^{1,\infty}(\Omega, \R^3)$ and any $0 < h \leq h_0$, we find a rotation field $R \in \SobW^{1,\infty}(\omega, \SO(3))$, such that $K_R = R^T\partial_1 R$ is piece-wise constant with jump-set contained in $h\Z$ and
\begin{subequations}
\begin{align}
	\norm*{\grad[h] y - R}_{\Leb^q(\Omega)} + \norm*{h \partial_1 R}_{\Leb^q(\omega)} &\leq c \norm*{\dist\of[\big]{\grad[h]y, \SO(3)}}_{\Leb^q(\Omega)}, \label{Eq:RigidityRodsLq}\\
	\norm*{h \partial_1 R}_{\Leb^\infty(\omega)} & \leq c \norm*{\dist\of[\big]{\grad[h]y, \SO(3)}}_{\Leb^\infty(\Omega)}. \label{Eq:RigidityRodsLinf}
\end{align}
\end{subequations}
Moreover, $R$ can be chosen independently of $q$ and if $(y, z) \in \mathcal{Q}^h_{(v_\mathrm{bc}, R_\mathrm{bc})}$ for some $z \in \Leb^2(\Omega, \R^{3\times 3})$ and $(v_\mathrm{bc}, R_\mathrm{bc}) \in \R^3 \times \SO(3)$, then we may choose $R$, such that $R(0) = R_\mathrm{bc}$.
\end{proposition}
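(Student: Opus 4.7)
My plan is to follow the classical Mora--M\"uller-type strategy (cf.\ \cite{MM03,Neu12}): cover $\omega$ by overlapping slices of length of order $h$, apply \cref{Prop:RigidityEstimate} on each to extract a local rotation, and glue the resulting rotations into a global $\SobW^{1,\infty}$ field using piecewise-constant skew generators on intervals of $h\Z$. Concretely, I choose intervals $\{I_k\} \subset \omega$ with $|I_k|, |I_k \cap I_{k+1}| \sim h$ and bounded overlap; each $I_k \times S$ corresponds, after undoing the $h$-rescaling in the cross-section, to a fixed-shape cylinder in the physical rod $\Omega^h = \omega \times hS$, so the rigidity constant of \cref{Prop:RigidityEstimate} is $h$-independent. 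Applying that proposition yields $R_k \in \SO(3)$---selected via the $q$-independent minimizer rule highlighted in the proof of \cref{Prop:RigidityEstimate}---with
\[
\|\grad[h] y - R_k\|_{\Leb^q(I_k \times S)} \le c\, \|\dist(\grad[h] y, \SO(3))\|_{\Leb^q(I_k \times S)},
\]
the constant $c$ independent of $h$, $k$, $q$. The triangle inequality on the overlap $I_k \cap I_{k+1}$ then gives $h|R_{k+1}-R_k|^q \le c\, \|\dist(\grad[h]y, \SO(3))\|_{\Leb^q((I_k \cup I_{k+1}) \times S)}^q$.

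Next, I would exploit the bi-invariant Riemannian structure of $\SO(3)$: since its geodesic distance is globally equivalent to the chordal distance, one can always pick a skew matrix $K_k \in \R^{3\times 3}_{\skewMat}$ with $R_{k+1} = R_k \exp(h K_k)$ and $h|K_k| \le c|R_{k+1}-R_k|$. Setting $R(x_1) := R_k \exp\of*{(x_1-kh) K_k}$ on $[kh, (k+1)h]$ then produces a continuous $R \in \SobW^{1,\infty}(\omega, \SO(3))$ with $K_R = K_k$ piecewise constant on intervals between consecutive points of $h\Z$, together with the pointwise identity $|h \partial_1 R| = h|K_k|$. Summing $h|R_{k+1}-R_k|^q$ over $k$, via the bounded overlap of $\{I_k\}$, yields the $\Leb^q$-bound \eqref{Eq:RigidityRodsLq} on $h\partial_1 R$; taking $q=\infty$ in the same pointwise estimate gives \eqref{Eq:RigidityRodsLinf}; and the bound on $\grad[h]y - R$ follows from the triangle inequality with $R_k$ as intermediary, together with the pointwise estimate $|R(x_1) - R_k| \le c h|K_k|$ on each $[kh,(k+1)h]$. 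The boundary condition is handled by noting that the Dirichlet data force the last two columns of $\grad[h] y(0,\cdot)$ to equal those of $R_\mathrm{bc}$; hence the rotation $R_0$ obtained from rigidity on $I_0 \times S$ is already close to $R_\mathrm{bc}$ (after extending $y$ rigidly across $\{x_1 = 0\}$, if needed, so that \cref{Prop:RigidityEstimate} applies on a slightly enlarged first slice), so we may redefine $R_0 := R_\mathrm{bc}$ and absorb the resulting error into $c$.

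The main obstacle is the gluing step, namely producing $K_k$ that realizes the quantitative bound $h|K_k| \le c|R_{k+1}-R_k|$ \emph{globally}---including the case where $R_{k+1}$ is far from $R_k$ and the matrix logarithm would not be single-valued. The compactness of $\SO(3)$ combined with the global equivalence of Riemannian and chordal distances on this Lie group resolves this uniformly, without requiring any case distinction on the size of $|R_{k+1}-R_k|$.
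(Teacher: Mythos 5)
Your construction is essentially the one the paper relies on: it is the slicing argument of \cite[Prop.~3.6]{Neu12} (going back to \cite{MM03}) that the paper's proof simply cites -- local rigidity on slabs of length of order $h$ with an $h$-independent constant after rescaling, comparison of neighbouring rotations on the overlaps, geodesic interpolation so that $K_R$ is piecewise constant with jump set in $h\Z$, $q$-independence of $R$ through the selection rule of \cref{Prop:RigidityEstimate}, and the boundary condition via a rigid extension across $\{x_1=0\}$. All of these steps are sound, including the global bound $h\abs{K_k}\le c\abs{R_{k+1}-R_k}$ from the equivalence of geodesic and chordal distances on $\SO(3)$.

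The one step that does not work as written is the derivation of \eqref{Eq:RigidityRodsLinf} by ``taking $q=\infty$ in the same pointwise estimate''. The constant in \cref{Prop:RigidityEstimate} depends on $q$ and is not uniform as $q\to\infty$ (geometric rigidity fails at the endpoint $q=\infty$), so you can neither invoke the local rigidity estimate with $q=\infty$ nor let $q\to\infty$ in the overlap estimate $h\abs{R_{k+1}-R_k}^q\le c_q\norm{\dist(\grad[h]y,\SO(3))}_{\Leb^q((I_k\cup I_{k+1})\times S)}^q$, since $c_q$ may blow up. The correct (and easy) route, which is exactly how the paper argues, is to keep a fixed exponent, say $q=2$: each slab $(I_k\cup I_{k+1})\times S$ has measure of order $h$, hence $\norm{\dist(\grad[h]y,\SO(3))}_{\Leb^2(\mathrm{slab})}\le (ch)^{1/2}\norm{\dist(\grad[h]y,\SO(3))}_{\Leb^\infty(\Omega)}$, and combining this with $h^{1/2}\abs{R_{k+1}-R_k}\le c\norm{\dist(\grad[h]y,\SO(3))}_{\Leb^2(\mathrm{slab})}$ gives $h\abs{K_k}\le c\abs{R_{k+1}-R_k}\le c\norm{\dist(\grad[h]y,\SO(3))}_{\Leb^\infty(\Omega)}$, i.e.\ \eqref{Eq:RigidityRodsLinf} (on boundary slabs use the extension $y_*$, whose added defect is controlled by the original one, as in the paper). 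With this one-line replacement your proof coincides with the paper's.
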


\begin{proof}
Repeating the arguments for \cite[Prop.~3.6]{Neu12}, which already establishes \eqref{Eq:RigidityRodsLq} in the case $q = 2$, one can show that \eqref{Eq:RigidityRodsLq} holds for arbitrary $1 < q < \infty$. What is more, since $R$ is constructed using the rigidity estimate, $R$ can be chosen independently of $q$ by \cref{Prop:RigidityEstimate}. Especially, note that from the procedure one can establish the inequality
\begin{equation*}
	\sup_{x_1 \in (\xi, \xi+h)} \abs*{h\partial_1 R(x_1)}^2 \leq c_1 h^{-1} \norm*{\dist\of[\big]{\grad[h]y_*, \SO(3)}}_{\Leb^2([\xi-h,\xi+h)\times S)}^2, \qquad 
	\xi \in h\Z \cap [-h,l+h],
\end{equation*}
where $y^*$ is an extension of $y$ to $(-2h,l+2h) \times S$ with
\begin{equation*}
	\norm*{\dist\of[\big]{\grad[h] y_*, \SO(3)}}_{\Leb^2((-2h,0) \cup (l,l+2h) \times S)} \leq c_2 \norm*{\dist\of[\big]{\grad[h] y, \SO(3)}}_{\Leb^2((0,h) \cup (l-h,l) \times S)}.
\end{equation*}
From this, we obtain \eqref{Eq:RigidityRodsLinf} by comparing the $\Leb^2$ norm against the $\Leb^\infty$ norm. \cite[Prop.~3.6]{Neu12} also shows that one may choose $R$, such that $R(0) = R_\mathrm{bc}$, if $(y,z) \in \mathcal{Q}^h_{(v_\mathrm{bc}, R_\mathrm{bc})}$.
\end{proof}

We proceed by proving the a priori estimate \eqref{Eq:Coercivity}. By \eqref{Eq:AprioriEstimateEnergy} it remains to show the estimate for $y^h - \fint_S y^h(0,\placeholder)$. This is due to the following version of the Poincaré inequality. We show a stronger version than necessary for \eqref{Eq:Coercivity}, which we use in the proposition afterwards.

\begin{lemma}[Poincaré-Morrey] \label{Lem:PoincareMorrey}
Let $1 \leq q < \infty$. We find a constant $c > 0$, such that for all $y \in \SobW^{1,q}(\Omega)$ we have
\begin{equation}
	\norm*{y - {\textstyle \fint_S} y(0,\placeholder)}_{\Leb^q(\Omega)} \leq c \left( \norm*{\partial_1 y}_{\Leb^1(\Omega)} + \norm*{\bar{\grad} y}_{\Leb^q(\Omega)}\right).
\end{equation}
\end{lemma}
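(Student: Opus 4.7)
The plan is to split the target quantity by adding and subtracting the slice-average $\fint_S y(x_1,\bar{x}')\,\mathrm{d}\bar{x}'$, so that we obtain two pieces: a transverse fluctuation controlled by $\bar{\nabla} y$, and a longitudinal mean drift controlled by $\partial_1 y$. Concretely, I would write
\begin{equation*}
	y(x_1,\bar{x}) - {\textstyle\fint_S} y(0,\placeholder) = \underbrace{\big(y(x_1,\bar{x}) - {\textstyle\fint_S} y(x_1,\placeholder)\big)}_{=: A(x_1,\bar{x})} + \underbrace{\big({\textstyle\fint_S} y(x_1,\placeholder) - {\textstyle\fint_S} y(0,\placeholder)\big)}_{=: g(x_1)}.
\end{equation*}

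For the first term I would invoke the standard Poincaré inequality on the Lipschitz cross-section $S$: there is $c_S>0$ such that for a.e.\ $x_1 \in \omega$,
\begin{equation*}
	\|A(x_1,\placeholder)\|_{\Leb^q(S)} \leq c_S \|\bar{\grad} y(x_1,\placeholder)\|_{\Leb^q(S)}.
\end{equation*}
Raising to the $q$-th power and integrating over $\omega$ yields $\|A\|_{\Leb^q(\Omega)} \leq c_S \|\bar{\grad} y\|_{\Leb^q(\Omega)}$.

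For the second term I note that $g \in \SobW^{1,1}(\omega)$ with $g(0)=0$ and $g'(x_1) = \fint_S \partial_1 y(x_1,\placeholder)$ for a.e.\ $x_1$. Hence by the fundamental theorem of calculus and Jensen,
\begin{equation*}
	|g(x_1)| \leq \int_0^{x_1} \Big|{\textstyle\fint_S} \partial_1 y(s,\placeholder)\Big|\,\mathrm{d}s \leq \tfrac{1}{|S|} \|\partial_1 y\|_{\Leb^1(\Omega)}.
\end{equation*}
Since $g$ depends only on $x_1$, its $\Leb^q$-norm over $\Omega$ satisfies $\|g\|_{\Leb^q(\Omega)} \leq |\Omega|^{1/q} \|g\|_{\Leb^\infty(\omega)} \leq c\|\partial_1 y\|_{\Leb^1(\Omega)}$.

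Adding the two estimates gives the claim with $c = c_S + |\Omega|^{1/q}/|S|$. No real obstacle is expected; the only subtlety is to make sure $g$ is well-defined as an absolutely continuous function (using Fubini and that $y \in \SobW^{1,q}(\Omega) \subset \SobW^{1,1}(\Omega)$), which is routine. The mild asymmetry in the right-hand side (a $\Leb^1$ bound on $\partial_1 y$ versus an $\Leb^q$ bound on $\bar{\grad}y$) appears naturally from this argument because the longitudinal difference is first passed through an $\Leb^\infty$ bound in $x_1$ obtained via the one-dimensional fundamental theorem of calculus.
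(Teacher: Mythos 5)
Your proposal is correct and follows essentially the same argument as the paper: split off the slice average $\bar{y}(x_1)=\fint_S y(x_1,\placeholder)$, control the transverse fluctuation by the cross-sectional Poincar\'e inequality, and control the one-dimensional mean drift via the fundamental theorem of calculus and an $\Leb^\infty$--$\Leb^1$ bound. No gaps; the routine verification that $\bar y$ is absolutely continuous with $\bar y'=\fint_S\partial_1 y$ is exactly the point the paper also treats implicitly.
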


\begin{proof}
In the proof we follow \cite[Prop.~3.6]{Neu12}. Consider $\bar{y}(x_1) := \fint_S y(x_1,\bar{x})\,\dd \bar{x}$, $x_1 \in \omega$. Then, Poincaré's inequality yields
\begin{equation*}
	\norm*{y - \bar{y}}_{\Leb^q(\Omega)}^q = \int_\omega \norm*{y(x_1,\placeholder) - {\textstyle \fint_S}y(x_1,\placeholder)}_{\Leb^q(S)}^q \,\dd x_1 \leq c_1 \int_\omega \norm*{\bar{\grad} y(x_1,\placeholder)}_{\Leb^q(S)}^q \,\dd x_1 = c_1 \norm*{\bar{\grad} y}_{\Leb^q(\Omega)}^q.
\end{equation*}
Moreover, in one dimension the continuous representative satisfies $\bar{y}(x_1) - \bar{y}(0) = \int_0^{x_1} \partial_1 \bar{y}$. Thus, we can conclude the claim with the estimate,
\begin{equation*}
	\norm*{\bar{y} - \bar{y}(0)}_{\Leb^\infty(\omega)} \leq \norm*{\partial_1 \bar{y}}_{\Leb^1(\omega)} \leq \abs*{S}^{-1} \norm*{\partial_1 y}_{\Leb^1(\Omega)}.
	\qedhere
\end{equation*}
\end{proof}
\bigskip %% In combination with \qedhere

Now, \cref{Thm:EvolutionaryGammaConvergence} (a) is an immediate consequence of the following proposition, also in the variant with boundary conditions. 
The proposition shows that 3D deformation with finite energy in the sense of \eqref{Eq:BoundedEnergy} can be represented by means of a standard Cosserat ansatz with a small remainder $\phi^h$, see \eqref{eq:cosserat} below. The proposition also establishes estimates that we need for the construction of the mutual recovery sequence. In particular, using the regularizing properties of the strain gradient terms, the proposition yields smallness of the remainder $\phi^h$ in $L^q$ for large $q$.

\begin{proposition} \label{Prop:Compactness}
Let $(v_\mathrm{bc}, R_\mathrm{bc}) \in \R^3 \times \SO(3)$ and $(y^h, z^h) \subset \mathcal{Q}^h$ 
having bounded energy, i.e.\ satisfying \eqref{Eq:BoundedEnergy}. Then, $y^h$ admits a representation,
\begin{equation}\label{eq:cosserat}
	y^h(x) = v^h(x_1) + hR^h(x_1) \mathbf{\bar{x}}(x) + h\phi^h(x), \qquad x \in \Omega,
\end{equation}
for some rod configuration $(v^h, R^h) \in \mathcal{A}_\mathrm{rod}$ and $\phi^h \in \SobW^{1,\infty}(\Omega, \R^3)$, such that the following statements hold.
\begin{enumerate}[(a)]
	\item We have $v^h(0) = \fint_S y^h(0, \bar{x}) \,\dd \bar{x}$ and $\fint_S \phi^h(0,\bar{x}) \,\dd\bar{x} = 0$. Moreover, if $(y^h, z^h) \in \mathcal{Q}^h_{(v_\mathrm{bc}, R_\mathrm{bc})}$, then $v^h(0) = v_\mathrm{bc}$, $R^h(0) = R_\mathrm{bc}$ and $\phi^h(0, \bar{x}) = 0$ for a.e.\ $\bar{x} \in S$.
	
	\item We have $I + hz^h \in K_\pl$ a.e.\ in $\Omega$ for all $0 < h \ll 1$.

	\item For all $q \in [1,\infty]$ and $\tilde{q} \in [1,\infty)$, we have
	\begin{subequations}
	\begin{alignat}{10}
		&\limsup_{h \to 0}\; && && \norm*{z^h}_{\Leb^2(\Omega)} &&+ h && \norm*{z^h}_{\Leb^\infty(\Omega)} && && &&< \infty, \label{Eq:CompactnessZhBound}\\
		&\limsup_{h \to 0}\; && h^{-\alpha_C} && \norm[\Big]{\grad \sqrt{C_{y^h}}}_{\Leb^p(\Omega)} &&+ h^{\alpha_R} && \norm*{\grad R_{y^h}}_{\Leb^p(\Omega)} &&+ h^{\alpha_R} && \norm*{\grad \grad[h] y^h}_{\Leb^p(\Omega)} &&< \infty, \label{Eq:CompactnessStrainGradientsBound}\\
		&\limsup_{h \to 0}\; && h^{-\alpha_q} && \norm[\Big]{\sqrt{C_{y^h}} - I}_{\Leb^q(\Omega)} &&+h^{-\alpha_{\tilde{q}}} && \norm*{\grad[h]y^h - R^h}_{\Leb^{\tilde{q}}(\Omega)} &&+ h^{1-\alpha_q} &&\norm*{\partial_1 R^h}_{\Leb^q(\omega)} && < \infty, \label{Eq:CompactnessStrainBound} \\
		&\limsup_{h \to 0}\; && h^{1-\alpha_q^*} && \norm*{\phi^h}_{\Leb^q(\Omega)} &&+ h^{1-\alpha_{\tilde{q}}} && \norm*{\grad[h]\phi^h}_{\Leb^{\tilde{q}}(\Omega)} && && &&< \infty, \label{Eq:CompactnessRestBound}
	\end{alignat}
	\end{subequations}
	where 
	$\alpha_q := \left\{\begin{smallmatrix*}[l] 1 & q \leq 2, \\ \alpha_C + 2\frac{1 - \alpha_C}{q} & q \in (2,\infty), \\ \alpha_C & q = \infty\end{smallmatrix*}\right.$ and 
	$\alpha_q^* = 1$ for $q < \infty$ and $\alpha_q^* < \alpha_3$ arbitrary for $q = \infty$.
	
	\item We find $(v,R,z) \in \mathcal{Q}^0$, $\phi \in \SobH^1(\omega, \R^3)$ and $\psi \in \Leb^2(\omega, \SobH^1(S,\R^3))$, such that up to a subsequence (not relabeled),
	\begin{subequations} \label{Eq:ConvergencesFromCompactnessAdvanced}
	\begin{align}
		z^h & \wto z && \text{in } \Leb^2(\Omega, \R^{3\times 3}), \label{Eq:CompactnessZhConvergence} \\
		v^h - v^h(0) & \wto v - v(0) && \text{in } \SobH^2(\omega, \R^3), \\
		R^h & \wto R && \text{in } \SobH^1(\omega, \R^{3\times 3}), \\
		\phi^h &\wto \phi && \text{in } \SobH^1(\Omega, \R^3), \label{Eq:CompactnessRestConvergence}\\
		\grad[h] \phi^h &\wto \compose{\partial_1\phi}{\bar{\grad} \psi} && \text{in } \Leb^2(\Omega, \R^{3\times 3}).
	\end{align}
	\end{subequations}
	Moreover, if $(y^h, z^h) \in \mathcal{Q}^h_{(v_\mathrm{bc}, R_\mathrm{bc})}$, then $(v,R,z) \in \mathcal{Q}^0_{(v_\mathrm{bc}, R_\mathrm{bc})}$.
\end{enumerate}
\end{proposition}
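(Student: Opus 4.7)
The plan is to construct the rod configuration $(v^h, R^h)$ from $y^h$ using the rigidity estimate of \cref{Prop:RigidityRods}, which produces $R^h \in \SobW^{1,\infty}(\omega,\SO(3))$ (with $R^h(0) = R_\mathrm{bc}$ under boundary conditions) together with $\Leb^q$-control of $\grad[h] y^h - R^h$ and $h\partial_1 R^h$ by $\dist(\grad[h] y^h, \SO(3))$. I then set $v^h(x_1) := v^h(0) + \int_0^{x_1} R^h e_1$ with $v^h(0) := \fint_S y^h(0,\bar{x})\,\dd\bar{x}$ (respectively $v^h(0) := v_\mathrm{bc}$) and define $\phi^h := h^{-1}(y^h - v^h - h R^h\mathbf{\bar{x}})$. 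This produces the decomposition \eqref{eq:cosserat} by construction, while the centering assumption \eqref{Eq:ReferenceDomainCentered} together with $R^h(0) = R_\mathrm{bc}$ yields the stated normalizations of $\phi^h(0,\bar{x})$ in (a). Part (b) is immediate: finiteness of $\mathcal{E}^h_\pl$ forces $W_\pl(I+hz^h) < \infty$ almost everywhere, and by \cref{Def:MaterialClass_WP} this means $I + hz^h \in K_\pl$ a.e.\ for $h$ small enough.

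For the quantitative bounds in (c), the a priori estimate \eqref{Eq:AprioriEstimateEnergy} from \cite{MS13} yields \eqref{Eq:CompactnessZhBound}. The growth condition \eqref{Ass:HE_Growth} applied to the two strain-gradient energies combined with \eqref{Eq:BoundedEnergy} gives $\norm{\grad \sqrt{C_{y^h}}}_{\Leb^p(\Omega)} \leq c\, h^{\alpha_C}$ and $\norm{\grad R_{y^h}}_{\Leb^p(\Omega)} \leq c\, h^{-\alpha_R}$. The bound on $\grad\grad[h] y^h$ follows from the product rule applied to $\grad[h] y^h = R_{y^h}\sqrt{C_{y^h}}$, using that $R_{y^h}$ is $\SO(3)$-valued and that $\sqrt{C_{y^h}}$ is bounded in $\Leb^\infty$ via the Morrey embedding $\SobW^{1,p}\hookrightarrow \Leb^\infty$ (here $p > 3$ is essential), the latter being established through a Poincar\'e--Wirtinger argument from the $\Leb^p$-bound on $\grad \sqrt{C_{y^h}}$. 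For \eqref{Eq:CompactnessStrainBound}, the key identity is $\abs{\sqrt{C_{y^h}} - I} = \dist(\grad[h] y^h, \SO(3))$: the elastic non-degeneracy \eqref{Ass:W_NonDegeneracy} gives a $\Leb^2$-bound of order $h$, while the $\SobW^{1,p}$-control of $\sqrt{C_{y^h}}$ plus Morrey gives an $\Leb^\infty$-bound of order $h^{\alpha_C}$, and Lebesgue interpolation between these endpoints produces exactly the exponent $\alpha_q$. The bounds on $\grad[h] y^h - R^h$ and $\partial_1 R^h$ then come directly from \cref{Prop:RigidityRods} at the matching integrability.

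For the remainder $\phi^h$ in \eqref{Eq:CompactnessRestBound}, differentiating \eqref{eq:cosserat} yields
\begin{equation*}
	h\grad[h]\phi^h = \grad[h] y^h - R^h - \compose{h(\partial_1 R^h)\mathbf{\bar{x}}}{0},
\end{equation*}
so the $\Leb^{\tilde q}$-bound on $\grad[h]\phi^h$ reduces to the preceding rigidity estimates. The $\Leb^q$-bound on $\phi^h$ itself follows from \cref{Lem:PoincareMorrey} combined with the boundary normalization from part (a), which for $q < \infty$ gives $\norm{\phi^h}_{\Leb^q}\leq c$, matching $\alpha_q^* = 1$. The $\Leb^\infty$ case is borderline: the preceding estimates give $\norm{\grad \phi^h}_{\Leb^q} \leq c\, h^{\alpha_q - 1}$ for any $q \in (3,\infty)$, so the Morrey embedding $\SobW^{1,q}\hookrightarrow \Leb^\infty$ for some $q > 3$ close to $3$ yields $\norm{\phi^h}_{\Leb^\infty} \leq c\, h^{\alpha_q - 1}$; the strict inequality $\alpha_q^* < \alpha_3$ leaves exactly enough room to select $q > 3$ with $\alpha_q \geq \alpha_q^*$, giving the required scaling. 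The convergences in part (d) then follow by standard weak compactness in reflexive spaces: $R^h$ is bounded in $\SobH^1 \cap \Leb^\infty$ taking values in $\SO(3)$, so a subsequence converges weakly in $\SobH^1$ with limit $R$ still in $\SO(3)$ almost everywhere; passing to the limit in $\partial_1 v^h = R^h e_1$ places $(v,R) \in \mathcal{A}_\mathrm{rod}$, and traces transfer the boundary conditions.

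The main obstacle is not any isolated nonlinear step but the careful bookkeeping of scales: the two strain-gradient energies contribute with competing exponents $h^{\alpha_C}$ and $h^{-\alpha_R}$, and these have to be combined consistently through the Morrey embedding and Lebesgue interpolation to produce the precise exponents $\alpha_q$ and $\alpha_q^*$ stated. The most delicate point is the borderline $\Leb^\infty$-estimate for $\phi^h$, where the strict inequality $\alpha_q^* < \alpha_3$ is essential; the structural assumption $\alpha_R < \tfrac{2}{3}(1-\alpha_C)$ is engineered precisely so that the resulting regularity of $\phi^h$ suffices for the mutual recovery sequence construction carried out later in the paper.
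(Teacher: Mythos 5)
Your proposal is correct and follows essentially the same route as the paper: the rod rigidity of \cref{Prop:RigidityRods}, the explicit definitions $v^h(x_1)=v^h(0)+\int_0^{x_1}R^he_1$ and $\phi^h=h^{-1}(y^h-v^h-hR^h\mathbf{\bar{x}})$, the a priori estimate of \cite{MS13} for \eqref{Eq:CompactnessZhBound}, the growth condition plus the polar decomposition for \eqref{Eq:CompactnessStrainGradientsBound}, the Poincar\'e/Morrey $\Leb^\infty$-bound on $\sqrt{C_{y^h}}-I$ interpolated against the $\Leb^2$-bound to produce $\alpha_q$, \cref{Lem:PoincareMorrey} for $\phi^h$, and the borderline $\Leb^\infty$-estimate via Morrey with $q>3$ close to $3$ exploiting $\alpha_q^*<\alpha_3$. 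The only step you compress is the identification of the weak limit of $\grad[h]\phi^h$ as $\compose{\partial_1\phi}{\bar{\grad}\psi}$ with $\psi\in\Leb^2(\omega,\SobH^1(S,\R^3))$, which is not bare weak compactness but the standard auxiliary argument with $\psi^h:=h^{-1}\big(\phi^h-\fint_S\phi^h(\placeholder,\bar{x})\,\dd\bar{x}\big)$ and an integration by parts, as spelled out at the end of the paper's proof following \cite{MM03,Neu12}.
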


\begin{proof}
From \eqref{Eq:BoundedEnergy}, we immediately obtain,
\begin{alignat*}{5}
	(*_1)\;\;\limsup_{h \to 0}\, &h^{-2} \int_\Omega W_\el\big(\grad[h]y^h (I + hz^h)^{-1}\big) &&< \infty, &&\qquad
	(*_2)\;\;\limsup_{h \to 0}\, &h^{-2} \int_\Omega W_\pl\big(I + hz^h\big) &&< \infty, \\
	(*_3)\;\;\limsup_{h \to 0}\, &h^{-\alpha_Cp} \int_\Omega H^C_\el\big(\grad[h]\sqrt{C_{y^h}}\big)&& < \infty, &&\qquad
	(*_4)\;\;\limsup_{h \to 0}\, &h^{\alpha_Rp} \int_\Omega H^R_\el\big(\grad[h]R_{y^h}\big)&& < \infty.
\end{alignat*}
$(*_2)$ implies (b) by \cref{Def:MaterialClass_WP} and the a priori estimate \eqref{Eq:AprioriEstimateEnergy} yields \eqref{Eq:CompactnessZhBound} and that $(y^h)$ has finite bending energy in the sense of \eqref{Eq:FiniteBendingEnergy}, i.e.
\begin{equation*} \tag{$*_5$}
	\limsup_{h \to 0} h^{-1}\norm*{\sqrt{C_{y^h}} - I}_{\Leb^2(\Omega)} = \limsup_{h \to 0} h^{-1}\norm*{\dist\of[\big]{\grad[h]y^h, \SO(3)}}_{\Leb^2(\Omega)} < \infty.
\end{equation*}
$(*_3)$ and $(*_4)$ yield \eqref{Eq:CompactnessStrainGradientsBound} in view of the growth condition \eqref{Ass:HE_Growth} and the polar decompositon $\grad[h]y^h = R_{y^h}\sqrt{C_{y^h}}$. Now since $p > 3$, from Morrey's and Poincaré's inequality, combined with $(*_5)$ and \eqref{Eq:CompactnessStrainGradientsBound}, we conclude the $\Leb^\infty$ bound
\begin{align*}
	\limsup_{h \to 0} h^{-\alpha_C}\norm*{\sqrt{C_{y^h}} - I}_{\Leb^\infty(\Omega)}
	&\leq \limsup_{h \to 0} h^{-\alpha_C}\left(\norm*{\sqrt{C_{y^h}} - {\textstyle\fint_\Omega \sqrt{C_{y^h}}}}_{\Leb^\infty(\Omega)} + \abs*{{\textstyle\fint_\Omega \sqrt{C_{y^h}}} -I}\right) \\
	&\leq c_1 \limsup_{h \to 0} h^{-\alpha_C} \left(\norm*{\grad \sqrt{C_{y^h}}}_{\Leb^p(\Omega)} + \norm*{\sqrt{C_{y^h}} -I}_{\Leb^2(\Omega)}\right) < \infty.
\end{align*}
Note that for $2 < q < \infty$, $\alpha_q$ is defined such that $\alpha_q q = \alpha_C(q - 2) + 2$. Thus, we have the trivial inequality
\begin{equation*}
	h^{-\alpha_q q}\norm*{\sqrt{C_{y^h}} - I}_{\Leb^q(\Omega)}^q
	\leq \left(h^{-\alpha_C}\norm*{\sqrt{C_{y^h}} - I}_{\Leb^\infty(\Omega)}\right)^{q-2} \left(h^{-1}\norm*{\sqrt{C_{y^h}} - I}_{\Leb^2(\Omega)}\right)^2,
\end{equation*}
which establishes
\begin{equation*} \tag{$*_6$}
	\limsup_{h \to 0} h^{-\alpha_q}\norm*{\dist\of[\big]{\grad[h]y^h, \SO(3)}}_{\Leb^q(\Omega)} = \limsup_{h \to 0} h^{-\alpha_q}\norm*{\sqrt{C_{y^h}} - I}_{\Leb^q(\Omega)} < \infty.
\end{equation*}
Note that for $1\leq q < 2$, $(*_6)$ follows immediately from $(*_5)$ by Hölder's inequality. Let us now define the rotation field $R^h$ according to \cref{Prop:RigidityRods}. Then, \eqref{Eq:CompactnessStrainBound} follows immediately from $(*_6)$. Following \cite[Prop.~3.6]{Neu12} we define
\begin{align*}
	v^h(x_1) &:= \fint_S y^h(0,\bar{x}) \,\dd \bar{x} + \int_0^{x_1} R^h(s)e_1 \,\dd s, \\
	\phi^h(x) &:= h^{-1} \Big( y^h(x) - v^h(x_1) - hR^h(x_1)\mathbf{\bar{x}}(x) \Big).
\end{align*}
Then, $(v^h, R^h) \in \mathcal{A}_\mathrm{rod}$, (a) is satisfied by the centering of $S$, see \eqref{Eq:ReferenceDomainCentered}, and a quick calculation shows $\grad[h]\phi^h = h^{-1}(\grad[h]y^h - R^h) - \compose*{\partial_1 R^h \mathbf{\bar{x}}}{0}$. From this, we conclude $\phi^h \in \SobW^{1,\infty}$, since $y^h \in \SobW^{2,p}(\Omega, \R^3)$ and $R^h \in \SobW^{1,\infty}(\Omega, \R^{3\times 3})$. Moreover, \eqref{Eq:CompactnessStrainBound} yields,
\begin{equation*} \tag{$*_7$}
	\limsup_{h \to 0}\, h^{1-\alpha_{\tilde{q}}} \norm*{\grad[h]\phi^h}_{\Leb^{\tilde{q}}(\Omega)} < \infty.
\end{equation*}
It remains to show the first part of \eqref{Eq:CompactnessRestBound}. We start with the case $1 \leq q < \infty$. Since $\fint_S \phi^h(0,\bar{x}) \,\dd \bar{x} = 0$, \cref{Lem:PoincareMorrey} implies
\begin{equation*}
	\norm*{\phi^h}_{\Leb^q(\Omega)} \leq c_3 \left(\norm*{\partial_1 \phi^h}_{\Leb^1(\Omega)} + \norm*{\bar{\grad}\phi^h}_{\Leb^q(\Omega)}\right) \leq c_3 \left(\norm*{\grad[h] \phi^h}_{\Leb^1(\Omega)} + h\norm*{\grad[h]\phi^h}_{\Leb^q(\Omega)}\right).
\end{equation*}
Thus, \eqref{Eq:CompactnessRestBound} follows from $(*_7)$. The case $q = \infty$ now can be obtained from the fact that Poincaré's and Morrey's inequality show that for any $\bar{q} > 3$,
\begin{equation*}
	\norm*{\phi^h}_{\Leb^\infty(\Omega)} \leq c_4 \left( \norm*{\grad[h]\phi^h}_{\Leb^{\bar q}(\Omega)} + \norm*{\phi^h}_{\Leb^1(\Omega)} \right).
\end{equation*}
The statement (d) follows immediately from the boundedness of the sequences in question, which we established in (c). The only remaining part is the identification of the limit of the sequence $\grad[h]\phi^h$. The proof is analogous to the proof of \cite[Thm.~3.1(i)]{MM03} and \cite[Thm.~3.5(a)]{Neu12}. For the readers convenience we provide the arguments here. From \eqref{Eq:CompactnessRestBound} we know that $(\grad[h]\phi^h)$ converges (up to a subsequence) weakly in $\Leb^2(\Omega, \R^{3\times 3})$. Moreover, since $\grad[h] = \compose*{\partial_1}{\frac{1}{h}\bar{\grad}}$, \eqref{Eq:CompactnessRestBound} implies that the limit is of the form $\compose*{\partial_1 \phi}{d}$ for some $d \in \Leb^2(\Omega, \R^{3 \times 2})$. Let $\bar{\phi}^h: \omega \to \R^3, \bar{\phi}^h(x_1) := \fint_S \phi^h(x_1,\bar{x})\,\dd \bar{x}$ and $\psi^h := h^{-1}(\phi^h - \bar{\phi}^h)$. Then, we can write $d$ as the weak limit of $(\bar{\grad} \psi^h)$ in $\Leb^2(\Omega, \R^{3\times 2})$. Moreover, the proof of \cref{Lem:PoincareMorrey} shows
\begin{equation*}
	\limsup_{h \to 0} \norm*{\psi^h}_{\Leb^2(\Omega)} = \limsup_{h \to 0} h^{-1}\norm*{\phi^h - \bar{\phi}^h}_{\Leb^2(\Omega)} \leq c_5\limsup_{h \to 0} h^{-1}\norm*{\bar{\grad} \phi^h}_{\Leb^2(\Omega)} < \infty.
\end{equation*}
Thus, $(\psi^h)$ converges (up to a subsequence) to some $\psi \in \Leb^2(\Omega, \R^3)$. It follows that $\psi \in \Leb^2(\omega, \SobH^1(S,\R^3))$ and $d = \bar{\grad}\psi$, since
\begin{equation*}
	\int_\Omega d_i \varphi  = \lim_{h \to 0} \int_\Omega \partial_{i+1}\psi^h \varphi = \lim_{h \to 0} \int_\Omega \psi^h \partial_{i+1}\varphi = \int_\Omega \psi \partial_{i+1}\varphi \qquad
	\text{for all } \varphi \in \Cont^\infty_c(\Omega),\; i=1,2.
\qedhere
\end{equation*}
\end{proof}
\bigskip %% In combination with \qedhere

\begin{remark} \label{Rem:alpha_q}~
\begin{enumerate}[(i)]
	\item The ratio behind the definition of $\alpha_q$ in \cref{Prop:Compactness} is the following: As shown in the proof of \cref{Prop:Compactness}, equi-boundedness of the energy in form of
	\eqref{Eq:BoundedEnergy} yields (thanks to the geometric rigidity estimate \cite{FJM02} and the presence of the strain gradient term $H_\el^C$) the bound
	\begin{equation*}
		\limsup_{h \to 0} \left(h^{-1} \norm*{\dist\of[\big]{\grad[h]y^h, \SO(3)}}_{\Leb^2(\Omega)} + h^{-\alpha_C} \norm*{\dist\of[\big]{\grad[h]y^h, \SO(3)}}_{\Leb^\infty(\Omega)}\right) < \infty.
	\end{equation*}
	Combined with an interpolation estimate between $\Leb^2$ and $\Leb^\infty$ we obtain for all $q \in [1,\infty]$ the bound
	\begin{equation*}
		\limsup_{h \to 0} h^{-\alpha_q} \norm*{\dist\of[\big]{\grad[h]y^h, \SO(3)}}_{\Leb^q(\Omega)} < \infty.
	\end{equation*}
	
	\item \eqref{Eq:CompactnessRestBound} clearly implies $\lim_{h \to 0} h^{1-\delta} \norm*{\phi^h}_{\Leb^\infty(\Omega)} = 0$ for any $\delta < \alpha_q$ with $q=3$. This is a critical estimate in the construction of the mutual recovery sequence. In particular, thanks to the assumption $\alpha_R < \frac{2}{3}(1 - \alpha_C)$ we have  $\alpha_C + \alpha_R < \alpha_3$ and thus,
	\begin{equation*}
		\lim_{h \to 0} h^{1-\alpha_c - \alpha_R} \norm*{\phi^h}_{\Leb^\infty(\Omega)} \to 0.
	\end{equation*}
\end{enumerate}
\end{remark}

\subsection{Lower bound; Proof of Theorem \ref{Thm:EvolutionaryGammaConvergence} (b)}
\label{Sec:LowerBound}

In this section we show \cref{Thm:EvolutionaryGammaConvergence} (b). \cite{MS13} already establishes the lower bound for the plastic part, i.e.\
\begin{align*}
	\liminf_{h \to 0} \mathcal{E}^h_\pl(z^h) &\geq \mathcal{E}^0_\pl(z), \\
	\liminf_{h \to 0} \mathcal{D}^h(z^h, \hat{z}^h) &\geq \mathcal{D}^0(z, \hat{z}),
\end{align*}
provided $z^h \wto z$ and $\hat{z}^h \wto \hat{z}$ in $\Leb^2(\Omega, \R^{3\times 3})$ (the lower bound for the energy needs that $(y^h, z^h)$ has bounded energy to be rigorous, but this is not a restriction as we shall see in \cref{Prop:LowerBoundElastic}). It remains to prove the lower bound for the elastic energy. For this, we mimic the procedure of \cite{Neu12} appealing to \cref{Prop:Compactness}. See also \cite{MM03,BNS20,BGKN22-PP}, where similar ideas are used. We start by providing a characterization of the limit of the nonlinear strain, the proof of which follows immediately from \cref{Prop:Compactness} and \cite[Lem.~4.5]{Neu12}. Recall that for $R \in \SobH^1_{\SO(3)}(\omega)$ we defined $K_R := R^T \partial_1 R$.

\begin{proposition} \label{Prop:LowerBoundStrain}
Let $(y^h, z^h) \subset \mathcal{Q}^h$ with bounded energy in the sense of \eqref{Eq:BoundedEnergy}. Let $(v^h, R^h) \in \mathcal{A}_\mathrm{rod}$ and $\phi^h \in \SobW^{1,\infty}(\Omega, \R^3)$ be as in \cref{Prop:Compactness}. Consider
\begin{subequations}
\begin{alignat}{3}
	G^h &:= h^{-1}\Big((R^h)^T \grad[h]y^h &&- I\Big) &&= \compose{K_{R^h}\mathbf{\bar{x}}}{0} + (R^h)^T\grad[h]\phi^h, \\
	E^h &:= h^{-1}\Big(\sqrt{(\grad[h]y^h)^T\grad[h]y^h} &&- I\Big) &&= h^{-1} \Big(\sqrt{C_{y^h}} - I\Big). \label{Eq:NonlinearStrainLowerBound}
\end{alignat}
\end{subequations}
Then, for all $q \in [1,\infty]$,
\begin{equation}
	\limsup_{h \to 0} h^{1 - \alpha_q} \norm*{G^h}_{\Leb^q(\Omega)} + h^{1 - \alpha_q} \norm*{E^h}_{\Leb^q(\Omega)} < \infty.
\end{equation}
Moreover, if we find $z \in \Leb^2(\Omega, \R^{3\times 3})$, $(v,R) \in \mathcal{A}_\mathrm{rod}$, $\phi \in \SobH^1(\omega, \R^3)$ and $\psi \in \Leb^2(\omega, \SobH^1(S,\R^3))$ such that the convergences \eqref{Eq:ConvergencesFromCompactnessAdvanced} hold, then 
\begin{subequations}
\begin{align}
	G^h &\wto G := \compose[\big]{K_R \mathbf{\bar{x}} + R^T\partial_1\phi}{R^T\bar{\grad} \psi} && \text{in } \Leb^2(\Omega,\R^{3\times 3}), \\
	E^h &\wto E := \sym\compose[\big]{K_R \mathbf{\bar{x}} + ae_1}{\bar{\grad} \varphi} && \text{in } \Leb^2(\Omega,\R^{3\times 3}), \label{Eq:ConvergenceNonlinearStrainLowerBound}
\end{align}
\end{subequations}
where $a:= R^T\partial_1\phi \cdot e_1 \in \Leb^2(\omega)$ and $\varphi := R^T\psi + (R^T\partial_1\phi \cdot \mathbf{\bar{x}})e_1 \in \Leb^2(\omega, \SobH^1(S, \R^3))$.
\end{proposition}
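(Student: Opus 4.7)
The plan is to prove the proposition as an essentially direct consequence of \cref{Prop:Compactness}, treated separately for the uniform $\Leb^q$-bounds and for the identification of the weak limits in $\Leb^2$.

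For the $\Leb^q$-bounds on $G^h$, I would use the equivalent expression $G^h = h^{-1}(R^{h,T}\grad[h]y^h - I)$, which yields $|G^h| = h^{-1}|\grad[h]y^h - R^h|$ pointwise since $R^h \in \SO(3)$. The bound on $h^{-\alpha_{\tilde{q}}}\|\grad[h]y^h - R^h\|_{\Leb^{\tilde{q}}(\Omega)}$ from \cref{Prop:Compactness} (c) then gives $h^{1-\alpha_q}\|G^h\|_{\Leb^q(\Omega)} \leq C$, and the bound on $E^h$ follows immediately from its definition and the estimate $h^{-\alpha_q}\|\sqrt{C_{y^h}}-I\|_{\Leb^q(\Omega)} \leq C$ established in the same proposition.

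For the weak limit of $G^h$, I would rely on the decomposition $G^h = \compose{K_{R^h}\mathbf{\bar{x}}}{0} + (R^h)^T\grad[h]\phi^h$. The one-dimensional Sobolev embedding $\SobH^1(\omega)\hookrightarrow \Cont(\overline{\omega})$ upgrades $R^h\wto R$ in $\SobH^1(\omega)$ to the strong convergence $R^h\to R$ in $\Cont(\overline{\omega})$. Combining this with $\partial_1 R^h\wto\partial_1 R$ in $\Leb^2(\omega)$ and $\grad[h]\phi^h \wto \compose{\partial_1\phi}{\bar{\grad}\psi}$ in $\Leb^2(\Omega)$ (see \cref{Prop:Compactness} (d)), one obtains $K_{R^h}\mathbf{\bar{x}} \wto K_R\mathbf{\bar{x}}$ and $(R^h)^T\grad[h]\phi^h\wto R^T\compose{\partial_1\phi}{\bar{\grad}\psi}$ weakly in $\Leb^2(\Omega)$, and summing gives the stated limit $G$.

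For the weak limit of $E^h$, the crucial observation is the algebraic identity $C_{y^h} = I + h(G^h + G^{h,T}) + h^2 G^{h,T}G^h$. Since $\|\sqrt{C_{y^h}}-I\|_{\Leb^\infty(\Omega)}\leq C h^{\alpha_C}\to 0$ by \cref{Prop:Compactness} (c), we may Taylor expand the matrix square root at $I$ to obtain
\begin{equation*}
	E^h - \sym G^h = \tfrac{h}{2} G^{h,T}G^h + r^h,
\end{equation*}
with $r^h$ collecting higher-order terms. The $\Leb^4$-bound $\|G^h\|_{\Leb^4(\Omega)}\leq C h^{\alpha_4 - 1}$ with $\alpha_4 = \tfrac{1+\alpha_C}{2}$ implies $\|h\,G^{h,T}G^h\|_{\Leb^2(\Omega)} \leq C h^{\alpha_C}\to 0$, and the higher-order remainder is estimated analogously. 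Hence $E^h - \sym G^h\to 0$ strongly in $\Leb^2(\Omega)$, so $E^h \wto \sym G$. To identify $\sym G$ with the stated expression $E$, a short algebraic computation — using that $K_R$ is skew-symmetric and splitting $R^T\partial_1\phi = a e_1 + \sum_{j=2,3}(R^T\partial_1\phi \cdot e_j)e_j$ — shows that the matrix $G - \compose{K_R\mathbf{\bar{x}}+ae_1}{\bar{\grad}\varphi}$ is skew-symmetric with $a$ and $\varphi$ as in the statement, so that its symmetric part vanishes.

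The main technical obstacle is the vanishing of the quadratic Taylor remainder in $\Leb^2$: the classical $\Leb^2$-rigidity estimate alone is not sufficient, and the strengthened $\Leb^q$-bounds encoded in the exponents $\alpha_q$ of \cref{Prop:Compactness} — themselves a gain from the strain-gradient regularization — are essential. In particular the strict positivity $\alpha_C>0$ ensures the positive exponent needed in the small-$h$ estimate of the remainder.
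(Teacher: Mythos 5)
Your argument is correct and in substance is exactly the route the paper takes: the paper's proof is a one-line reduction to \cref{Prop:Compactness} together with \cite[Lem.~4.5]{Neu12}, and that cited lemma is precisely the linearization of the matrix square root that you carry out by hand — the identity $C_{y^h}=I+h\big(G^h+(G^h)^T\big)+h^2(G^h)^TG^h$, the $\Leb^4$-estimate with exponent $2\alpha_4-1=\alpha_C>0$ killing the quadratic term and the Taylor remainder, and the skew-symmetry computation identifying $\sym G$ with $E$ (which indeed only uses that the difference $\compose{R^T\partial_1\phi-ae_1}{R^T\bar{\grad}\psi-\bar{\grad}\varphi}$ is pointwise skew) all check out.

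One caveat concerns the endpoint $q=\infty$ in the bound for $G^h$: the estimate \eqref{Eq:CompactnessStrainBound} that you invoke controls $h^{-\alpha_{\tilde q}}\norm*{\grad[h]y^h-R^h}_{\Leb^{\tilde q}(\Omega)}$ only for $\tilde q\in[1,\infty)$ (geometric rigidity provides no $\Leb^\infty$ control of $\grad[h]y^h-R^h$, only of $\dist(\grad[h]y^h,\SO(3))$ and of $h\partial_1R^h$), so for $q=\infty$ your one-line reduction does not apply and that case would need a separate argument. This is a peripheral part of the statement: the $E^h$ bound at $q=\infty$ follows directly from $h^{-\alpha_C}\norm*{\sqrt{C_{y^h}}-I}_{\Leb^\infty(\Omega)}$ being bounded, and nothing in your subsequent steps (which only use finite exponents, in particular $q=4$) relies on the $\Leb^\infty$ bound for $G^h$.
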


The next essential ingredient is the following lemma resulting from a careful Taylor expansion.

\begin{lemma}[Expansion] \label{Lem:CarefulExpansion}
Let $(\Phi^h), (\Psi^h) \subset \Leb^2(\Omega, \R^{3\times 3})$ bounded and $\kappa: (0,\infty) \to (0,\infty)$ with $\lim_{h \to 0} \kappa(h) = 0$. Then, we find $O^h \subset \Omega$ with $\lim_{h \to 0} \abs*{\Omega \setminus O^h} = \lim_{h \to 0} \kappa(h)h^{-2}\abs*{\Omega \setminus O^h} = 0$, such that $\det(I + h\Psi^h(x)) > 0$ for all $x \in O^h$ and
\begin{equation}
	\lim_{h \to 0} \abs*{h^{-2} \int_{O^h} W_\el\big( (I + h\Phi^h)(I + h\Psi^h)^{-1} \big) - \int_{O^h} Q_\el \big(\Phi^h - \Psi^h\big)} = 0
\end{equation}
\end{lemma}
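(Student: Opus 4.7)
The plan is to choose a truncation level $M^h \to \infty$ growing strictly slower than $1/h$ and set
\begin{equation*}
    O^h := \set*{x \in \Omega \given \abs{\Phi^h(x)} \leq M^h,\ \abs{\Psi^h(x)} \leq M^h}.
\end{equation*}
The two size requirements on $\Omega \setminus O^h$ force a double constraint on $M^h$: by Chebyshev,
\begin{equation*}
    \abs{\Omega \setminus O^h} \leq (M^h)^{-2}\bigl(\norm{\Phi^h}_{\Leb^2}^2 + \norm{\Psi^h}_{\Leb^2}^2\bigr) \leq C (M^h)^{-2},
\end{equation*}
so $\abs{\Omega \setminus O^h} \to 0$ and $\kappa(h) h^{-2}\abs{\Omega \setminus O^h} \to 0$ both follow once $(M^h)^2 \gg \max(1, \kappa(h) h^{-2})$. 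On the other hand, for the Taylor expansion we need $h M^h \to 0$. Both can be arranged simultaneously by choosing, for instance, $M^h := \kappa(h)^{1/4}/h$ (wlog assuming $\kappa(h)\leq h^2$, otherwise we simply take the maximum with a slower divergent scale), which gives $h M^h = \kappa(h)^{1/4} \to 0$ and $\kappa(h) h^{-2}(M^h)^{-2} = \kappa(h)^{1/2} \to 0$.

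On $O^h$, for $h$ small enough, $\norm{h\Psi^h}_{\Leb^\infty(O^h)} \leq h M^h < \tfrac12$, so $I + h\Psi^h$ is invertible with $\det(I+h\Psi^h) > 0$ and the Neumann series yields $\abs{(I + h\Psi^h)^{-1} - I} \leq C h M^h$. I introduce
\begin{equation*}
    A^h := h^{-1}\bigl((I + h\Phi^h)(I + h\Psi^h)^{-1} - I\bigr) = (\Phi^h - \Psi^h)(I + h\Psi^h)^{-1},
\end{equation*}
so that on $O^h$,
\begin{equation*}
    \abs{A^h - (\Phi^h - \Psi^h)} \leq C h M^h \abs{\Phi^h - \Psi^h}, \qquad
    \abs{A^h} \leq 2\, \abs{\Phi^h - \Psi^h} \leq 4 M^h,
\end{equation*}
and in particular $h \abs{A^h} \to 0$ uniformly on $O^h$.

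Next I apply the quadratic expansion \eqref{Ass:W_Expansion} to $G = h A^h$ pointwise on $O^h$. Since $Q_\el$ is quadratic, $Q_\el(hA^h) = h^2 Q_\el(A^h)$, and therefore
\begin{equation*}
    \abs[\big]{h^{-2} W_\el(I + h A^h) - Q_\el(A^h)} \leq \abs{A^h}^2 r_\el(h\abs{A^h}) \leq \abs{A^h}^2 r_\el(4 h M^h).
\end{equation*}
Integrating and using that $\norm{A^h}_{\Leb^2(O^h)}^2 \leq 4\norm{\Phi^h - \Psi^h}_{\Leb^2}^2 \leq C$ while $r_\el(4hM^h) \to 0$, the remainder tends to zero. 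Finally, to replace $Q_\el(A^h)$ by $Q_\el(\Phi^h - \Psi^h)$ I use the polarization identity to get
\begin{equation*}
    \abs{Q_\el(A^h) - Q_\el(\Phi^h - \Psi^h)} \leq C\, \abs{A^h + (\Phi^h - \Psi^h)}\cdot\abs{A^h - (\Phi^h - \Psi^h)} \leq C' h M^h \abs{\Phi^h - \Psi^h}^2,
\end{equation*}
whose integral over $O^h$ is bounded by $C' h M^h \cdot \norm{\Phi^h - \Psi^h}_{\Leb^2}^2 \to 0$. Combining the two estimates completes the proof.

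I don't anticipate a serious obstacle beyond the bookkeeping for $M^h$: the crux is finding a single $M^h$ whose rate reconciles the $\Leb^\infty$-smallness needed for the quadratic expansion ($hM^h \to 0$) with the $\Leb^2$ Chebyshev bound needed for $\kappa(h) h^{-2}\abs{\Omega \setminus O^h}\to 0$; everything else is a routine Neumann expansion and polarization computation that only uses boundedness in $\Leb^2$ (not weak convergence nor any further structure of $\Phi^h, \Psi^h$).
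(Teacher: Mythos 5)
Your argument is correct and is essentially the paper's own proof: the truncation set defined via Chebyshev/Markov at a level calibrated against $\kappa(h)$, the Neumann-series control of $(I+h\Psi^h)^{-1}$, the pointwise quadratic expansion \eqref{Ass:W_Expansion} applied to $hA^h$, and the polarization/Cauchy--Schwarz comparison of $Q_\el(A^h)$ with $Q_\el(\Phi^h-\Psi^h)$ correspond exactly to the paper's set $O^h$, its remainder $\rho^h$, and its terms $\mathrm{(I)}^h$ and $\mathrm{(II)}^h$. The one small slip is the calibration parenthetical: $M^h=\kappa(h)^{1/4}/h$ fails to diverge (so $\abs{\Omega\setminus O^h}\to 0$ is lost) precisely when $\kappa(h)\lesssim h^4$, not when $\kappa(h)>h^2$, but this is harmless because $M^h:=\max\set{\kappa(h)^{1/4}/h,\,h^{-1/2}}$ meets all three requirements ($hM^h\to 0$, $M^h\to\infty$, $\kappa(h)h^{-2}(M^h)^{-2}\to 0$) unconditionally --- the paper handles the same issue with its case-defined $\tilde\kappa$.
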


\begin{proof}
Define $\tilde{\kappa}(h) := \kappa(h)$ if $\lim_{h \to 0} h^2\kappa(h)^{-1} = 0$ and $\tilde{\kappa}(h) := h$ else. We define the set $O^h := \set[\Big]{x \in \Omega \given \abs*{h\Phi^h(x)} + \abs*{h\Psi^h(x)} \leq \min\set*{\frac{1}{2}, \tilde{\kappa}(h)^{1/4}}}$. Then, Markov's inequality shows $\lim_{h \to 0}h^{-2}\tilde{\kappa}(h) \abs*{\Omega \setminus O^h} = 0$. Moreover, we find from the Neumann series that given $x \in O^h$,
\begin{equation*}
	\big(I + h\Psi^h(x)\big)^{-1} = I - h\Psi^h(x) + h\rho^h(x),
\end{equation*}
where $\rho^h(x) := h^{-1}\sum_{k = 2}^\infty (-h\Psi^h(x))^k$. The definition of $O^h$ shows that $(\rho^h)$ satisfies 
\begin{equation*} \tag{$*$}
	\norm*{\mathds{1}_{O^h}\rho^h}_{\Leb^2(\Omega)} \leq \tfrac{\tilde{\kappa}(h)^{1/4}}{1 - \tilde{\kappa}(h)^{1/4}} \norm*{\Psi^h}_{\Leb^2(\Omega)} \to 0, \qquad
	\norm*{\mathds{1}_{O^h}h\rho^h}_{\Leb^\infty(\Omega)} \leq \tfrac{\tilde{\kappa}(h)^{1/2}}{1 - \tilde{\kappa}(h)^{1/4}} \to 0.
\end{equation*}
Let $G^h(x) := h^{-1}\big((I + h\Phi^h(x))(I + h\Psi^h(x))^{-1} - I\big)$. Then, \eqref{Ass:W_Expansion} yields
\begin{equation*}
	\abs*{h^{-2} \int_{O^h} W_\el\big( (I + h\Phi^h)(I + h\Psi^h)^{-1} \big) - \int_{O^h} Q_\el(\Phi^h - \Psi^h)} \leq \mathrm{(I)}^h + \mathrm{(II)}^h,
\end{equation*}
where
\begin{align*}
	\mathrm{(I)}^h &:= \norm*{\mathds{1}_{O^h}G^h}_{\Leb^2(\Omega)} r_\el\big(\norm*{\mathds{1}_{O^h} hG^h}_{\Leb^\infty(\Omega)}\big), \\
	\mathrm{(II)}^h &:= \abs*{\int_{O^h} Q_\el(G^h) - Q_\el(\Phi^h - \Psi^h)}.
\end{align*}
From $(*)$ we obtain the estimates
\begin{equation*}
	\norm*{\mathds{1}_{O^h}(G^h - \Phi^h + \Psi^h)}_{\Leb^2(\Omega)} \to 0, \qquad
	\norm*{\mathds{1}_{O^h}hG^h}_{\Leb^\infty(\Omega)} \to 0.
\end{equation*}
Hence, $\mathrm{(I)}^h \to 0$ and also by the Cauchy-Schwarz inequality
\begin{equation*}
	\mathrm{(II)}^h = \abs*{\int_{O^h} (G^h - \Phi^h + \Psi^h) : \mathbb{C}_\el (G^h + \Phi^h - \Psi^h)} \to 0.
\qedhere
\end{equation*}
\end{proof}
\bigskip %% In combination with \qedhere

With these two contributions we are able to show the lower bound statement for the elastic energy and thus conclude \cref{Thm:EvolutionaryGammaConvergence} (b).

\begin{proposition} \label{Prop:LowerBoundElastic}
Let $(y^h, z^h) \subset \mathcal{Q}^h$ and $(v,R,z) \in \mathcal{Q}^0$ such that the convergences \eqref{Eq:ConvergenceOfEnergeticSolutions} hold (respectively \eqref{Eq:ConvergenceOfEnergeticSolutionsBC} provided $(y^h, z^h) \subset \mathcal{Q}^h_{(v_\mathrm{bc}, R_\mathrm{bc})}$ and $(v,R,z) \in \mathcal{Q}^0_{(v_\mathrm{bc}, R_\mathrm{bc})}$ for some $(v_\mathrm{bc}, R_\mathrm{bc}) \in \R^3 \times \SO(3)$). Then,
\begin{equation} \label{Eq:LowerBoundEnergy}
	\liminf_{h \to 0} \mathcal{E}^h_{\el+\pl}(y^h, z^h) \geq \mathcal{E}^0_{\el+\pl}(v,R,z).
\end{equation}
\end{proposition}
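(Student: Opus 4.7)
The plastic lower bound $\liminf \mathcal{E}_\pl^h(z^h) \geq \mathcal{E}_\pl^0(z)$ is established exactly as in \cite{MS13} (via Taylor expansion of $W_\pl$ combined with the weak convergence $z^h\wto z$ in $\Leb^2$ and \cref{Ass:PlasticMaterialLaw}), so the plan focuses on the elastic part. Without loss of generality we may assume $\liminf_{h\to 0}\mathcal{E}^h_{\el+\pl}(y^h,z^h)<\infty$, otherwise the inequality is trivial. After passing to a subsequence realizing the $\liminf$, the energy is bounded, and \cref{Prop:Compactness,Prop:LowerBoundStrain} apply: we obtain the Cosserat decomposition $y^h=v^h+hR^h\mathbf{\bar{x}}+h\phi^h$, the quantities $G^h,E^h$, and, using uniqueness of limits, we identify
\begin{equation*}
	G^h\wto G=\compose{K_R\mathbf{\bar{x}}+R^T\partial_1\phi}{R^T\bar{\grad}\psi},\qquad
	E^h\wto E=\sym\compose{K_R\mathbf{\bar{x}}+ae_1}{\bar{\grad}\varphi}
\end{equation*}
in $\Leb^2$, with $a,\varphi$ as in \cref{Prop:LowerBoundStrain}. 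Since the two strain gradient contributions in $\mathcal{E}_\el^h$ are non-negative, I discard them and focus on the principal term.

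Next I use frame indifference \eqref{Ass:W_FrameIndifference} to write $W_\el(\grad[h]y^h(I+hz^h)^{-1})=W_\el((I+hG^h)(I+hz^h)^{-1})$, where I exploit $(R^h)^T\grad[h]y^h=I+hG^h$. The sequences $G^h$ and $z^h$ are bounded in $\Leb^2$ (by \eqref{Eq:CompactnessStrainBound} applied with $q=2$, and \eqref{Eq:CompactnessZhBound}), so \cref{Lem:CarefulExpansion} with $\Phi^h=G^h$ and $\Psi^h=z^h$ produces a set $O^h\subset\Omega$ with $|\Omega\setminus O^h|\to 0$ on which
\begin{equation*}
	h^{-2}\int_{O^h} W_\el\big((I+hG^h)(I+hz^h)^{-1}\big) - \int_{O^h} Q_\el(G^h-z^h) \longrightarrow 0.
\end{equation*}
Combined with non-negativity of $W_\el$ this gives
\begin{equation*}
	\liminf_{h\to 0} h^{-2}\int_\Omega W_\el(\cdots) \geq \liminf_{h\to 0} \int_{O^h} Q_\el(G^h-z^h).
\end{equation*}

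Since $Q_\el$ depends only on the symmetric part, I replace $G^h$ by $\sym G^h$. Squaring the identity $(I+hG^h)^T(I+hG^h)=(I+hE^h)^2$ and rearranging yields
\begin{equation*}
	\sym G^h - E^h = \tfrac{h}{2}\big((E^h)^2-(G^h)^T G^h\big);
\end{equation*}
using the $\Leb^\infty$-estimate $\|hG^h\|_{\Leb^\infty}+\|hE^h\|_{\Leb^\infty}\lesssim h^{\alpha_C}\to 0$ from \eqref{Eq:CompactnessStrainBound} (with $q=\infty$), this difference tends to $0$ in $\Leb^2$, so $\sym G^h\wto E$. Because $\mathds{1}_{O^h}\to 1$ boundedly and $Q_\el(\placeholder)$ is a continuous, convex quadratic form, weak $\Leb^2$ lower semicontinuity gives
\begin{equation*}
	\liminf_{h\to 0}\int_{O^h} Q_\el(\sym G^h-\sym z^h) \geq \int_\Omega Q_\el(E-\sym z).
\end{equation*}

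Finally I invoke the representation developed in \cref{Sec:RepresentationFormulas}. By the definition of $E$, the pair $(a,\varphi)$ realizes a specific element of the relaxation space $\mathbb{H}_\mathrm{rel}$, hence
\begin{equation*}
	\int_\Omega Q_\el(E-\sym z) \;\geq\; \int_\omega\min_{\chi\in\mathbb{H}_\mathrm{rel}}\int_S Q_\el\big(\sym\compose{K_R\mathbf{\bar{x}}}{0}+\chi-\sym z\big) \;=\; \mathcal{E}^0_\el(v,R,z),
\end{equation*}
where the last equality is the orthogonal projection identity recalled before \cref{Def:EffectiveCoefficients}. Adding the plastic lower bound yields \eqref{Eq:LowerBoundEnergy}. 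The main technical point is the control $\sym G^h-E^h\to 0$ strongly, which relies crucially on the $\Leb^\infty$-regularity provided by the strain gradient term $H_\el^C$ through \cref{Prop:Compactness}; without it, weak convergence of $\sym G^h$ would not be guaranteed and the identification of the limit with $E$ would fail.
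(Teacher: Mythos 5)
Your proposal is correct and follows the same overall strategy as the paper: reduce to a subsequence with bounded energy, invoke \cref{Prop:Compactness} and \cref{Prop:LowerBoundStrain}, discard the nonnegative strain-gradient and ``bad set'' contributions, apply \cref{Lem:CarefulExpansion}, and conclude by weak lower semicontinuity of the quadratic form together with the characterization \eqref{Eq:CharacterizationLimitingEnergy}. The one substantive difference is the choice of rotation in the frame-indifference step: the paper uses the polar factor $R_{y^h}$, so that $\grad[h]y^h = R_{y^h}(I+hE^h)$ and the expansion is performed directly with $\Phi^h = E^h$, whose weak limit $E$ is already identified in \cref{Prop:LowerBoundStrain}; you instead use the rod rotation $R^h$, expand with $\Phi^h = G^h$, and then need the extra step $\sym G^h - E^h = \tfrac{h}{2}\big((E^h)^2 - (G^h)^TG^h\big) \to 0$ in $\Leb^2$ to transfer to $E$. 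That step is fine, but note that the required bound $\norm*{hG^h}_{\Leb^\infty(\Omega)} \lesssim h^{\alpha_C}$ is not contained in \eqref{Eq:CompactnessStrainBound} (which controls $\grad[h]y^h - R^h$ only for $\tilde q < \infty$); it is exactly the $q=\infty$ case of the bound stated in \cref{Prop:LowerBoundStrain}, which you also cite, so the argument closes --- the paper's choice of $E^h$ simply renders this comparison unnecessary.
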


\begin{proof}
Without loss, we may assume that the left-hand side of \eqref{Eq:LowerBoundEnergy} is finite and restrict to a subsequence (not relabeled) which establishes the $\liminf$ as a limit. Then, the sequence $(y^h, z^h)$ has bounded energy and by \cref{Prop:Compactness} we may restrict to a further subsequence (still not relabeled) such that the convergences \eqref{Eq:ConvergencesFromCompactnessAdvanced} hold for some $\phi \in \SobH^1(\omega, \R^3)$ and $\psi \in \Leb^2(\omega, \SobH^1(S,\R^3))$. Since then, $\liminf_{h \to 0} \mathcal{E}^h_\pl(z^h) \geq \mathcal{E}^0_\pl(z)$ can be rigorously established as discussed at the beginning of this section and trivially,
\begin{equation*}
	\liminf_{h \to 0} h^{-\alpha_C p} \int_{\Omega} H^C_\el\big(\grad \sqrt{C_{y^h}}\big) + h^{\alpha_R p} \int_{\Omega} H^R_\el\big(\grad R_{y^h}\big) \geq 0,
\end{equation*}
in view of the characterization of the limiting elastic energy \eqref{Eq:CharacterizationLimitingEnergy} it remains to show 
\begin{equation*} \tag{$*$}
	\liminf_{h \to 0} h^{-2} \int_\Omega W_\el\big(\grad[h]y^h(I + h z^h)^{-1}\big) \geq \inf_{\chi \in \Leb^2(\omega, \mathbb{H}_\mathrm{rel})}\int_\Omega Q_\el\big(\compose{K_R\mathbf{\bar{x}}}{0} + \chi - z\big),
\end{equation*}
Applying \cref{Lem:CarefulExpansion} with $\Phi^h := E^h$ (defined as in \eqref{Eq:NonlinearStrainLowerBound}) and $\Psi^h := z^h$, provides sets $O^h \subset \Omega$ with $\abs*{\Omega \setminus O^h} \to 0$, such that
\begin{align*}
	&\liminf_{h \to 0} h^{-2} \int_\Omega W_\el\big(\grad[h]y^h(I + h z^h)^{-1}\big) 
	\overset{\eqref{Ass:W_FrameIndifference}}{=} \liminf_{h \to 0} h^{-2} \int_\Omega W_\el\big((I + hE^h)(I + h z^h)^{-1}\big) \\
	&\qquad\overset{\eqref{Ass:W_NonDegeneracy}}{\geq} \liminf_{h \to 0} h^{-2} \int_{O^h} W_\el\big((I + hE^h)(I + h z^h)^{-1}\big)
	= \liminf_{h \to 0} \int_{O^h} Q_\el\big(E^h - z^h\big).
\end{align*}
Now since by \eqref{Eq:ConvergenceNonlinearStrainLowerBound}, $E^h \wto E = \sym\compose*{K_R \mathbf{\bar{x}}}{0} + \chi$ for $\chi := \sym\compose*{ae_1}{\bar{\grad}\varphi} \in \Leb^2(\omega, \mathbb{H}_\mathrm{rel})$ and $z^h \wto z$ in $\Leb^2(\Omega, \R^{3\times 3})$, we find $\mathds{1}_{O^h}(E^h - z^h) \wto E - z$ in $\Leb^2(\Omega, \R^{3\times 3})$. Hence, we conclude $(*)$ using the weak lower semi-continuity of the quadratic functional as follows,
\begin{align*}
	&\liminf_{h \to 0} \int_{O^h} Q_\el\big(E^h - z^h\big)
	= \liminf_{h \to 0} \int_{\Omega} Q_\el\big(\mathds{1}_{O^h}(E^h - z^h)\big) \\
	&\qquad\geq \int_\Omega Q_\el\big( \compose*{K_R \mathbf{\bar{x}}}{0} + \chi - z \big)
	\geq \inf_{\chi \in \Leb^2(\omega, \mathbb{H}_\mathrm{rel})}\int_\Omega Q_\el\big(\compose{K_R\mathbf{\bar{x}}}{0} + \chi - z\big).
\qedhere
\end{align*}
\end{proof}
\bigskip %% In combination with \qedhere

\subsection{Mutual recovery sequence; Proof of Theorem \ref{Thm:EvolutionaryGammaConvergence} (c)}
\label{Sec:RecoverySequence}

The main novel contribution of this paper is the construction of a recovery sequence, which allows to prove \cref{Thm:EvolutionaryGammaConvergence} (c). In this section we provide this construction. As in \cite{MS13,Dav14_02} we construct the recovery sequence for the plastic strain and the deformation independently and then show convergence of the dissipation, the hardening energy and the elastic energy individually. First, we treat the plastic terms. The following proposition is due to \cite{MS13}. We use a slight variation of the statement in order to circumvent the necessity to assume that $\hat{z} - z$ is smooth. One can easily check that all the arguments used in \cite{MS13} still hold with the adjustment. We leave the details to the reader and state the result without proof:

\begin{proposition}[cf.\ {\cite[Lem.~3.6]{MS13}}] \label{Prop:LimsupInequalityPlastic}
Let $z, \hat{z} \in \Leb^2(\Omega, \R^{3\times 3})$ with $\tilde{z} := \hat{z} - z \in \Leb^2(\Omega, \R^{3\times 3}_\mathrm{dev})$ and $(z^h) \subset \Leb^2(\Omega, \R^{3\times 3})$ such that $z^h \wto z$ in $\Leb^2(\Omega, \R^{3\times 3})$ and
\begin{equation}
	\limsup_{h \to 0} \mathcal{E}^h_\pl(z^h) < \infty.
\end{equation}
Choose a sequence $(\tilde{z}^h) \subset \Cont^\infty_\mathrm{c}(\Omega, \R^{3\times 3}_\mathrm{dev})$ with $\tilde{z}^h \to \tilde{z}$ in $\Leb^2(\Omega, \R^{3\times 3})$ and $\lim_{h \to 0} h^{1/2}\norm{\tilde{z}^h}_{\Leb^\infty(\Omega)} = 0$.
Let $U^h := \set*{x \in \Omega \given \exp(h\tilde{z}^h(x))(I + hz^h(x)) \in K_\pl}$ and define
\begin{equation} \label{Eq:RecoverySequencePlastic}
	\hat{z}^h := h^{-1}\mathds{1}_{U^h} \big(\exp(h\tilde{z}^h)(I + hz^h) - I\big) + \mathds{1}_{\Omega \setminus U^h} z^h.
\end{equation}
Then, $\hat{z}^h \wto \hat{z}$ and $\hat{z}^h - z^h \to \tilde{z}$ in $\Leb^2(\Omega, \R^{3\times 3})$ and
\begin{subequations}
\begin{alignat}{2}
	\limsup_{h \to 0}\;& \mathcal{D}^h(z^h, \hat{z}^h) &&\leq \mathcal{D}^0(z,\hat{z}), \\
	\limsup_{h \to 0}\;& \Big(\mathcal{E}^h_\pl(\hat{z}^h) - \mathcal{E}^h_\pl(z^h)\Big) &&\leq \mathcal{E}^0_\pl(\hat{z}) - \mathcal{E}^0_\pl(z).
\end{alignat}
\end{subequations}
\end{proposition}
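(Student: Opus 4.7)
The plan is to carry out the construction from \cite[Lem.~3.6]{MS13}, emphasizing the places where the relaxed assumption on $\tilde{z}$ matters. I would proceed in four steps.

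First, I would show $|\Omega \setminus U^h| \to 0$, in fact $|\Omega \setminus U^h| = O(h^2)$. The a priori estimate \eqref{Eq:AprioriEstimateEnergy} combined with $\limsup_h \mathcal{E}^h_\pl(z^h) < \infty$ yields $\|z^h\|_{\Leb^2} + h\|z^h\|_{\Leb^\infty} \leq C$ and $I + hz^h \in K_\pl$ almost everywhere. Since $\tilde{z}^h \in \R^{3\times 3}_\mathrm{dev}$ we have $\det\exp(h\tilde{z}^h) = 1$, so $\exp(h\tilde{z}^h)(I+hz^h) \in \SL(3)$. Because $I$ is a relatively interior point of $K_\pl$, there is a $\delta > 0$ with $B_\delta(I)\cap \SL(3) \subset K_\pl$. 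The hypothesis $h\|\tilde{z}^h\|_{\Leb^\infty} \to 0$ makes $\exp(h\tilde{z}^h)$ close to $I$ uniformly, so for $h$ small $\Omega \setminus U^h \subset \{|hz^h| > \delta/(2C)\}$, and Chebyshev applied to the $\Leb^2$-bound on $z^h$ gives $|\Omega \setminus U^h| = O(h^2)$.

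Second, I would Taylor-expand the defining identity $I + h\hat{z}^h = \exp(h\tilde{z}^h)(I+hz^h)$ on $U^h$, obtaining
\begin{equation*}
	\hat{z}^h - z^h = \tilde{z}^h + h\tilde{z}^h z^h + \tfrac{h}{2}(\tilde{z}^h)^2 + r^h,
\end{equation*}
with higher-order remainder $r^h$. The hypothesis $h^{1/2}\|\tilde{z}^h\|_{\Leb^\infty} \to 0$ together with $\tilde{z}^h \to \tilde{z}$ and $\|z^h\|_{\Leb^2} \leq C$ forces each correction term to vanish in $\Leb^2$. On $\Omega \setminus U^h$ one has $\hat{z}^h = z^h$, so $\hat{z}^h - z^h - \tilde{z} = -\tilde{z}$ there; since $|\Omega \setminus U^h| \to 0$ and $\tilde{z} \in \Leb^2$, dominated convergence gives $\mathds{1}_{\Omega\setminus U^h}\tilde{z} \to 0$ in $\Leb^2$. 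Hence $\hat{z}^h - z^h \to \tilde{z}$ in $\Leb^2$, and combined with $z^h \wto z$ this yields $\hat{z}^h \wto \hat{z}$.

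For the dissipation, the crucial algebraic identity $(I + h\hat{z}^h)(I + hz^h)^{-1} = \exp(h\tilde{z}^h)$ holds on $U^h$, while the integrand vanishes on $\Omega \setminus U^h$. Inserting the path $P(t) = \exp(t\tilde{z}^h)$, $t \in [0,h]$, in \eqref{Eq:DefinitionDissipationDistance} yields $\mathscr{D}_\pl(I,\exp(h\tilde{z}^h)) \leq h\mathscr{R}_\pl(\tilde{z}^h)$ by the $1$-homogeneity of $\mathscr{R}_\pl$; hence $\mathcal{D}^h(z^h,\hat{z}^h) \leq \int_{U^h}\mathscr{R}_\pl(\tilde{z}^h)$. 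The Lipschitz continuity of $\mathscr{R}_\pl$ (from convexity, $1$-homogeneity and linear growth) together with $\tilde{z}^h \to \tilde{z}$ in $\Leb^2$ and $|\Omega \setminus U^h| \to 0$ forces $\int_{U^h}\mathscr{R}_\pl(\tilde{z}^h) \to \int_\Omega\mathscr{R}_\pl(\tilde{z}) = \mathcal{D}^0(z,\hat{z})$.

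The hardening inequality is the most delicate step. I would split $\Omega$ into $A_\eta^h := U^h \cap \{|hz^h| + |h\hat{z}^h| \leq \eta\}$ and its complement. On $A_\eta^h$, \eqref{Ass:WP_Expansion} yields
\begin{equation*}
	h^{-2}\bigl(W_\pl(I+h\hat{z}^h) - W_\pl(I+hz^h)\bigr) = Q_\pl(\hat{z}^h) - Q_\pl(z^h) + \rho^h,
\end{equation*}
with $\int|\rho^h| \leq r_\pl(\eta)\bigl(\|z^h\|_{\Leb^2}^2 + \|\hat{z}^h\|_{\Leb^2}^2\bigr) \leq Cr_\pl(\eta)$. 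Polarization gives $Q_\pl(\hat{z}^h) - Q_\pl(z^h) = (\hat{z}^h - z^h):\mathbb{C}_\pl(\hat{z}^h + z^h)$; since $\hat{z}^h - z^h \to \tilde{z}$ strongly and $\hat{z}^h + z^h \wto \hat{z}+z$ weakly in $\Leb^2$, this passes to $\int_\Omega[Q_\pl(\hat{z}) - Q_\pl(z)]$, and then $\eta \to 0$ eliminates the remainder. The complement $\Omega \setminus A_\eta^h$ has measure $O(h^2)$ by Chebyshev applied to $z^h$ and $\hat{z}^h$; using $\hat{z}^h = z^h$ on $\Omega\setminus U^h$ and $|\hat{z}^h - z^h| \leq C|\tilde{z}^h|$ on $U^h$, the Lipschitz bound \eqref{Ass:WP_Continuity} gives
\begin{equation*}
	h^{-2}\int_{\Omega\setminus A_\eta^h}\!\bigl|W_\pl(I+h\hat{z}^h) - W_\pl(I+hz^h)\bigr| \leq h^{-1}L\,C\|\tilde{z}^h\|_{\Leb^\infty}\,|U^h \setminus A_\eta^h|,
\end{equation*}
which is $O(h\|\tilde{z}^h\|_{\Leb^\infty}) = o(h^{1/2}) \to 0$ thanks to $h^{1/2}\|\tilde{z}^h\|_{\Leb^\infty} \to 0$. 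The principal obstacle throughout is precisely this tension between the uniform bound $\|hz^h\|_{\Leb^\infty} \leq C$ (not small) and the need for strong $\Leb^2$ convergences; the assumption $h^{1/2}\|\tilde{z}^h\|_{\Leb^\infty} \to 0$ is exactly what closes this gap and replaces the smoothness hypothesis of \cite{MS13}.
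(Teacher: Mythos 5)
Your proof is correct and follows essentially the same route as the argument of \cite[Lem.~3.6]{MS13} that the paper itself invokes (the paper states this proposition without proof, leaving exactly this verification to the reader): smallness of $\Omega\setminus U^h$ via Chebyshev and the relative interior of $K_\pl$ in $\SL(3)$, the multiplicative identity $(I+h\hat z^h)(I+hz^h)^{-1}=\exp(h\tilde z^h)$ with the exponential path for the dissipation, and the quadratic expansion of $W_\pl$ on the good set combined with the strong--weak pairing from polarization. You also correctly locate where the relaxed hypothesis on $\tilde z$ enters; in fact your estimates only use $h\norm{\tilde z^h}_{\Leb^\infty(\Omega)}\to 0$, so the assumed decay $h^{1/2}\norm{\tilde z^h}_{\Leb^\infty(\Omega)}\to 0$ is more than sufficient for every step.
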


We turn to the construction of the recovery sequence for the deformation, that is, the construction of a sequence $(\hat{y}^h)$, such that
\begin{equation} \label{Eq:LimsupInequalityElastic}
	\limsup_{h \to 0} \left( \mathcal{E}^h_\el(\hat{y}^h,\hat{z}^h) -  \mathcal{E}^h_\el(y^h,z^h)\right) \leq \mathcal{E}^0_\el(\hat{v},\hat{R},\hat{z}) - \mathcal{E}^0_\el(v,R,z),
\end{equation}
for $(y^h, z^h)$, $(v,R,z)$ and $(\hat{v},\hat{R},\hat{z})$ as in \cref{Thm:EvolutionaryGammaConvergence} (c) and $(\hat{z}^h)$ as constructed above. We establish \eqref{Eq:LimsupInequalityElastic} by showing separately,
\begin{subequations} \label{Eq:LimsupInequalityElasticSeparated}
\begin{alignat}{2}
	&\limsup_{h \to 0} h^{-2}\int_\Omega W_\el \big( \grad[h]\hat{y}^h(I + h\hat{z}^h)^{-1}\big) - W_\el \big( \grad[h]y^h(I + hz^h)^{-1}\big) \leq \mathcal{E}^0_\el(\hat{v},\hat{R},\hat{z}) - \mathcal{E}^0_\el(v,R,z), \label{Eq:LimsupInequalityElasticBending}\\
	&\limsup_{h \to 0} h^{-p \alpha_C} \int_\Omega H^C_\el\big(\grad\sqrt{C_{\hat{y}^h}}\big) - H^C_\el\big(\grad\sqrt{C_{y^h}}\big) \leq 0, \label{Eq:LimsupInequalityElasticStrainGradientC}\\
	&\limsup_{h \to 0} h^{p \alpha_R} \int_\Omega H^R_\el\big(\grad R_{\hat{y}^h}\big) - H^R_\el\big(\grad R_{y^h}\big) \leq 0. \label{Eq:LimsupInequalityElasticStrainGradientR}
\end{alignat}
\end{subequations}
The following lemma is used to construct the rotational part of the deformation.

\begin{lemma} \label{Lem:MultiplicativeSmootheningOfRotations}
Let $\kappa: (0,\infty) \to [0,\infty)$ with $\lim_{h \to 0} \kappa(h) = 0$, $R \in \SobH^1_{\SO(3)}(\omega, \R^{3\times 3})$ and $(R^h) \subset \SobH^1_{\SO(3)}(\omega, \R^{3\times 3})$ with $R^h \wto R$ in $\SobH^1(\omega,\R^{3\times 3})$. Then, given another $\hat{R} \in \SobH^1_{\SO(3)}(\omega, \R^{3\times 3})$, we find a sequence $\hat{R}^h \subset \SobH^1_{\SO(3)}(\omega, \R^{3\times 3})$, such that
\begin{subequations}
\begin{align}
	\hat{R}^h &\wto \hat{R} && \text{in } \SobH^1(\omega, \R^{3\times 3}), \\
	K_{\hat{R}^h} - K_{R^h} &\to K_{\hat{R}} - K_R && \text{in } \Leb^2(\omega, \R^{3\times 3}),
\end{align}
\begin{equation} \label{Eq:ScalingRotation}
	\lim_{h \to 0} \kappa(h) \left(\norm*{K_{\hat{R}^h} - K_{R^h}}_{\Leb^\infty(\omega)} + \norm*{K_{\tilde{R}^h}}_{\Leb^\infty(\omega)} + \norm*{\partial_1 K_{\tilde{R}^h}}_{\Leb^\infty(\omega)}\right) = 0,
\end{equation}
\end{subequations}
where $\tilde{R}^h := \hat{R}^h (R^h)^T$. Moreover, if $R^h(0)=R(0)=\hat{R}(0) = R_\mathrm{bc} \in \SO(3)$, then also $\hat{R}^h(0) = R_\mathrm{bc}$.
\end{lemma}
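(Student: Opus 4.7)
The plan is to use a multiplicative ansatz $\hat R^h := \tilde R^h R^h$, where $\tilde R^h \in \Cont^\infty(\bar\omega, \SO(3))$ is a carefully chosen smooth approximation of the defect $\tilde R := \hat R R^T \in \SobH^1_{\SO(3)}(\omega, \R^{3\times 3})$. A direct computation from $\hat R^h = \tilde R^h R^h$ gives the key identity
\begin{equation*}
    K_{\hat R^h} - K_{R^h} = (R^h)^T K_{\tilde R^h} R^h,
\end{equation*}
so that, since $R^h \in \SO(3)$ pointwise, all three quantities controlled by $\kappa(h)$ in \eqref{Eq:ScalingRotation} are bounded by $\|K_{\tilde R^h}\|_{\Leb^\infty(\omega)} + \|\partial_1 K_{\tilde R^h}\|_{\Leb^\infty(\omega)}$; the desired $\Leb^2$-convergence $K_{\hat R^h} - K_{R^h} \to K_{\hat R} - K_R$ reduces analogously to $K_{\tilde R^h} \to K_{\tilde R}$ in $\Leb^2$ combined with uniform convergence of $R^h$. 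The boundary condition $\hat R^h(0) = R_\mathrm{bc}$ reduces to $\tilde R^h(0) = I$, which is enforceable because in the boundary case $\tilde R(0) = R_\mathrm{bc} R_\mathrm{bc}^T = I$.

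\textbf{Construction.} Let $A := K_{\tilde R} \in \Leb^2(\omega, \R^{3\times 3}_{\skewMat})$. I extend $A$ to $\R$ by reflection and convolve with a standard mollifier $\eta_{\delta(h)}$ to obtain $A^{\delta(h)} \in \Cont^\infty(\bar\omega, \R^{3\times 3}_{\skewMat})$. I then define $\tilde R^h$ as the unique smooth solution of the linear matrix ODE
\begin{equation*}
    \partial_1 \tilde R^h = \tilde R^h A^{\delta(h)}, \qquad \tilde R^h(0) = \tilde R(0).
\end{equation*}
Skew-symmetry of $A^{\delta(h)}$ ensures $\tilde R^h$ takes values in $\SO(3)$, and by construction $K_{\tilde R^h} = A^{\delta(h)}$. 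Young's inequality in one dimension yields the standard bounds
\begin{equation*}
    \|K_{\tilde R^h}\|_{\Leb^\infty(\omega)} \leq c\, \delta(h)^{-1/2}\|A\|_{\Leb^2(\omega)}, \qquad
    \|\partial_1 K_{\tilde R^h}\|_{\Leb^\infty(\omega)} \leq c\, \delta(h)^{-3/2}\|A\|_{\Leb^2(\omega)}.
\end{equation*}
I choose $\delta(h) \to 0$ slowly enough that $\kappa(h)\delta(h)^{-3/2} \to 0$ (for example $\delta(h) := \max(\kappa(h)^{1/2}, h)$), which together with the key identity secures \eqref{Eq:ScalingRotation}.

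\textbf{Verification of the convergences.} Standard mollification gives $A^{\delta(h)} \to A$ in $\Leb^2$, and a Grönwall estimate applied to the difference of the two ODEs $\partial_1 \tilde R = \tilde R A$ and $\partial_1 \tilde R^h = \tilde R^h A^{\delta(h)}$ with coinciding initial data yields $\tilde R^h \to \tilde R$ uniformly; subsequently $\partial_1 \tilde R^h = \tilde R^h A^{\delta(h)} \to \tilde R A = \partial_1 \tilde R$ in $\Leb^2$, hence $\tilde R^h \to \tilde R$ strongly in $\SobH^1 \cap \Leb^\infty$. The compact 1D embedding $\SobH^1(\omega) \hookrightarrow \Cont^0(\bar\omega)$ gives $R^h \to R$ uniformly. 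Applying a strong-times-weak argument to $\partial_1 \hat R^h = (\partial_1 \tilde R^h) R^h + \tilde R^h \partial_1 R^h$ (the first summand converges strongly in $\Leb^2$, the second weakly in $\Leb^2$), I obtain $\hat R^h \wto \hat R$ in $\SobH^1$. The key identity together with $K_{\tilde R^h} \to K_{\tilde R}$ in $\Leb^2$ and $R^h \to R$ in $\Leb^\infty$ finally delivers $K_{\hat R^h} - K_{R^h} \to K_{\hat R} - K_R$ in $\Leb^2$.

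\textbf{Main obstacle.} The delicate point is the simultaneous balancing of the mollification parameter: $\delta(h)$ must be small enough for $L^2$-strong convergence of $K_{\tilde R^h}$ (required for the weak $\SobH^1$ convergence of $\hat R^h$ with the correct limit) yet large enough that $\kappa(h) \|\partial_1 K_{\tilde R^h}\|_{\Leb^\infty} \sim \kappa(h)\delta(h)^{-3/2} \to 0$. The range $\kappa(h)^{2/3} \ll \delta(h) \to 0$ is nonempty precisely because $\kappa(h) \to 0$, which is what makes the construction work. The multiplicative ansatz is what makes the proof clean: an additive smoothing of $\hat R$ would neither land in $\SO(3)$ nor give the explicit $(R^h)^T K_{\tilde R^h} R^h$ identity that transforms all $L^\infty$ and $L^2$ control on $K_{\tilde R^h}$ into the corresponding control on the relative quantity $K_{\hat R^h} - K_{R^h}$.
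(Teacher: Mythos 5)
Your proposal is correct and follows essentially the same route as the paper: the multiplicative ansatz $\hat{R}^h=\tilde{R}^hR^h$ with $\tilde{R}=\hat{R}R^T$, a smooth approximation of $K_{\tilde{R}}$ turned into a rotation field by solving the linear ODE, and the identity $K_{\hat{R}^h}-K_{R^h}=(R^h)^TK_{\tilde{R}^h}R^h$ to transfer all bounds. The only difference is that you make the paper's ``approximation argument'' explicit via mollification at scale $\delta(h)$ with $\kappa(h)\delta(h)^{-3/2}\to 0$, which in particular spells out the control of $\partial_1K_{\tilde{R}^h}$ that the paper leaves implicit.
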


\begin{proof}
Let $\tilde{R} := \hat{R}R^T$. By an approximation argument, we may choose $(\tilde{K}^h) \subset \Cont^\infty(\overline{\omega}, \R^{3\times 3}_{\skewMat})$ satisfying
\begin{equation*}
	\tilde{K}^h \to K_{\tilde{R}} \text{ in } \Leb^2(\Omega, \R^{3\times 3}), \qquad
	\lim_{h \to 0}\kappa(h)\norm*{\tilde{K}^h}_{\Leb^\infty(\omega)} = 0.
\end{equation*}
Then, by solving a linear ODE, we find $\tilde{R}^h \in \Cont^\infty(\overline{\omega}, \R^{3\times 3}) \cap \Cont(\overline{\omega},\SO(3))$, such that $\tilde{K}^h = K_{\tilde{R}^h}$ and $\tilde{R}^h(0) = \tilde{R}(0)$, see \cite[Lem.~2.3]{Neu12} for details. Moreover, since $\tilde{R}$ is given uniquely by $K_{\tilde{R}}$ and $\tilde{R}(0)$, we get $\tilde{R}^h \to \tilde{R}$ in $\SobH^1(\omega, \R^{3\times 3})$. Set $\hat{R}^h := \tilde{R}^h R^h$. Then, one easily computes the identities
\begin{equation*}
	K_{\hat{R}^h} - K_{R^h} = (R^h)^T K_{\tilde{R}^h} R^h, \qquad
	K_{\hat{R}} - K_R = R^T K_{\tilde{R}} R.
\end{equation*}
From these, the claim is easily checked.
\end{proof}

The following proposition establishes a general construction of a deformation via a corrected standard Cosserat ansatz. The recovery sequence shall be of this form.

\begin{proposition} \label{Prop:ConstructionRecoverySequenceDeformation}
Consider $(y^h, z^h) \subset \mathcal{Q}^h$ with bounded energy in the sense of \eqref{Eq:BoundedEnergy} and let $(v,R,z) \in \mathcal{Q}^0$. Let $(v^h, R^h) \in \mathcal{A}_\mathrm{rod}$ and $\phi^h \in \SobW^{1,\infty}(\Omega, \R^{3\times 3})$ be as in \cref{Prop:Compactness} and assume that $(v^h, R^h, \phi^h)$ converges to $(v,R,\phi,\psi)$ in the sense of \eqref{Eq:ConvergencesFromCompactnessAdvanced}. Let $(\hat{v},\hat{R}) \in \mathcal{A}_\mathrm{bc}$, $\hat{a} \in \Leb^2(\omega)$ and $\hat{\varphi} \in \Leb^2(\omega, \SobH^1(S, \R^3))$. Then, there exist sequences as follows:
\begin{enumerate}[(a)]
	\item $(\hat{a}^h) \subset \Cont^\infty_\mathrm{c}(\omega)$, such that 
	\begin{equation*}
		\hat{a}^h \to \hat{a} \text{ in } \Leb^2(\omega) \quad\text{and}\quad
		\lim_{h \to 0} \left(h^{1 - \alpha_C - \alpha_R} + h^{2\frac{1 - \alpha_C}{p}}\right) \norm{\hat{a}^h}_{\SobW^{1,\infty}(\omega)} = 0.
	\end{equation*}
	
	\item $(\hat{\phi}^h) \subset \Cont^\infty_\mathrm{c}(\Omega, \R^3)$ with $\hat{\phi}^h(0,\bar{x}) = 0$ for a.e.\ $\bar{x} \in S$, such that
	\begin{align*}
		&\hat{\phi}^h \to 0 \text{ in } \SobH^1(\Omega, \R^3), \quad 
		\grad[h]\hat{\phi}^h \to \compose{0}{\hat{R}\bar{\grad}\hat{\varphi} - \hat{R}R^T\bar{\grad}\psi} \text{ in } \Leb^2(\Omega, \R^{3\times 3}) \quad\text{and} \\
		& \lim_{h \to 0} h^{1 - \alpha_C - \alpha_R}\left(\norm{\grad[h]\hat{\phi}^h}_{\Leb^\infty(\Omega)} + \norm{\grad\grad[h]\hat{\phi}^h}_{\Leb^\infty(\Omega)}\right) = 0
	\end{align*}
	
	\item $(\phi^h_*) \subset \Cont^\infty(\overline{\omega}, \R^3)$ with $\phi^h_*(0) = 0$, such that
	\begin{equation*}
		\phi^h_* \to \phi \text{ in } \SobH^1(\omega, \R^3) \quad\text{and}\quad
		\lim_{h \to 0} h^{1 - \alpha_C - \alpha_R}\norm{\phi^h_*}_{\SobW^{2,\infty}(\omega)} = 0,
	\end{equation*}
	
	\item $(\hat{R}^h) \subset \SobH^1_{\SO(3)}(\omega, \R^{3\times 3})$ as in \cref{Lem:MultiplicativeSmootheningOfRotations} with 
	\begin{equation*}
		\kappa(h) := \norm{\phi^h - \phi^h_*}_{\Leb^2(\Omega)} + h^{1 - \alpha_C - \alpha_R}\left(1 + \norm*{\phi^h_*}_{\Leb^\infty(\omega)} + \norm*{\phi^h}_{\Leb^\infty(\omega)}\right) + h^{2\frac{1 - \alpha_C}{p}} + h^{\alpha_R}.
	\end{equation*}
\end{enumerate}
Moreover, we introduce $\hat{v}^h(x_1) := \hat{v}(0) + \int_0^{x_1} \hat{R}^h(s)e_1 \,\dd s$, $x_1 \in \omega$, $\tilde{R}^h := \hat{R}^h (R^h)^T$ and $\tilde{R} := \hat{R}R^T$. Define the deformation,
\begin{multline}
	\hat{y}^h(x) := \hat{v}^h(x_1) + h\hat{R}^h(x_1)\mathbf{\bar{x}}(x) \\
	+ h \left(\tilde{R}^h(x_1)(\phi^h(x) - \phi^h_*(x_1)) + \hat{\phi}^h(x) + \int_0^{x_1} \hat{a}^h(s)\hat{R}^h(s)e_1 \,\dd s \right).
\end{multline}
Then, $(\hat{y}^h) \subset \SobW^{2,p}(\Omega, \R^3)$ satisfies $\hat{y}^h \to \hat{v}$ in $\SobH^1(\Omega, \R^3)$ and $\grad[h]\hat{y}^h \to \hat{R}$ in $\Leb^2(\Omega, \R^{3\times 3})$. Recall $G^h, G \in \Leb^2(\Omega, \R^{3\times 3})$ from \cref{Prop:LowerBoundStrain} and consider $\hat{G}^h := h^{-1}((\hat{R}^h)^T\grad[h]\hat{y}^h - I)$. Then,
\begin{subequations}
\begin{align}
	\hat{G}^h &\wto \hat{G} := \compose[\big]{K_{\hat{R}}\mathbf{\bar{x}} + \hat{a}e_1}{\bar{\grad}\hat{\varphi}} && \text{in } \Leb^2(\Omega,\R^{3\times 3}), \\
	\hat{G}^h - G^h &\to \hat{G} - G && \text{in } \Leb^2(\Omega,\R^{3\times 3}),
\end{align}
\end{subequations}
Furthermore, $\Delta^h$ and $\hat{\Delta}^h$ defined by
\begin{equation}
	\grad[h]\hat{y}^h = \tilde{R}^h(\grad[h]y^h + h\Delta^h), \qquad
	\grad[h]\hat{y}^h = \tilde{R}^h(I + h\hat{\Delta}^h)\grad[h]y^h,
\end{equation}
satisfy $\Delta^h \to R(\hat{G} - G)$ and $\hat{\Delta}^h \to R(\hat{G} - G)R^T$ in $\Leb^2(\Omega, \R^{3\times 3})$, and
\begin{subequations} \label{Eq:EstimatesDeltah}
\begin{alignat}{2}
	&\lim_{h \to 0} h^{1 - \alpha_C - \alpha_R} \norm*{\Delta^h}_{\Leb^\infty(\Omega)} + h^{1 - \alpha_C} \norm*{\grad \Delta^h}_{\Leb^p(\Omega)} &&= 0, \\
	&\lim_{h \to 0} h^{1 - \alpha_C - \alpha_R} \norm*{\hat{\Delta}^h}_{\Leb^\infty(\Omega)} &&= 0.
\end{alignat}
\end{subequations}
\end{proposition}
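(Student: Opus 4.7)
\emph{Strategy.} The plan is to construct the four auxiliary sequences (a)--(d) via density and diagonal arguments, then verify the convergences by a direct computation of $\nabla_h \hat y^h$. Sequences (a) and (c) arise from standard mollification of $\hat a \in L^2(\omega)$ and $\phi \in H^1(\omega)$, with the mollification parameter chosen diagonally so that higher Sobolev norms blow up slower than the prescribed negative powers of $h$. For (b) we use a Cosserat-type ansatz: approximate $\hat\varphi$ and $\psi$ by smooth compactly supported functions, set $\hat\phi^h(x) := h\eta^h(x_1)\hat R(x_1)[\hat\varphi^\#(x) - R(x_1)^T\psi^\#(x)]$ with a cut-off $\eta^h$ near $\partial\omega$ to guarantee the trace, and diagonalize. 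For (d) we invoke \cref{Lem:MultiplicativeSmootheningOfRotations} with the prescribed $\kappa$; the hypothesis $\kappa(h)\to 0$ holds because $\phi^h \to \phi$ in $L^2(\Omega)$ by Rellich (combined with \cref{Prop:Compactness}(d)) and $\phi^h_*\to \phi$ in $H^1(\omega)\hookrightarrow L^2(\Omega)$ by (c), while $h^{1-\alpha_C-\alpha_R}\|\phi^h\|_{L^\infty(\Omega)}\to 0$ by \cref{Rem:alpha_q}(ii) (using $\alpha_R<\tfrac{2}{3}(1-\alpha_C)$).

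\emph{Key identity.} Since $\partial_1 \hat v^h = \hat R^h e_1$ and a direct differentiation of $h\hat R^h \mathbf{\bar{x}}$ yields scaled normal gradient $\hat R^h [0\,|\,e_2\,|\,e_3]$, one obtains
\begin{equation*}
  \nabla_h \hat y^h = \hat R^h + h\bigl[\partial_1\hat R^h \mathbf{\bar{x}} \,\big|\, 0\bigr] + h \nabla_h T^h,
\end{equation*}
where $T^h := \tilde R^h(\phi^h - \phi^h_*) + \hat\phi^h + \int_0^{x_1} \hat a^h \hat R^h e_1\,\dd s$. Exploiting $(\hat R^h)^T \tilde R^h = (R^h)^T$ and the formula $G^h = [K_{R^h}\mathbf{\bar{x}}\,|\,0] + (R^h)^T \nabla_h \phi^h$ from \cref{Prop:LowerBoundStrain}, the cancellations leave
\begin{equation*}
  \hat G^h - G^h = \bigl[(K_{\hat R^h}-K_{R^h})\mathbf{\bar{x}} + \hat a^h e_1 \,\big|\, 0\bigr] + (\hat R^h)^T \nabla_h \hat\phi^h + (\hat R^h)^T\bigl[\partial_1\tilde R^h(\phi^h-\phi^h_*) - \tilde R^h \partial_1\phi^h_* \,\big|\, 0\bigr].
\end{equation*}
Combining $(\tilde R^h)^T \nabla_h \hat y^h = R^h(I+h\hat G^h)$ with $\nabla_h y^h = R^h(I+hG^h)$ gives directly $\Delta^h = R^h(\hat G^h - G^h)$ and $I + h\hat\Delta^h = R^h(I+h\hat G^h)(I+hG^h)^{-1}(R^h)^T$.

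\emph{Convergences.} The five summands in the identity for $\hat G^h - G^h$ pass to the limit in $L^2(\Omega)$ term by term: $(K_{\hat R^h}-K_{R^h})\mathbf{\bar{x}} \to (K_{\hat R}-K_R)\mathbf{\bar{x}}$ by \cref{Lem:MultiplicativeSmootheningOfRotations}; $\hat a^h \to \hat a$ by (a); $(\hat R^h)^T\nabla_h \hat\phi^h$ converges because $\hat R^h \to \hat R$ uniformly via $H^1(\omega)\hookrightarrow C(\bar\omega)$; $\tilde R^h \partial_1\phi^h_* \to \tilde R \partial_1\phi$ by (c) and uniform convergence; and the cross term is dominated in $L^2$ by $\|K_{\tilde R^h}\|_{L^\infty}\|\phi^h-\phi^h_*\|_{L^2} \le \|K_{\tilde R^h}\|_{L^\infty}\,\kappa(h) \to 0$ by (d). A short algebraic check using $\hat R^T \tilde R = R^T$ shows that the $L^2$ limit equals $\hat G - G$, so that $\hat G^h - G^h \to \hat G - G$ strongly, and consequently $\hat G^h \wto \hat G$ since $G^h \wto G$. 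The convergence of $\Delta^h$ is immediate, and $\hat\Delta^h \to R(\hat G - G) R^T$ follows from the Neumann expansion $(I+hG^h)^{-1} = I - hG^h + O(h\|G^h\|_{L^\infty})$, valid since $h\|G^h\|_{L^\infty} \lesssim h^{\alpha_C}\to 0$ by \cref{Prop:Compactness}(c). The $H^1(\Omega)$--$L^2(\Omega)$ convergences for $\hat y^h$ and $\nabla_h \hat y^h$ are then routine consequences of these.

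\emph{Main obstacle.} The delicate part is \eqref{Eq:EstimatesDeltah}: the choice of $\kappa$ in (d) is engineered precisely to make all $L^\infty$ and $W^{1,p}$ contributions to $\Delta^h$ compatible. The $L^\infty$ estimate $h^{1-\alpha_C-\alpha_R}\|\Delta^h\|_{L^\infty}\to 0$ decomposes along the five summands above; the critical term is $\|\partial_1\tilde R^h\|_{L^\infty}\|\phi^h-\phi^h_*\|_{L^\infty}\le \|K_{\tilde R^h}\|_{L^\infty}(\|\phi^h\|_{L^\infty}+\|\phi^h_*\|_{L^\infty})$, which is absorbed precisely because the summand $h^{1-\alpha_C-\alpha_R}(1+\|\phi^h_*\|_{L^\infty}+\|\phi^h\|_{L^\infty})$ in $\kappa$ forces $\|K_{\tilde R^h}\|_{L^\infty}$ to be correspondingly small; this step crucially uses the $L^\infty$--bound $h^{1-\alpha_C-\alpha_R}\|\phi^h\|_{L^\infty}\to 0$ from \cref{Rem:alpha_q}(ii), which in turn uses $\alpha_R<\tfrac{2}{3}(1-\alpha_C)$. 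The $W^{1,p}$ estimate $h^{1-\alpha_C}\|\nabla\Delta^h\|_{L^p}\to 0$ is handled by differentiating the identity $\Delta^h = R^h(\hat G^h - G^h)$ and invoking the strain-gradient bound \eqref{Eq:CompactnessStrainGradientsBound} together with the additional summands $h^{2(1-\alpha_C)/p}$ and $h^{\alpha_R}$ built into $\kappa$. This simultaneous juggling of scalings is the technical heart of the recovery-sequence construction.
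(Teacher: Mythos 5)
Your proposal is correct and follows essentially the same route as the paper: the same multiplicative ansatz with the identity $(\hat R^h)^T\tilde R^h=(R^h)^T$ leading to the five-term representation of $\hat G^h-G^h$, the identification $\Delta^h=R^h(\hat G^h-G^h)$, the cross term absorbed via $\kappa(h)\norm{K_{\tilde R^h}}_{\Leb^\infty(\omega)}\to 0$ from \cref{Lem:MultiplicativeSmootheningOfRotations}, and the same bookkeeping of the summands of $\kappa$ for \eqref{Eq:EstimatesDeltah}. The only (inessential) deviation is that you obtain $\hat\Delta^h\to R(\hat G-G)R^T$ via the factorization $(I+h\hat G^h)(I+hG^h)^{-1}$ and a Neumann-type expansion, whereas the paper uses $\hat\Delta^h=\Delta^h(\grad[h]y^h)^{-1}$ together with the $\Leb^\infty$-bound on $(\grad[h]y^h)^{-1}$ — the two arguments are equivalent.
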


\begin{proof}
Recall $\alpha_q$, $q \in [1,\infty]$ from \cref{Prop:Compactness} and \cref{Rem:alpha_q}. Then, the proposed sequences exist by standard approximation arguments and the estimates obtained in \cref{Prop:Compactness}. Note that if $\hat{R}(\hat\varphi - R^T\psi)$ is smooth, we can choose
$\hat{\varphi}^h := h\hat{R}(\hat\varphi - R^T\psi) - \fint_S h\hat{R}(\hat\varphi - R^T\psi)$. An elementary calculation using $\hat{R}^h = \tilde{R}^hR^h$ shows
\begin{equation*}
	\hat{G}^h = \compose*{K_{\hat{R}^h}\mathbf{\bar{x}} - (R^h)^T\partial_1\phi^h_* + (R^h)^TK_{\tilde{R}^h}(\phi^h - \phi^h_*) + \hat{a}^h e_1}{0} + (\hat{R}^h)^T \grad[h]\hat{\phi}^h + (R^h)^T\grad[h]\phi^h.
\end{equation*}
Note that by construction $\norm{K_{\tilde{R}^h}(\phi^h - \phi^h_*)}_{\Leb^2(\Omega)} \leq \kappa(h)\norm{K_{\tilde{R}^h}}_{\Leb^\infty(\omega)} \to 0$. Thus, we obtain from \cref{Lem:MultiplicativeSmootheningOfRotations},
\begin{align*}
	\hat{G}^h - G^h &= \compose*{(K_{\hat{R}^h} - K_{R^h})\mathbf{\bar{x}} - (R^h)^T\partial_1\phi^h_* + (R^h)^T K_{\tilde{R}^h} (\phi^h - \phi^h_*) + \hat{a}^h e_1}{0} + (\hat{R}^h)^T \grad[h]\hat{\phi}^h \\
	&\to \compose*{K_{\hat{R}} - K_R) \mathbf{\bar{x}} - R^T\partial_1\phi + \hat{a} e_1}{\bar{\grad}\hat{\varphi} - R^T\bar{\grad}\psi} = \hat{G} - G,
\end{align*}
strongly in $\Leb^2(\Omega, \R^{3\times 3})$. From this, we immediately obtain $\hat{G}^h \wto \hat{G}$ in $\Leb^2(\Omega, \R^{3\times 3})$ by \cref{Prop:LowerBoundStrain}, as well as $\hat{y}^h \to \hat{v}$ in $\SobH^1(\Omega, \R^3)$ and $\grad[h]y^h \to \hat{R}$ in $\Leb^2(\Omega, \R^{3\times 3})$. Furthermore, we have
\begin{equation*}
	\Delta^h = R^h(\hat{G}^h - G^h), \qquad
	\hat{\Delta}^h = \Delta^h (\grad[h]y^h)^{-1}.
\end{equation*}
Since by \cref{Prop:Compactness}, $\norm*{\dist\of[\big]{\grad[h]y^h, \SO(3)}}_{\Leb^\infty(\Omega)} \to 0$, it follows that $(\grad[h]y^h)^{-1}$ is bounded in $\Leb^\infty(\Omega, \R^{3\times 3})$ and converging in $\Leb^2(\Omega, \R^{3\times 3})$ to $R^T$. Hence, $\Delta^h \to R(\hat{G} - G)$ and $\hat{\Delta}^h \to R(\hat{G} - G)R^T$ in $\Leb^2(\Omega, \R^{3\times 3})$. Moreover, let $\rho := 1 - \alpha_C - \alpha_R$. Then, in view of $K_{\hat{R}^h} - K_{R^h} = (R^h)^T K_{\tilde{R}^h} R^h$ the representation of $\hat{G}^h - G^h$ above yields,
\begin{equation*}
	\Delta^h = \compose*{K_{\tilde{R}^h}R^h\mathbf{\bar{x}} - \partial_1\phi^h_* + K_{\tilde{R}^h} (\phi^h - \phi^h_*) + R^h\hat{a}^h e_1}{0} + (\tilde{R}^h)^T \grad[h]\hat{\phi}^h.
\end{equation*}
Hence, we obtain the estimate,
\begin{equation*}
	h^\rho\abs*{\Delta^h} + h^\rho \abs*{\hat{\Delta}^h} \leq c \left(
		\kappa(h)\abs*{K_{\tilde{R}^h}} 
		+ h^\rho\abs*{\partial_1\phi^h_*} 
		+ h^\rho\abs*{\hat{a}^h} 
		+ h^\rho\abs*{\grad[h]\hat{\phi}^h},
	\right),
\end{equation*}
and the right-hand side converges to $0$ in $\Leb^\infty(\Omega)$ by construction of the sequences. Finally, we calculate
\begin{align*}
	\partial_i\Delta^h &= \compose*{
	\begin{aligned}
		&(\partial_i K_{\tilde{R}^h})R^h\mathbf{\bar{x}} + K_{\tilde{R}^h}(\partial_i R^h)\mathbf{\bar{x}} + K_{\tilde{R}^h} R^h(\partial_i \mathbf{\bar{x}}) \\
			&- \partial_{i1}\phi^h_* \\
			&+ (\partial_i K_{\tilde{R}^h})(\phi^h - \phi^h_*) + K_{\tilde{R}^h} (\partial_i \phi^h - \partial_i\phi^h_*) \\
			&+ (\partial_i R^h)\hat{a}^he_1 + R^h(\partial_i \hat{a}^h)e_1
	\end{aligned}
	}{0}
	+ (\partial_i \tilde{R}^h)^T \grad[h]\hat{\phi}^h + (\tilde{R}^h)^T (\partial_i \grad[h]\hat{\phi}^h).
\end{align*}
Thanks to \eqref{Eq:ScalingRotation} and using the definition of the sequences combined with \cref{Prop:Compactness}, it is not hard to check that $h^{1-\alpha_C}\norm*{\grad \Delta^h}_{\Leb^p(\Omega)} \to 0$, . Especially, since $\grad[h]\hat{y}^h = \tilde{R}^h(\grad[h]y^h + h\Delta^h)$, we find $\hat{y}^h \in \SobW^{2,p}(\Omega, \R^3)$.
\end{proof}

With this construction, we are able to show \eqref{Eq:LimsupInequalityElasticSeparated}. We start with the estimate for the elastic energy.

\begin{proposition} \label{Prop:LimsupInequlityElasticBending}
Consider the situation of \cref{Prop:ConstructionRecoverySequenceDeformation} and let $(\hat{z}^h) \subset \Leb^2(\Omega, \R^{3\times 3})$ be as in \cref{Prop:LimsupInequalityPlastic}. Then,
\begin{multline}
	\limsup_{h \to 0} \left(h^{-2} \int_\Omega W_\el\big(\grad[h] \hat{y}^h(I + h\hat{z}^h)^{-1}\big) - h^{-2}\int_\Omega W_\el\big(\grad[h] y^h(I + hz^h)^{-1}\big)\right) \\
	\leq \int_\Omega Q_\el\big(\hat{G} - \hat{z}\big) - \int_\Omega Q_\el\big(G - z\big).
\end{multline}
\end{proposition}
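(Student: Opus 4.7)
The overall strategy is to reduce the claim, via frame indifference, to a statement about the linearized strains $G^h, \hat G^h$ (and their limits $G, \hat G$), then apply the careful Taylor expansion of \cref{Lem:CarefulExpansion} on a good set and control the remainder via the continuity condition \eqref{Ass:W_Continuity}. The final passage to the limit crucially uses the \emph{strong} convergence of the \emph{differences} $\hat{G}^h - G^h$ and $\hat{z}^h - z^h$, even though the individual sequences only converge weakly.

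\textbf{Step 1 (Frame indifference).} Since $\grad[h]y^h = R^h(I + hG^h)$ and $\grad[h]\hat y^h = \hat R^h(I + h\hat G^h)$, the frame indifference \eqref{Ass:W_FrameIndifference} yields
\begin{equation*}
	W_\el\bigl(\grad[h] y^h (I + h z^h)^{-1}\bigr) = W_\el\bigl((I + h G^h)(I + h z^h)^{-1}\bigr),
\end{equation*}
and analogously for the hatted quantities. Both integrands are now in the form required by \cref{Lem:CarefulExpansion}.

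\textbf{Step 2 (Good set / bad set decomposition).} Since $G^h, \hat G^h$ are bounded in $L^2(\Omega)$ by \cref{Prop:Compactness,Prop:ConstructionRecoverySequenceDeformation} and similarly for $z^h, \hat z^h$, we apply \cref{Lem:CarefulExpansion} separately to the pairs $(G^h, z^h)$ and $(\hat G^h, \hat z^h)$ with a common weight $\kappa(h) \to 0$ to be chosen. Intersecting, we obtain a set $O^h \subset \Omega$ with $\lvert\Omega \setminus O^h\rvert \to 0$ and $\kappa(h)h^{-2}\lvert\Omega\setminus O^h\rvert \to 0$, on which
\begin{equation*}
	h^{-2}\int_{O^h} W_\el\bigl(\grad[h]\hat y^h(I+h\hat z^h)^{-1}\bigr) - \int_{O^h} Q_\el(\hat G^h - \hat z^h) \to 0,
\end{equation*}
and analogously without hats. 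For the remainder on $\Omega\setminus O^h$, I rewrite $\grad[h]\hat y^h(I+h\hat z^h)^{-1} = \tilde R^h (I + h\hat\Delta^h)F^h M^h$ using \cref{Prop:ConstructionRecoverySequenceDeformation}, where $F^h := \grad[h]y^h(I+hz^h)^{-1}$ and $M^h := (I+hz^h)(I+h\hat z^h)^{-1}$. By frame indifference, $W_\el$ of this equals $W_\el((I + h\hat\Delta^h)F^h M^h)$. Two $L^\infty$ smallness estimates justify applying \eqref{Ass:W_Continuity}: firstly $\lVert h\hat\Delta^h\rVert_{L^\infty} \leq h^{\alpha_C+\alpha_R}\cdot h^{1-\alpha_C-\alpha_R}\lVert\hat\Delta^h\rVert_{L^\infty}\to 0$ by \eqref{Eq:EstimatesDeltah}; secondly, a direct computation using the explicit form \eqref{Eq:RecoverySequencePlastic} gives $M^h - I = -\mathds{1}_{U^h}(\exp(h\tilde z^h) - I)(I+hz^h)(I+h\hat z^h)^{-1}$, and since $\lVert I+hz^h\rVert_{L^\infty}$ and $\lVert(I+h\hat z^h)^{-1}\rVert_{L^\infty}$ are bounded by compactness of $K_\pl$ (\cref{Rem:PlasticProperties}(ii)), while $\lVert h\tilde z^h\rVert_{L^\infty} = o(h^{1/2})$, we conclude $\lVert M^h - I\rVert_{L^\infty} \to 0$. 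Setting $\eta(h) := \lVert h\hat\Delta^h\rVert_{L^\infty} + \lVert M^h - I\rVert_{L^\infty}$, \eqref{Ass:W_Continuity} yields the pointwise bound $\lvert W_\el((I + h\hat\Delta^h)F^h M^h) - W_\el(F^h)\rvert \leq L(W_\el(F^h) + 1)\eta(h)$, hence
\begin{equation*}
	h^{-2}\!\int_{\Omega\setminus O^h} \bigl\lvert W_\el(\grad[h]\hat y^h(I+h\hat z^h)^{-1}) - W_\el(F^h)\bigr\rvert \leq L\eta(h)\Bigl(h^{-2}\!\int_\Omega W_\el(F^h) + h^{-2}\lvert\Omega\setminus O^h\rvert\Bigr),
\end{equation*}
which tends to $0$ upon choosing $\kappa(h) := \eta(h)^{1/2}$.

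\textbf{Step 3 (Quadratic limit).} Steps 1--2 reduce the proposition to the identity
\begin{equation*}
	\lim_{h\to 0}\int_\Omega Q_\el(\hat G^h - \hat z^h) - Q_\el(G^h - z^h) = \int_\Omega Q_\el(\hat G - \hat z) - Q_\el(G - z).
\end{equation*}
Setting $a^h := \hat G^h - \hat z^h$, $b^h := G^h - z^h$, $a := \hat G - \hat z$, $b := G - z$, I use the polarization identity
\begin{equation*}
	Q_\el(a^h) - Q_\el(b^h) = (a^h - b^h) : \mathbb{C}_\el (a^h - b^h) + 2(a^h - b^h) : \mathbb{C}_\el b^h.
\end{equation*}
By \cref{Prop:ConstructionRecoverySequenceDeformation,Prop:LimsupInequalityPlastic}, $a^h - b^h \to a - b$ \emph{strongly} in $L^2$, while $b^h \wto b$ only weakly. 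Strong-weak passage to the limit yields the claim, combining the two terms back via polarization.

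\textbf{Main obstacle.} The delicate point is calibrating the competing smallness scales: the ``bad set'' contribution is small only if $\eta(h) \cdot h^{-2}\lvert\Omega\setminus O^h\rvert \to 0$, and this requires simultaneously that $\eta(h) \to 0$ (ensured by the scaling constraint $\alpha_C + \alpha_R < 1$ together with $h^{1/2}\lVert\tilde z^h\rVert_{L^\infty}\to 0$) and that the freedom in $\kappa$ can be exploited. The strong convergence of the \emph{differences} $\hat G^h - G^h$, rather than the individual strains, is the decisive structural input coming from the multiplicative Cosserat ansatz of \cref{Prop:ConstructionRecoverySequenceDeformation}, without which the quadratic limit in Step 3 would fail.
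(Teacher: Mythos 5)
Your proposal is correct and follows essentially the same route as the paper's proof: frame indifference, the good/bad splitting via \cref{Lem:CarefulExpansion}, the bad set controlled through the representation $\grad[h]\hat{y}^h = \tilde{R}^h(I + h\hat{\Delta}^h)\grad[h]y^h$ and \eqref{Ass:W_Continuity} with $\norm{h\hat{\Delta}^h}_{\Leb^\infty(\Omega)} + \norm{M^h - I}_{\Leb^\infty(\Omega)} \to 0$, and the good part passed to the limit by polarization, pairing the strongly convergent differences $\hat{G}^h - G^h$, $\hat{z}^h - z^h$ against the weakly convergent sums. The only cosmetic imprecision is in Step 3, where the quadratic limit should be stated with the cutoff $\mathds{1}_{O^h}$ (i.e.\ over $O^h$) rather than over $\Omega$; the identical strong--weak argument applies since $\mathds{1}_{O^h} \to 1$ boundedly in $\Leb^\infty$ and strongly in $\Leb^2$, which is exactly how the paper closes this step.
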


\begin{proof}
Given sets $O^h \subset \Omega$, which we define below, we separate the left-hand side into a \enquote{good} part and a \enquote{bad} part
\begin{equation*}
	h^{-2} \int_\Omega W_\el\big(\grad[h] \hat{y}^h(I + h\hat{z}^h)^{-1}\big) - h^{-2}\int_\Omega W_\el\big(\grad[h] y^h(I + hz^h)^{-1}\big)
	 = g^h + b^h,
\end{equation*}
where
\begin{align*}
	g^h :&= h^{-2} \int_{O^h} W_\el\big(\grad[h] \hat{y}^h(I + h\hat{z}^h)^{-1}\big) - h^{-2}\int_{O^h} W_\el\big(\grad[h] y^h(I + hz^h)^{-1}\big) \\
	&= h^{-2} \int_{O^h} W_\el\big((I + h\hat{G}^h)(I + h\hat{z}^h)^{-1}\big) - h^{-2}\int_{O^h} W_\el\big((I + hG^h)(I + hz^h)^{-1}\big) \\
	b^h :&= h^{-2} \int_{\Omega \setminus O^h} W_\el\big(\grad[h] \hat{y}^h(I + h\hat{z}^h)^{-1}\big) - W_\el\big(\grad[h] y^h(I + hz^h)^{-1}\big).
\end{align*}
Recall from \cref{Prop:ConstructionRecoverySequenceDeformation} that $\norm*{h\hat{\Delta}^h}_{\Leb^\infty(\Omega)} \to 0$ and $\norm*{\hat{\Delta}^h}_{\Leb^2(\Omega)}$ is bounded. Set $I + hA^h := (I + hz^h)(I + h\hat{z}^h)^{-1}$. Recall \eqref{Eq:RecoverySequencePlastic} and note that
\begin{equation*}
	I + hA^h = \begin{cases} \exp(-h\tilde{z}^h) & \text{on } U^h, \\ I & \text{on } \Omega \setminus U^h, \end{cases}
\end{equation*}
and thus also $\norm*{hA^h}_{\Leb^\infty(\Omega)} \to 0$ and $\norm*{A^h}_{\Leb^2(\Omega)}$ is bounded. We seek to apply the expansion \eqref{Ass:W_Expansion} to treat the \enquote{good} part $g^h$ and show that the \enquote{bad} part $b^h$ vanishes. Therefore, we define $O^h$ according to \cref{Lem:CarefulExpansion}, such that $\abs*{\Omega \setminus O^h} \to 0$, $\left(\norm*{hA^h}_{\Leb^\infty(\Omega)} + \norm*{h\hat{\Delta}^h}_{\Leb^\infty(\Omega)}\right)h^{-2}\abs*{\Omega \setminus O^h} \to 0$ and
\begin{align*}
	\limsup_{h \to 0} g^h 
	&= \limsup_{h \to 0} \left( \int_{O^h} Q_\el(\hat{G}^h - \hat{z}^h) - \int_{O^h} Q_\el(G^h - z^h) \right) \\
	&= \limsup_{h \to 0} \int_{O^h} (\hat{G}^h - G^h + \hat{z}^h - z^h) : \mathbb{C}_\el (\hat{G}^h + G^h - \hat{z}^h - z^h).
\end{align*}
Since $\hat{G}^h - G^h \to \hat{G} - G$ and $\hat{z}^h - z^h \to \hat{z} - z$ strongly in $\Leb^2(\Omega, \R^{3 \times 3})$, the latter is a product of a strongly and a weakly converging sequence. Thus, we can pass to the limit and obtain
\begin{equation*}
	\limsup_{h \to 0} g^h = \int_\Omega (\hat{G} - G + \hat{z} - z) : \mathbb{C}_\el (\hat{G} + G - \hat{z} - z) = \int_\Omega Q_\el\big(\hat{G} - \hat{z}\big) - Q_\el\big(G - z\big).
\end{equation*}
Note that we use that $\mathds{1}_{O^h}(\hat{G}^h + G^h - \hat{z}^h - z^h) \wto (\hat{G} + G - \hat{z} - z)$, which is a consequence of $(\mathds{1}_{O^h})$ being bounded in $\Leb^\infty(\Omega)$ and $\mathds{1}_{O^h} \to 1$ in $\Leb^2(\Omega)$. To argue on the \enquote{bad} part, we utilize the representation $\grad[h] \hat{y}^h = \tilde{R}^h(I + h\hat{\Delta}^h)\grad[h]y^h$.
Let $F^h_\el := \grad[h] y^h(I + hz^h)^{-1}$. From \eqref{Ass:W_Continuity} we obtain
\begin{align*}
	\limsup_{h \to 0} b^h 
	&= \limsup_{h \to 0} h^{-2}\left( \int_{\Omega \setminus O^h} W_\el\big((I + h\hat{\Delta}^h)F^h_\el(I + hA^h)\big) - W_\el\big(F^h_\el\big)\right) \\
	&\leq L\limsup_{h \to 0} \left( \int_{\Omega \setminus O^h} h^{-2}\Big( W\big(F^h_\el\big) + 1 \Big)\Big( \abs*{h\hat{\Delta}^h} + \abs*{hA^h} \Big) \right) \\
	&\leq L\limsup_{h \to 0} \begin{aligned}[t]
		\bigg(\Big(\mathcal{E}^h_\el(y^h, z^h) + h^{-2}\abs*{\Omega \setminus O^h}\Big) \Big( \norm[\big]{h\hat{\Delta}^h}_{\Leb^\infty(\Omega)} + \norm[\big]{hA^h}_{\Leb^\infty(\Omega)} \Big)\bigg) = 0,
	\end{aligned}
\end{align*}
which finishes the proof.
\end{proof}

Next, we show that the strain gradient terms vanish in the limit along the recovery sequence.

\begin{proposition} \label{Prop:LimsupInequlityElasticStrainGradient}
Consider the situation of \cref{Prop:ConstructionRecoverySequenceDeformation} and let $(\hat{z}^h) \subset \Leb^2(\Omega, \R^{3\times 3})$ as in \cref{Prop:LimsupInequalityPlastic}. Then, \eqref{Eq:LimsupInequalityElasticStrainGradientC} and \eqref{Eq:LimsupInequalityElasticStrainGradientR} hold.
\end{proposition}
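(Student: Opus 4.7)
The plan is to tackle both strain-gradient estimates \eqref{Eq:LimsupInequalityElasticStrainGradientC} and \eqref{Eq:LimsupInequalityElasticStrainGradientR} by the same reduction: apply the $p$-Lipschitz bound \eqref{Ass:HE_Continuity} pointwise and Hölder's inequality with exponents $p/(p-1)$ and $p$, in order to reduce each assertion to the claim that a certain strain-gradient difference is small in $\Leb^p(\Omega)$. For $H^C_\el$ the target is $h^{-\alpha_C}\norm*{\grad\sqrt{C_{\hat y^h}} - \grad\sqrt{C_{y^h}}}_{\Leb^p(\Omega)} \to 0$, while for $H^R_\el$, after exploiting the frame-indifference \eqref{Ass:HE_FrameIndifference} to replace $H^R_\el(\grad R_{\hat y^h})$ pointwise by $H^R_\el((\tilde R^h)^T\grad R_{\hat y^h})$, the target is $h^{\alpha_R}\norm*{(\tilde R^h)^T\grad R_{\hat y^h} - \grad R_{y^h}}_{\Leb^p(\Omega)} \to 0$. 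In both cases the prefactor is dictated by the uniform $L^p$ estimates of \eqref{Eq:CompactnessStrainGradientsBound}.

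\textbf{The $H^C_\el$ estimate.} I would exploit the multiplicative structure $\grad[h]\hat y^h = \tilde R^h(\grad[h]y^h + h\Delta^h)$ from \cref{Prop:ConstructionRecoverySequenceDeformation}, which yields, since $\tilde R^h \in \SO(3)$, the clean identity $C_{\hat y^h} - C_{y^h} = 2h\sym((\grad[h]y^h)^T\Delta^h) + h^2(\Delta^h)^T\Delta^h$, so all estimates can be expressed in terms of the controlled correction $\Delta^h$. Since $\sqrt{C_{y^h}}$ and $\sqrt{C_{\hat y^h}}$ are uniformly close to $I$ in $L^\infty$, the matrix square-root map is smooth with bounded, Lipschitz differential at the relevant base points, giving the pointwise inequality
\begin{equation*}
    \abs*{\grad\sqrt{C_{\hat y^h}} - \grad\sqrt{C_{y^h}}} \leq c\left(\abs*{C_{\hat y^h} - C_{y^h}}\abs*{\grad C_{\hat y^h}} + \abs*{\grad C_{\hat y^h} - \grad C_{y^h}}\right).
\end{equation*}
Inserting $\|\Delta^h\|_{\Leb^\infty} = o(h^{\alpha_C+\alpha_R-1})$ and $\|\grad\Delta^h\|_{\Leb^p} = o(h^{\alpha_C-1})$ from \eqref{Eq:EstimatesDeltah}, together with $\|\grad\grad[h]y^h\|_{\Leb^p} = O(h^{-\alpha_R})$ from \eqref{Eq:CompactnessStrainGradientsBound}, each contribution is of order $o(h^{\alpha_C})$ in $L^p$, concluding the $C$-estimate.

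\textbf{The $H^R_\el$ estimate.} Here I would pass through the polar decomposition. By uniqueness of the polar decomposition applied to
\[
\grad[h]\hat y^h = \tilde R^h(I + h\hat\Delta^h)R_{y^h}\sqrt{C_{y^h}} = R_{\hat y^h}\sqrt{C_{\hat y^h}},
\]
one obtains the factorization $R_{\hat y^h} = \tilde R^h R_{y^h} Q^h$ with $Q^h := R_{y^h}^T(I + h\hat\Delta^h)R_{y^h}\cdot\sqrt{C_{y^h}}(\sqrt{C_{\hat y^h}})^{-1} \in \SO(3)$. Differentiating this factorization and using that $\tilde R^h$ depends only on $x_1$ with $\partial_1\tilde R^h = \tilde R^h K_{\tilde R^h}$, the difference $(\tilde R^h)^T\grad R_{\hat y^h} - \grad R_{y^h}$ decomposes as the sum of $\mathbb{K}^h R_{y^h} Q^h$, $(\grad R_{y^h})(Q^h - I)$, and $R_{y^h}\grad Q^h$, where $\mathbb{K}^h := (K_{\tilde R^h}, 0, 0)$. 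Each is controllable in the right scaling: the first by $\|K_{\tilde R^h}\|_{\Leb^\infty} = o(h^{-\alpha_R})$, via \eqref{Eq:ScalingRotation} and the fact that $\kappa(h) \geq h^{\alpha_R}$ by the definition in \cref{Prop:ConstructionRecoverySequenceDeformation}; the second by $\|Q^h - I\|_{\Leb^\infty} = o(h^{\alpha_C+\alpha_R})$, a consequence of the smallness of $h\hat\Delta^h$ and of $\sqrt{C_{\hat y^h}} - \sqrt{C_{y^h}}$ in $L^\infty$; and the third by expanding $\grad Q^h$ and exploiting the identity $(\grad R_{y^h}^T)R_{y^h} + R_{y^h}^T(\grad R_{y^h}) = 0$ to cancel the a priori divergent contribution $\|\grad R_{y^h}\|_{\Leb^p} = O(h^{-\alpha_R})$, bringing $\|\grad Q^h\|_{\Leb^p}$ down to order $O(h^{\alpha_C})$.

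\textbf{Main obstacle.} The principal technical difficulty is the careful bookkeeping of these competing scalings. Because $\|\grad R_{y^h}\|_{\Leb^p}$ and $\|\grad\grad[h]y^h\|_{\Leb^p}$ diverge as $h^{-\alpha_R}$, every term in which a first derivative of the rotation or deformation fails to cancel must be absorbed by a compensating factor of order $h^{\alpha_R}$ or better. The condition $\alpha_R < \frac{2}{3}(1-\alpha_C)$ and the choice of $\kappa(h)$ in \cref{Prop:ConstructionRecoverySequenceDeformation} are precisely calibrated so that $h^{1-\alpha_C-\alpha_R}\|\hat\Delta^h\|_{\Leb^\infty} \to 0$ and $h^{\alpha_R}\|K_{\tilde R^h}\|_{\Leb^\infty} \to 0$ hold simultaneously, which allows all the error terms in the decompositions above to close.
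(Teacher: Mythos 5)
Your proposal is correct in substance, and for the $C$-term it is essentially the paper's argument: the same reduction via \eqref{Ass:HE_Continuity} and Hölder, the same identities for $C_{\hat y^h}-C_{y^h}$ and its gradient coming from $\grad[h]\hat y^h=\tilde R^h(\grad[h]y^h+h\Delta^h)$, and the same scalings from \eqref{Eq:EstimatesDeltah} and \eqref{Eq:CompactnessStrainGradientsBound}; the only difference is that you estimate the square-root map directly through its Lipschitz differential near $I$, whereas the paper packages exactly this step into the equivalence \cref{Lem:EquivalenceRestTerms} (you should still record, as the paper does, that $\norm{C_{\hat y^h}-I}_{\Leb^\infty}$ stays of order $h^{\alpha_C}$ so that both arguments lie in the good neighborhood). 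For the $R$-term your route genuinely differs: the paper never differentiates the polar rotations at all — after frame indifference it invokes \cref{Lem:EquivalenceRestTerms}(c) with $A^h=(R^h)^T$, $\hat A^h=(\hat R^h)^T$ to reduce to the second gradients $(\hat R^h)^T\grad\grad[h]\hat y^h-(R^h)^T\grad\grad[h]y^h$, which the multiplicative ansatz makes a two-line computation controlled by $h^{\alpha_R}\norm{\partial_1\tilde R^h}_{\Leb^p}\to 0$ and $h\norm{\grad\Delta^h}_{\Leb^p}\to 0$ — while you differentiate the explicit polar factorization $R_{\hat y^h}=\tilde R^hR_{y^h}Q^h$ and cancel the divergent $\grad R_{y^h}$ contributions by hand using $\partial_i(R_{y^h}^TR_{y^h})=0$. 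Your decomposition into the three terms is correct and closes with the stated scalings; the price of being self-contained is some extra bookkeeping the paper avoids, notably that estimating $R_{y^h}\grad Q^h$ also produces the term $h\,R_{y^h}^T(\grad\hat\Delta^h)R_{y^h}\sqrt{C_{y^h}}(\sqrt{C_{\hat y^h}})^{-1}$, and a bound $h\norm{\grad\hat\Delta^h}_{\Leb^p}=\oclass(h^{\alpha_C})$ is not listed in \eqref{Eq:EstimatesDeltah}; it does follow from $\hat\Delta^h=\Delta^h(\grad[h]y^h)^{-1}$ together with $h^{1-\alpha_C}\norm{\grad\Delta^h}_{\Leb^p}\to 0$, $h^{1-\alpha_C-\alpha_R}\norm{\Delta^h}_{\Leb^\infty}\to 0$ and $h^{\alpha_R}\norm{\grad\grad[h]y^h}_{\Leb^p}$ bounded, but you should state this, and likewise the bound $\norm{\grad\sqrt{C_{\hat y^h}}}_{\Leb^p}=\Oclass(h^{\alpha_C})$ you use when differentiating $\sqrt{C_{y^h}}(\sqrt{C_{\hat y^h}})^{-1}$, which is supplied by your Step on the $C$-term. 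In short: same ingredients and same critical scalings (in particular $h^{1-\alpha_C-\alpha_R}\norm{\Delta^h}_{\Leb^\infty}\to 0$, guaranteed by $\alpha_R<\frac23(1-\alpha_C)$), but the paper buys a shorter main proof by delegating all polar-decomposition calculus to the appendix lemma, whereas your argument is more explicit and avoids that lemma at the cost of the $Q^h$ manipulations.
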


\begin{proof}
\stepemph{Step 1 -- Proof of \eqref{Eq:LimsupInequalityElasticStrainGradientC}:}
From \eqref{Ass:HE_Continuity} and Hölder's inequality, we infer
\begin{align*}
	&h^{-\alpha_C p} \int_\Omega H_\el^C\big(\grad \sqrt{C_{\hat{y}^h}}\big) - H_\el^C \big(\grad \sqrt{C_{y^h}}\big) \\
	&\quad\leq c_1 \int_\Omega \left( \abs[\Big]{h^{-\alpha_C}\grad \sqrt{C_{\hat{y}^h}}}^{p-1} + \abs[\Big]{h^{-\alpha_C}\grad \sqrt{C_{y^h}}}^{p-1} \right) h^{-\alpha_C}\abs[\Big]{\grad \sqrt{C_{\hat{y}^h}} - \grad \sqrt{C_{y^h}}} \\
	&\quad\leq c_1 \left( \norm[\Big]{h^{-\alpha_C}\grad \sqrt{C_{\hat{y}^h}}}_{\Leb^p(\Omega)}^{p-1} + \norm[\Big]{h^{-\alpha_C}\grad \sqrt{C_{y^h}}}_{\Leb^p(\Omega)}^{p-1} \right) h^{-\alpha_C} \norm[\Big]{\grad \sqrt{C_{\hat{y}^h}} - \grad \sqrt{C_{y^h}}}_{\Leb^p(\Omega)}.
\end{align*}
By \cref{Prop:Compactness}, we know $\limsup_{h \to 0} \norm[\Big]{h^{-\alpha_C}\grad \sqrt{C_{y^h}}}_{\Leb^p(\Omega)}^{p-1} < \infty$. Thus, to conclude the claim it remains to show
\begin{equation*}
	\lim_{h \to 0} h^{-\alpha_C}\norm[\Big]{\grad \sqrt{C_{\hat{y}^h}} - \grad \sqrt{C_{y^h}}}_{\Leb^p(\Omega)} = 0.
\end{equation*}
From the representation $\grad[h] \hat{y}^h = \tilde{R}^h(\grad[h]y^h + h\Delta^h)$, defined in \cref{Prop:ConstructionRecoverySequenceDeformation}, we infer,
\begin{align*}
	C_{\hat{y}^h} - C_{y^h} &= 2h \sym\big( (\Delta^h)^T \grad[h]y^h \big) + h^2 (\Delta^h)^T \Delta^h, \\
	\partial_i (C_{\hat{y}^h} - C_{y^h}) &= 2h\sym\big( (\partial_i \Delta^h)^T \grad[h]y^h + (\Delta^h)^T (\partial_i\grad[h]y^h) \big) + 2h^2 \sym \big( (\partial_i\Delta^h)^T \Delta^h \big).
\end{align*}
We claim that
\begin{equation*}
	\limsup_{h \to 0} h^{-\alpha_C} \norm*{C_{\hat{y}^h} - I}_{\Leb^\infty(\Omega)} < \infty.
\end{equation*}
Indeed, for $C_{\hat{y}^h}$ replaced by $C_{y^h}$ this follows from \cref{Prop:Compactness,Lem:EquivalenceRestTerms} and then, we may use the identity for $C_{\hat{y}^h} - C_{y^h}$ together with \eqref{Eq:EstimatesDeltah} to establish the claimed estimate.
Thus, we may apply \cref{Lem:EquivalenceRestTerms} and it suffices to establish
\begin{equation*}
	\lim_{h \to 0} h^{-\alpha_C}\norm[\Big]{\grad C_{\hat{y}^h} - \grad C_{y^h}}_{\Leb^p(\Omega)} = 0.
\end{equation*}
But, using the identity for $\partial_i (C_{\hat{y}^h} - C_{y^h})$ above, this is a consequence of the estimates \eqref{Eq:EstimatesDeltah} and the fact that $\limsup_{h \to 0} h^{\alpha_R}\norm*{\grad\grad[h]y^h}_{\Leb^p(\Omega)} < \infty$ by another application of \cref{Lem:EquivalenceRestTerms}.
\medskip

\stepemph{Step 2 -- Proof of \eqref{Eq:LimsupInequalityElasticStrainGradientR}:}
Analogously to the previous step, \eqref{Ass:HE_Continuity} and Hölder's inequality yield
\begin{align*}
	&h^{\alpha_R p} \int_\Omega H_\el^R\big(\grad R_{\hat{y}^h}\big) - H_\el^R \big(\grad R_{y^h}\big) \\
	&\quad = h^{\alpha_R p} \int_\Omega H_\el^R\big((\hat{R}^h)^T\grad R_{\hat{y}^h}\big) - H_\el^R \big((R^h)^T\grad R_{y^h}\big) \\
	&\quad\leq c_1 \left( \norm*{h^{\alpha_R}\grad R_{\hat{y}^h}}_{\Leb^p(\Omega)}^{p-1} + \norm*{h^{\alpha_R}\grad R_{y^h}}_{\Leb^p(\Omega)}^{p-1} \right) h^{\alpha_R} \norm*{(\hat{R}^h)^T\grad R_{\hat{y}^h} - (R^h)^T\grad R_{y^h}}_{\Leb^p(\Omega)}.
\end{align*}
It remains to show
\begin{equation*}
	\lim_{h \to 0} h^{\alpha_R} \norm*{(\hat{R}^h)^T\grad R_{\hat{y}^h} - (R^h)^T\grad R_{y^h}}_{\Leb^p(\Omega)} = 0.
\end{equation*}
In view of \cref{Lem:EquivalenceRestTerms}, it suffices to establish
\begin{equation*}
	\lim_{h \to 0} h^{\alpha_R} \norm*{(\hat{R}^h)^T\grad\grad[h]\hat{y}^h - (R^h)^T\grad\grad[h] y^h}_{\Leb^p(\Omega)} = 0.
\end{equation*}
From the representation $\grad[h] \hat{y}^h = \tilde{R}^h(\grad[h] y^h + h\Delta^h)$, we obtain
\begin{align*}
	\partial_i \grad[h]\hat{y}^h &= (\partial_i \tilde{R}^h)(\grad[h]y^h + h\Delta^h) + \tilde{R}^h(\partial_i \grad[h]y^h + h \partial_i \Delta^h), \\
	(\hat{R}^h)^T\partial_i \grad[h]\hat{y}^h - (R^h)^T\partial_i \grad[h]y^h &= (\hat{R}^h)^T(\partial_i \tilde{R}^h)(\grad[h]y^h + h\Delta^h) + h(R^h)^T \partial_i \Delta^h.
\end{align*}
Thus, the claim follows from the convergences
\begin{equation*}
	h^{\alpha_R} \norm*{\partial_1 \tilde{R}^h}_{\Leb^p(\Omega)} \to 0, \qquad
	h \norm*{\grad \Delta^h}_{\Leb^p(\Omega)} \to 0.
\qedhere
\end{equation*}
\end{proof}
\medskip %% In combination with \qedhere

\begin{remark}
The critical point in the argument above is that the term $h^{1-\alpha_C}(\Delta^h)^T(\partial_i \grad[h]y^h)$ in the identity for $\partial_i (C_{\hat{y}^h} - C_{y^h})$ vanishes. We conclude this by appealing to $h^{1-\alpha_C-\alpha_R}\norm*{\Delta^h}_{\Leb^\infty(\Omega)} \to 0$ which we in turn, as discussed in \cref{Rem:alpha_q}, obtain from the assumption $\alpha_R < \frac{2}{3}(1- \alpha_C)$.
\end{remark}

Now, \cref{Thm:EvolutionaryGammaConvergence} (c) is a straight-forward corollary of \cref{Prop:LimsupInequalityPlastic,Prop:LimsupInequlityElasticBending,Prop:LimsupInequlityElasticStrainGradient}. For the readers convenience we carry out the arguments.

\begin{proof}[Proof of \cref{Thm:EvolutionaryGammaConvergence} (c)]
Let $(y^h, z^h) \subset \mathcal{Q}^h$ satisfy \eqref{Eq:BoundedEnergy} and $(v,R,z) \in \mathcal{Q}^0$ be such that the convergences \eqref{Eq:ConvergenceOfEnergeticSolutions} hold. Consider $(v^h, R^h) \in \mathcal{A}_\mathrm{rod}$ and $\phi^h \in \SobW^{1,\infty}(\Omega, \R^{3\times 3})$ as in \cref{Prop:Compactness}. By \cref{Prop:Compactness}, for any subsequence, we find a further subsequence (not relabeled), such that the convergences \eqref{Eq:ConvergencesFromCompactnessAdvanced} hold for some $\phi \in \SobH^1(\omega, \R^3)$ and $\psi \in \Leb^2(\omega, \SobH^1(S, \R^3))$. We construct the recovery sequence for this subsequence. Let $(\hat{v},\hat{R},\hat{z}) \in \Leb^2(\Omega, \R^{3\times 3})$. By \eqref{Eq:CharacterizationLimitingEnergy} we find $\hat{a} \in \Leb^2(\omega)$ and $\hat{\varphi} \in \Leb^2(\omega, \SobH^1(S, \R^3))$, such that
\begin{equation*}
	\int_\Omega Q_\el\Big(\compose*{K_{\hat{R}} \mbox{$\mathbf{\bar{x}}$} + \hat{a}e_1}{\bar{\grad}\hat{\varphi}} - \hat{z}\Big) = \mathcal{E}^0_\el(\hat{v},\hat{R},\hat{z}).
\end{equation*}
%% Weird bug: for some reason \mathbf{\bar{x}} is not displayed here without \mbox{} around it
Define $(\hat{y}^h, \hat{z}^h)$ as in \cref{Prop:LimsupInequalityPlastic,Prop:ConstructionRecoverySequenceDeformation}. Note that without loss of generality we may assume that $\hat{z} - z \in \Leb^2(\Omega, \R^{3\times 3}_\mathrm{dev})$, since otherwise $\mathcal{D}^0(z,\hat{z}) = \infty$. Then, \cref{Prop:LimsupInequalityPlastic,Prop:LimsupInequlityElasticBending,Prop:LimsupInequlityElasticStrainGradient} yield
\begin{align*}
	&\limsup_{h \to 0} \Big(\mathcal{E}_{\el+\pl}^h(\hat{y}^h, \hat{z}^h) + \mathcal{D}^h(z^h, \hat{z}^h) - \mathcal{E}_{\el+\pl}^h(y^h, z^h)\Big) \\
	&\qquad\leq \int_\Omega Q_\el\big(\hat{G} - \hat{z}\big) + \mathcal{E}^0_\pl(\hat{z}) + \mathcal{D}^0(z,\hat{z}) - \int_\Omega Q_\el\big(G - z\big) - \mathcal{E}^0_\pl(z) \\
	&\qquad = \mathcal{E}^0_{\el+\pl}(\hat{v},\hat{R},\hat{z}) + \mathcal{D}^0(z,\hat{z}) - \int_\Omega Q_\el\big( \compose*{K_R\mbox{$\mathbf{\bar{x}}$} + ae_1}{\bar{\grad}\varphi} - z\big) - \mathcal{E}^0_\pl(z),
\end{align*}
where $G$ and $\hat{G}$ as in \cref{Prop:ConstructionRecoverySequenceDeformation}. This shows \eqref{Eq:MututalLimsupInequality}, since by \eqref{Eq:CharacterizationLimitingEnergy}
\begin{equation*}
	\int_\Omega Q_\el\big( \compose*{K_R\mbox{$\mathbf{\bar{x}}$} + ae_1}{\bar{\grad}\varphi} - z\big) \geq \mathcal{E}^0_\el(v,R,z).
\qedhere
\end{equation*}
\end{proof}
\bigskip %% In combination with \qedhere

\subsection{Convergence of solutions; Proof of Theorems \ref{Thm:MainResultConvergenceOfSolutions} and \ref{Thm:MainResultConvergenceOfApproximateSolutions}}
\label{Sec:ConvergenceOfSolutions}

With \cref{Thm:EvolutionaryGammaConvergence} at hand, \cref{Thm:MainResultConvergenceOfSolutions,Thm:MainResultConvergenceOfApproximateSolutions} can be shown following the general strategy of \cite{MS13,Dav14_02} which themselves depend on the general theory introduced in \cite{MRS08}. We also use ideas of \cite[Thm.~2.1.6]{MR15} in the proof. To unify the statement about the convergences of solutions and time-discretized (approximate) solutions, we introduce the following notion of approximate solutions.

\begin{definition}[Approximate solutions]
Consider a sequence of ERIS given by $\mathcal{M}^h := (\mathcal{Q}, \mathcal{E}^h, \mathcal{D}^h)$, where $\mathcal{Q} = \mathcal{Y} \times \mathcal{Z}$ is a set, $\mathcal{E}^h: [0,T] \times \mathcal{Q} \to (-\infty, \infty]$ and $\mathcal{D}^h: \mathcal{Z} \times \mathcal{Z} \to [0,\infty]$. We call trajectories $q^h=(y^h,z^h): [0,T] \to \mathcal{Q}$ a sequence of \emph{approximate solutions}, if we find a map $\kappa: [0,\infty) \to [0,\infty)$ with $\lim_{h \to 0} \kappa(h) = 0$, such that
\begin{enumerate}[(a)]
	\item (Approximate stability). For all $h > 0$ and $t \in [0,T]$, $\mathcal{E}^h(t,q^h(t)) < \infty$ and we find a sequence $(t^h) \subset [0,T]$ with $t^h \to t$, such that for all $\hat{q}=(\hat{y},\hat{z}) \in \mathcal{Q}$, we have
	\begin{equation} \label{Eq:ApproxStability}
		\mathcal{E}^h(t^h,\hat{q}) + \mathcal{D}^h(z^h(t), \hat{z}) - \mathcal{E}^h(t^h,q^h(t)) \geq -\kappa(h).
	\end{equation}
	\item (Approximate energy balance). $[0,T] \ni t \mapsto \partial_t \mathcal{E}^h(t,q^h(t))$ is well-defined and integrable and for any $t \in [0,T]$,
	\begin{equation} \label{Eq:ApproxEnergyBalance}
		\mathcal{E}^h(t, q^h(t)) + \operatorname{Diss}_{\mathcal{M}^h}(z^h; [0,t]) \leq \mathcal{E}^h(0, q^h(0)) + \int_0^t \partial_s \mathcal{E}^h(s,q^h(s)) \,\dd s + \kappa(h).
	\end{equation}
\end{enumerate}
\end{definition}

\begin{remark}
Obviously any sequence of energetic solutions is a sequence of approximate solutions. Moreover, it is not hard to show that (approximate) solutions to the time-discretized problem, see \cref{Thm:MainResultConvergenceOfApproximateSolutions}, are also approximate solutions, cf.\ \cite[Thm.~3.4]{MRS08} and \cite[Thm.~6.2]{Dav14_02}, if the initial data $q^h_0 = (y^h_0, z^h_0) \in \mathcal{Q}^h_{(v_\mathrm{bc}, R_\mathrm{bc})}$ is approximately stable, i.e.\ satisfies
\begin{equation} \label{Eq:InitialData}
	\mathcal{E}^h(0, \hat{q}) + \mathcal{D}^h(z^h_0, \hat{z}) - \mathcal{E}^h(0, q^h_0) \geq -\kappa(h) \quad
	\text{for all } \hat{q} = (\hat{y}, \hat{z}) \in \mathcal{Q}^h_{(v_\mathrm{bc}, R_\mathrm{bc})}.
\end{equation}
Especially, since (approximate) solutions to the time-discretized problem always exist, provided \eqref{Eq:InitialData}, the set of sequences of approximate solutions is not empty.
\end{remark}

We unify and refine the statements of \cref{Thm:MainResultConvergenceOfSolutions,Thm:MainResultConvergenceOfApproximateSolutions} in the following proposition.

\begin{proposition} \label{Prop:ConvergenceOfSolutions}
Consider \cref{Ass:ElasticMaterialLaw,Ass:PlasticMaterialLaw}. Let $(v_\mathrm{bc}, R_\mathrm{bc}) \in \R^3 \times \SO(3)$ and $q^h_0=(y^h_0,z^h_0) \in \mathcal{Q}^h_{(v_\mathrm{bc}, R_\mathrm{bc})}$, $q^0_0=(v^0_0, R^0_0, z^0_0) \in \mathcal{Q}^0_{(v_\mathrm{bc}, R_\mathrm{bc})}$ satisfying \eqref{Eq:InitialData}, as well as
\begin{subequations}
\begin{align}
 	z^h_0 &\wto z^0_0 &&\text{in } \Leb^2(\Omega, \R^{3\times 3}), \\
 	y^h_0 &\to v_0 &&\text{in } \SobH^1(\Omega, \R^3), \\
 	\grad[h]y^h_0 &\to R^0_0 &&\text{in } \Leb^2(\Omega, \R^{3\times 3}), \\
 	\mathcal{E}^h(0, q^h_0) &\to \mathcal{E}^0(0,q^0_0) < \infty.
\end{align}
\end{subequations}
Let $q^h = (y^h, z^h)$ be a sequence of approximate solutions to the ERIS $\mathcal{M}^h = (\mathcal{Q}^h_{(v_\mathrm{bc}, R_\mathrm{bc})}, \mathcal{E}^h, \mathcal{D}^h)$ with $q^h(0) = q^h_0$. There exists an energetic solution $q=(v,R,z)$ to $\mathcal{M}^0 = (\mathcal{Q}^0_{(v_\mathrm{bc}, R_\mathrm{bc})}, \mathcal{E}^0, \mathcal{D}^0)$ with $q(0) = q^0_0$, such that up to a subsequence (independent of $t$, not relabeled), 
\begin{subequations} \label{Eq:ConvergenceOfApproxSolutions}
\begin{align}
	z^h(t) &\wto z(t) && \text{in } \Leb^2(\Omega, \R^{3\times 3}) \text{ for all } t \in [0,T], \\
	\mathcal{E}^h(t,q^h(t)) &\to \mathcal{E}^0(t,q(t)) && \text{for all } t \in [0,T], \\
	\partial_t \mathcal{E}^h(t,q^h(t)) &\to \partial_t\mathcal{E}^0(t,q(t)) && \text{for a.e.\ } t \in [0,T] \text{ and in } \Leb^1([0,T]),\\
	\operatorname{Diss}_{\mathcal{M}^h}(z^h;[0,t]) &\to \operatorname{Diss}_{\mathcal{M}^0}(z;[0,t]) && \text{for all } t \in [0,T].
\end{align}
\end{subequations}
Moreover for any $t \in [0,T]$, up to a further, $t$-dependent subsequence (not relabeled),
\begin{subequations} \label{Eq:ConvergenceOfApproxSolutionsDeformation}
\begin{align}
	y^h(t) &\to v(t) && \text{in } \Leb^2(\Omega, \R^3), \\
	\grad[h]y^h(t) &\to R(t) && \text{in } \Leb^2(\Omega, \R^{3\times 3}).
\end{align}
\end{subequations}
\end{proposition}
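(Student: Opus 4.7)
The plan is to follow the abstract scheme for evolutionary $\Gamma$-convergence of rate-independent systems established in \cite{MRS08} and adapted to finite plasticity in \cite{MS13,Dav14_02}. All the ingredients needed for this scheme have been established earlier: compactness, the two liminf inequalities, and the mutual recovery sequence, together gathered in \cref{Thm:EvolutionaryGammaConvergence}; additionally, the well-preparedness of the initial data is part of the hypothesis. The proof will consist of four stages: (i) uniform a priori bounds plus a Helly-type selection for the plastic variable; (ii) extraction of pointwise limits; (iii) verification of global stability and (iv) verification of the energy balance for the limit, followed by upgrading the convergences.

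In the first stage I would use the approximate energy balance \eqref{Eq:ApproxEnergyBalance} together with the assumption $l \in \SobW^{1,1}([0,T], \SobH^1(\Omega, \R^3))$ and a Gronwall-type argument to derive the uniform bound $\sup_{h,t} \mathcal{E}_{\el+\pl}^h(q^h(t)) + \operatorname{Diss}_{\mathcal{M}^h}(z^h; [0,T]) < \infty$. Since the dissipation distance $\mathcal{D}^h$ controls $\norm{z^h(t) - z^h(s)}_{\Leb^1(\Omega)}$ up to a constant (thanks to the bound $\mathscr{R}_\pl(\dot F) \geq c |\dot F|$ from \cref{Def:DissipationClass_RP}), the $\operatorname{Diss}$-bound yields a uniform $\mathrm{BV}([0,T]; \Leb^1(\Omega))$-bound on $z^h$. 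Together with the uniform $\Leb^2$-bound from \cref{Thm:EvolutionaryGammaConvergence}(a), a generalized Helly selection principle (as in \cite[Thm.~3.2]{MRS08}) allows me to extract one single ($t$-independent) subsequence such that $z^h(t) \wto z(t)$ in $\Leb^2(\Omega, \R^{3\times 3})$ for every $t \in [0,T]$. For each individual $t$, a further $t$-dependent application of \cref{Thm:EvolutionaryGammaConvergence}(a) then produces $(v(t), R(t)) \in \mathcal{Q}^0_{(v_\mathrm{bc}, R_\mathrm{bc})}$ with the convergences \eqref{Eq:ConvergenceOfApproxSolutionsDeformation}.

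To verify global stability of $q(t) = (v(t), R(t), z(t))$, I fix $\hat{q} = (\hat v, \hat R, \hat z) \in \mathcal{Q}^0_{(v_\mathrm{bc}, R_\mathrm{bc})}$ and apply \cref{Thm:EvolutionaryGammaConvergence}(c) to produce, along a further subsequence, a mutual recovery sequence $(\hat y^h, \hat z^h)$. Plugging this into the approximate stability inequality \eqref{Eq:ApproxStability} (with the perturbed time $t^h$ absorbed using continuity of the loading) and letting $h \to 0$ yields $\mathcal{E}^0(t,\hat q) + \mathcal{D}^0(z(t), \hat z) - \mathcal{E}^0(t, q(t)) \geq 0$, i.e.\ $q(t) \in S_{\mathcal{M}^0}(t)$. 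For the energy balance, the combination of \cref{Thm:EvolutionaryGammaConvergence}(b) with weak lower semicontinuity of $\operatorname{Diss}$ applied to \eqref{Eq:ApproxEnergyBalance} provides the $\leq$-direction. The reverse inequality is the standard consequence of global stability: along any partition $0 = t_0 < \dots < t_N = t$ one compares $\mathcal{E}^0(t_{i-1}, q(t_{i-1})) \leq \mathcal{E}^0(t_{i-1}, q(t_i)) + \mathcal{D}^0(z(t_{i-1}), z(t_i))$ from stability at $t_{i-1}$, writes $\mathcal{E}^0(t_{i-1}, q(t_i)) = \mathcal{E}^0(t_i, q(t_i)) - \int_{t_{i-1}}^{t_i} \partial_s \mathcal{E}^0(s, q(t_i)) \,\dd s$, and refines the partition, see \cite[Thm.~2.1.6]{MR15}. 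Matching upper and lower limits in the energy balance forces convergence $\mathcal{E}^h(t, q^h(t)) \to \mathcal{E}^0(t, q(t))$ and $\operatorname{Diss}_{\mathcal{M}^h}(z^h; [0,t]) \to \operatorname{Diss}_{\mathcal{M}^0}(z; [0,t])$; convergence of $\partial_t \mathcal{E}^h(t,q^h(t))$ in $\Leb^1([0,T])$ follows from its explicit representation as $-\int_\Omega \partial_t l(t, \cdot) \cdot y^h(t)$, dominated convergence, and the a.e.\ convergence $y^h(t) \to v(t)$ up to a $t$-dependent subsequence.

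The main obstacle, and the reason for the slightly unusual formulation, is the non-convex geometry of the limiting state space $\mathcal{Q}^0$: because bending rotations are constrained to $\SO(3)$ and $\partial_1 v = R e_1$, limit points of $(y^h(t), \grad[h] y^h(t))$ need not be unique in $t$ and the deformation variable does not admit a single-subsequence Helly selection. The resolution is to select uniformly only the dissipative variable $z^h$ (where the $\mathrm{BV}$-structure coming from $\operatorname{Diss}$ is available) and to allow the deformation subsequence to depend on $t$; this is exactly what \eqref{Eq:ConvergenceOfApproxSolutionsDeformation} encodes. A minor technical point is the presence of the shift $t^h \to t$ in \eqref{Eq:ApproxStability}, which is easily handled using the assumed regularity of $l$ in time and the a priori bound on $y^h(t)$.
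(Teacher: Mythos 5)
Your overall scheme (a priori bounds via Gronwall, Helly selection for $z^h$ only, $t$-dependent subsequences for the deformation, stability via the mutual recovery sequence, energy balance by combining the approximate balance with the standard stability-based upper estimate) is exactly the paper's strategy, and those parts are fine. However, the last stage contains two genuine gaps. First, your claim that $\partial_t\mathcal{E}^h(t,q^h(t))=-\int_\Omega \partial_t l(t)\cdot y^h(t)$ converges a.e.\ and in $\Leb^1([0,T])$ ``by dominated convergence and the a.e.\ convergence $y^h(t)\to v(t)$ up to a $t$-dependent subsequence'' is a non sequitur: pointwise convergence along subsequences that change with $t$ does not give a.e.\ in $t$ convergence of the fixed ($t$-independent) sequence, so dominated convergence cannot be invoked. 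The paper resolves this by introducing $P^h(t):=\langle\partial_t l(t),y^h(t)\rangle$, extracting a weak $\Leb^1$-limit $P^*$ along the single subsequence, defining $(v(t),R(t))$ along a $t$-dependent subsequence that \emph{attains} $\liminf_h P^h(t)$, and then, after the energy-balance chain forces $P^*(s)=\langle\partial_s l^{\rm eff}(s),v(s)\rangle$ a.e., upgrading weak convergence to a.e.\ convergence of the whole sequence via \cref{Lem:PointwiseConvergenceFromWeakConvergence}; this choice of the liminf-attaining subsequence is also what makes the inequality $-\int_0^t P^*\le -\int_0^t\langle\partial_s l^{\rm eff}(s),v(s)\rangle\,\dd s$ in the chain legitimate, a point your sketch skips.

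Second, ``matching upper and lower limits forces $\mathcal{E}^h(t,q^h(t))\to\mathcal{E}^0(t,q(t))$'' hides the step $\mathcal{E}^0(t,q(t))\le\liminf_h\mathcal{E}^h(t,q^h(t))$ for the full selected sequence. The lower bound \cref{Thm:EvolutionaryGammaConvergence}(b) only applies along subsequences whose deformations converge, and different $t$-dependent subsequences may produce different rod configurations paired with the same $z(t)$; the liminf along your chosen subsequence may exceed the liminf of the full sequence. The paper closes this by taking a second $t$-dependent subsequence along which the energies attain their liminf, with limit $(v^*,R^*)$, and then using the already established stability of $q(t)$ (which makes $(v(t),R(t))$ a minimizer of $(v,R)\mapsto\mathcal{E}^0(t,v,R,z(t))$, since $\mathcal{D}^0(z(t),z(t))=0$) to get $\mathcal{E}^0(t,q(t))\le\mathcal{E}^0(t,v^*,R^*,z(t))\le E^-(t)$. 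Without these two arguments the convergences in \eqref{Eq:ConvergenceOfApproxSolutions} for the energy and the power term are not established, so the proposal as written is incomplete at precisely the point where the non-convexity of $\mathcal{Q}^0$ bites.
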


\begin{proof}
\DeclarePairedDelimiterX{\sprod}[2]{\langle}{\rangle}{#1\,,\,\mathopen{} #2}
\stepemph{Step 1 -- Energy bound:} 
We denote by $\sprod*{\placeholder}{\placeholder}_Q$ the euclidean scalar product in $\Leb^2(\Omega, \R^3)$. From the compactness \cref{Thm:EvolutionaryGammaConvergence} (a) and \eqref{Eq:ApproxEnergyBalance}, we obtain
\begin{align*}
	&\norm*{y^h(t)}_{\SobH^1(\Omega)}^2 + \norm*{\grad[h]y^h(t)}_{\Leb^2(\Omega)}^2 + \norm*{z^h(t)}_{\Leb^2(\Omega)}^2 + \mathcal{E}^h_{\el+\pl}(y^h(t), z^h(t)) + \operatorname{Diss}_{\mathcal{M}^h}(z^h;[0,t]) \\
	&\qquad\overset{\mathllap{\eqref{Eq:Coercivity}}}{\leq} c_1 \left( 1 + \mathcal{E}^h_{\el+\pl}(y^h(t), z^h(t)) + \operatorname{Diss}_{\mathcal{M}^h}(z^h;[0,t]) \right) \\
	&\qquad= c_1 \left( 1 + \mathcal{E}^h(t, y^h(t), z^h(t)) + \operatorname{Diss}_{\mathcal{M}^h}(z^h;[0,t]) + \sprod*{l(t)}{y^h(t)}_Q \right) \\
	&\qquad\overset{\mathllap{\eqref{Eq:ApproxEnergyBalance}}}{\leq} c_1 \left( 1 + \kappa(h) + \mathcal{E}^h(0, q^h_0) - \int_0^t \sprod*{\partial_s l(s)}{y^h(s)}_Q \dd s + \sprod*{l(t)}{y^h(t)}_Q \right) \\
	&\qquad\leq c_2 \left( 1 + \int_0^t \norm*{y^h(s)}_{\SobH^1(\Omega)} \,\dd s + \norm*{y^h(t)}_{\SobH^1(\Omega)} \right).
\end{align*}
By absorbing the latter term into the left-hand side and using Gronwall's lemma, we obtain for some constant $c_3 > 0$ independent of $h$ and $t$,
\begin{equation*}\tag{$*$}
	\norm*{y^h(t)}_{\SobH^1(\Omega)}^2 + \norm*{\grad[h]y^h(t)}_{\Leb^2(\Omega)}^2 + \norm*{z^h(t)}_{\Leb^2(\Omega)}^2 + \mathcal{E}^h_{\el+\pl}(y^h(t), z^h(t)) + \operatorname{Diss}_{\mathcal{M}^h}(z^h;[0,t]) \leq c_3.
\end{equation*}

\stepemph{Step 2 -- Compactness:}
Using $(*)$ and $l \in \SobW^{1,1}([0,T], \SobH^1(\Omega, \R^3))$, we observe that the sequence $(\partial_t\mathcal{E}^h(\placeholder, q^h(\placeholder)))_h$ is bounded in $\Leb^1([0,T])$. Thus, using $(*)$ once more to apply a generalized version of Helly's selection principle, cf.\ \cite[Thm.~A.1]{MRS08}, we obtain a subsequence (independent of $t$, not relabeled), such that for all $t \in [0,T]$
\begin{align*}
	z^h(t) &\wto z(t) \text{ in } \Leb^2(\Omega, \R^{3\times 3}), &
	\operatorname{Diss}_{\mathcal{M}^h}(z^h;[0,t]) &\to: \delta^0(t), \\
	P^h := \sprod[\big]{\partial_t l(\placeholder)}{y^h(\placeholder)}_\Omega &\wto: P^* \text{ in } \Leb^1([0,T]),
\end{align*}
and $\delta^0:[0,T] \to \R$ satisfies
\begin{equation*}
	\operatorname{Diss}_{\mathcal{M}^0}(z;[s,t]) \leq \delta^0(t) - \delta^0(s) \quad \text{for all } 0 \leq s < t \leq T.
\end{equation*}
Let us fix $t \in [0,T]$. We define $E^-(t) := \liminf_{h \to 0} \mathcal{E}^h(t, q^h(t))$, $E^+(t) := \limsup_{h \to 0} \mathcal{E}^h(t, q^h(t))$ and $P^0(t) := \liminf_{h \to 0} P^h(t)$. Note that by Fatou, we have 
\begin{equation}\label{Eq:ProofConvergenceSolution:Fatou}
	P^*(s) \geq P^0(s) \text{ for a.e.\ } s \in [0,T].
\end{equation}
We restrict to a further $t$-dependent subsequence, indicated by $h'_t$, such that $P^{h'_t}(t) \to P^0(t)$.
Let us anticipate that in Step 5 we shall show that the whole (sub-)sequence $\left(\sprod*{\partial_s l(s)}{y^h(s)}_\Omega\right)_h$ is convergent for a.e.\ $s \in [0,T]$. Thus, the restriction to this subsequence is not necessary, which is important as discussed in \cref{Rem:ConvergenceSubsequences} below. In view of the compactness \cref{Thm:EvolutionaryGammaConvergence} (a), we find $(v(t),R(t)) \in \mathcal{A}_{\rm rod}$ and a further $t$-dependent subsequence (not relabeled), such that
\begin{equation}\label{Eq:ProofConvergenceSolution:ConvDeformation}
	y^{h'_t}(t) \to v(t) \text{ in } \Leb^2(\Omega, \R^3), \qquad
	\grad[h'_t] y^{h'_t}(t) \to R(t) \text{ in } \Leb^2(\Omega, \R^{3\times 3}),
\end{equation}
and $q(t) = (v(t),R(t),z(t)) \in \mathcal{Q}^0_{(v_\mathrm{bc}, R_\mathrm{bc})}$. We observe $q(0) = q^0_0$ and $P^0(s) = \sprod[\big]{\partial_s l(s)}{v(s)}_\Omega = \sprod*{\partial_s l^{\rm eff}(s)}{v(s)}_\omega$.
\medskip

\stepemph{Step 3 -- Stability:}
We show that the limit $q = (v,R,z)$ is stable, i.e.\ satisfies $q(t) \in \mathcal{S}_{\mathcal{M}^0}(t)$ for any $t \in [0,T]$. We prove this with the help of the mutual recovery sequence \cref{Thm:EvolutionaryGammaConvergence} (c). Indeed, fix $t \in [0,T]$ and let $(\hat{v},\hat{R},\hat{z}) \in \mathcal{Q}^0_{(v_\mathrm{bc}, R_\mathrm{bc})}$. By \cref{Thm:EvolutionaryGammaConvergence} (c), there exists a subsequence $(h_j)$ of $(h'_t)$, such that \eqref{Eq:ProofConvergenceSolution:ConvDeformation} holds and for which we can construct a mutual recovery sequence $\hat{q}^{h_j} = (\hat{y}^{h_j}, \hat{z}^{h_j})$ associated to $q(t)$, $\hat{q}$ and $q^{h_j}(t)$. We denote by $(t^h) \subset [0,T]$ a sequence with $t^h \to t$ satisfying \eqref{Eq:ApproxStability}. Note that from the convergences $y^{h_j}(t) \to v$ and $\hat{y}^{h_j} \to \hat{v}$ in $\SobH^1(\Omega, \R^3)$, we may infer
\begin{equation*}
	\sprod*{l(t^{h_j})}{y^{h_j}(t)}_\Omega \to \sprod[\big]{l(t)}{v(t)}_\Omega = \sprod*{l^{\rm eff}(t)}{v(t)}_\omega, \qquad
	\sprod*{l(t^{h_j})}{\hat{y}^{h_j}}_\Omega \to \sprod*{l^{\rm eff}(t)}{\hat{v}(t)}_\omega.
\end{equation*}
Thus, we obtain
\begin{multline*}
	\mathcal{E}^0(t, \hat{q}) + \mathcal{D}^0(z(t), \hat{z}) - \mathcal{E}^0(t, q(t)) \\
	\overset{\eqref{Eq:MututalLimsupInequality}}{\geq} \limsup_{j \to 0} \left( \mathcal{E}^{h_j}(t^{h_j}, \hat{q}^{h_j}) + \mathcal{D}^{h_j}(z^{h_j}(t), \hat{z}^{h_j}) - \mathcal{E}^{h_j}(t^{h_j}, q^{h_j}(t)) \right)
	\overset{\eqref{Eq:ApproxStability}}{\geq} \limsup_{j \to 0} (-\kappa(h_j)) = 0.
\end{multline*}
We claim that the stability implies $\mathcal{E}^0(t,q(t)) \leq E^-(t)$. Indeed, by appealing to a different subsequence of $(h)$, indicated by $h^\#_t$, we find $(v^*,R^*)$ with $(v^*,R^*, z(t)) \in \mathcal{Q}^0_{(v_\mathrm{bc}, R_\mathrm{bc})}$ satisfying
\begin{align*}
	y^{h^\#_t}(t) &\to v^* \text{ in } \Leb^2(\Omega, \R^3), &
	&\grad[h^\#_t] y^{h^\#_t}(t) \to R^* \text{ in } \Leb^2(\Omega, \R^{3\times 3}), \text{ and} &
	\mathcal{E}^{h^\#_t}(t, q^{h^\#_t}(t)) &\to E^-(t).
\end{align*}
The stability of $q(t)$ especially implies that $(v(t), R(t))$ minimizes $(v^*,R^*) \mapsto \mathcal{E}^0(t,v^*,R^*,z(t))$. Thus, the lower bound statement \eqref{Eq:LiminfInequalities} shows
\begin{equation} \label{Eq:ProofConvergenceSolution:LiminfEnergy}
	\mathcal{E}^0(t,q(t)) \leq \mathcal{E}^0(t,v^*,R^*, z(t)) \leq \liminf_{h^\#_t \to 0} \mathcal{E}^h(t, q^{h^\#_t}(t)) = E^-(t).
\end{equation}

\stepemph{Step 4 -- Energy balance:}
From $P^0(t) = \liminf_{h \to 0} P^h(t)$ we infer that $P^0$ is measurable and combined with $\abs*{P^h(t)} \leq c_4\norm*{\partial_1 l(t)}_{\Leb^2(\Omega)}$, we deduce $P^0 \in \Leb^1([0,T])$. It remains to show that $q$ satisfies the energy balance \eqref{Eq:GlobalEnergyBalance}. We split this statement by showing the lower and upper bound separately, starting with the lower bound. Indeed,
\begin{align*}
	&\mathcal{E}^0(t, q(t)) + \operatorname{Diss}_{\mathcal{M}^0}(z;[0,t]) 
	\leq E^-(t) + \delta^0(t) \leq E^+(t) + \delta^0(t) \\
	&\qquad= \limsup_{h \to 0} \left(\mathcal{E}^h(t, q^h(t)) + \operatorname{Diss}_{\mathcal{M}^h}(z^h;[0,t])\right) \\
	&\qquad\overset{\mathllap{\eqref{Eq:ApproxEnergyBalance}}}{\leq} \lim_{h \to 0} \left( \mathcal{E}^h(0, q^h_0) - \int_0^t \sprod*{\partial_s l(s)}{y^h(s)}\,\dd s + \kappa(h)\right) \\
	&\qquad\overset{\mathllap{\text{Step 2}}}{=} \mathcal{E}^0(0,q^0_0) - \int_0^t P^*(s) \,\dd s
	\leq \mathcal{E}^0(0,q^0_0) - \int_0^t \sprod*{\partial_sl^\mathrm{eff}(s)}{v(s)}_\omega \,\dd s.
\end{align*}
The upper bound is a direct consequence of the stability established in Step 3. We refer to \cite[Prop.~2.4]{MRS08} and \cite[Prop.~2.1.23]{MR15} (see also the comments in the proof of \cite[Thm.~2.4.10]{MR15}) for a detailed proof.

\stepemph{Step 5 -- Improved convergence:}
We show the remaining convergences in \eqref{Eq:ConvergenceOfApproxSolutions}. From Step 4, we infer
\begin{multline*}
	\mathcal{E}^0(0,q^0_0) - \int_0^t \sprod*{\partial_sl^\mathrm{eff}(s)}{v(s)}_\omega \,\dd s
	= \mathcal{E}^0(t, q(t)) + \operatorname{Diss}_{\mathcal{M}^0}(z;[0,t])
	\leq E^-(t) + \delta^0(t) \\
	\qquad \leq E^+(t) + \delta^0(t) 
	\leq \mathcal{E}^0(0,q^0_0) - \int_0^t P^*(s) \,\dd s
	\leq \mathcal{E}^0(0,q^0_0) - \int_0^t \sprod*{\partial_sl^\mathrm{eff}(s)}{v(s)}_\omega \,\dd s.
\end{multline*}
Thus, the inequalities are in fact equalities. Recall that for all $t \in [0,T]$ and a.e.\ $s \in [0,T]$, $\mathcal{E}^0(t,q(t)) \leq E^-(t) \leq E^+(t)$, $\operatorname{Diss}_{\mathcal{M}^0}(z;[0,t]) \leq \delta^0(t)$ and $P^*(s) \geq \sprod*{\partial_sl^\mathrm{eff}(s)}{v(s)}_\omega$. Thus, we obtain,
\begin{equation*}
	\mathcal{E}^0(t, q(t)) = E^-(t) = E^+(t), \qquad
	\operatorname{Diss}_{\mathcal{M}^0}(z;[0,t]) = \delta^0(t) \quad\text{and}\quad
	\sprod*{\partial_sl^\mathrm{eff}(s)}{v(s)}_\omega = P^*(s).
\end{equation*}
The last equation shows that the weak limit of $\left(\sprod*{\partial_t l(\placeholder)}{y^h(\placeholder)}_\Omega\right)_h$ coincides with the point-wise $\liminf$. By \cref{Lem:PointwiseConvergenceFromWeakConvergence} we conclude a.e.\ convergence of $\partial_t \mathcal{E}^h(\placeholder, q^h(\placeholder)) = -\sprod*{\partial_t l(\placeholder)}{y^h(\placeholder)}_\Omega$. Moreover, since $\partial_t\mathcal{E}^h(\placeholder, q^h(\placeholder))$ admits a dominating map in $\Leb^1([0,T])$, we also obtain strong convergence in $\Leb^1([0,T])$ and conclude \eqref{Eq:ConvergenceOfApproxSolutions}.
\end{proof}

\begin{remark}~ \label{Rem:ConvergenceSubsequences}
\begin{enumerate}[(i)]
	\item Since, provided \eqref{Eq:InitialData}, approximate solutions always exist, one consequence of \cref{Prop:ConvergenceOfSolutions} is the existence of solutions to the limiting ERIS.
	
	\item If there exists a unique solution $(v,R,z)$ to the limiting ERIS, then the convergence statements \eqref{Eq:ConvergenceOfApproxSolutions} and \eqref{Eq:ConvergenceOfApproxSolutionsDeformation} hold for the entire sequence, since for any subsequence there exists a further subsequence such that \eqref{Eq:ConvergenceOfApproxSolutions} and \eqref{Eq:ConvergenceOfApproxSolutionsDeformation} hold and the limits are independent of the chosen subsequence.
	
	\item That \eqref{Eq:ConvergenceOfApproxSolutionsDeformation} holds up to subsequence is especially due to the fact that a posteriori it is not necessary to restrict to a subsequence, where $\liminf_{h \to 0} P^h(t)$ is attained as a limit as discussed in Step 2 of the proof.
	
	\item If solutions to the limiting ERIS are not necessarily unique but at least for a given $z \in \Leb^2(\Omega, \R^{3\times 3})$, there exists at most one stable state $(v,R,z)$ for the limiting model, then \eqref{Eq:ConvergenceOfApproxSolutionsDeformation} holds without extracting a further $t$-dependent subsequence, since the limit $(v(t), R(t))$ is uniquely defined from $z(t)$. A sufficient condition for uniqueness of stable states is that $(v,R) \mapsto \mathcal{E}^0(t,v,R,z)$ admits a unique minimizer for any $t \in [0,T]$ and $z \in \Leb^2(\Omega, \R^{3\times 3})$, cf.\ \cite[Thm.~2.4.13]{MR15}.	
\end{enumerate}
\end{remark}

\section{Discussion of the rod model}
\label{Sec:Example}

The limiting ERIS $(\mathcal{Q}^0_{(v_\mathrm{bc}, R_\mathrm{bc})}, \mathcal{E}^0, \mathcal{D}^0)$ is non-convex. 
In particular, the energy functional $\mathcal{E}^0$ and the state space $\mathcal{Q}^0_{(v_\mathrm{bc}, R_\mathrm{bc})}$ are non-convex. 
The existence of an energetic solutions follows from our convergence result Theorem~\ref{Thm:MainResultConvergenceOfApproximateSolutions} but it can also be deduced directly from the standard construction via time-incremental problems.
In view of the non-convexity, we cannot expect uniqueness and a priori temporal regularity of the non-dissipative component $(v,R)$ fails, as solutions may jump arbitrarily between different stable configurations and may thus even not be measurable. 
As a consequence the passage to a $t$-dependent subsequence for convergence of the non-dissipative component $y^h$ cannot be avoided in general. 
\medskip

In this section we discuss an example where a unique solution to the limiting problem is expected. 
As discussed in \cref{Rem:ConvergenceSubsequences}, in this case we observe convergence of the whole sequence of (approximate) solutions to the 3D problem. 
Furthermore, in this example we obtain temporal regularity of the solution.
For the example we consider a highly symmetric situation, where neither the rod's cross-section nor the elastic and plastic material law prefers a bending direction.
In particular, we assume that the rod's cross-section is circular,
\begin{equation}
	\Omega = \omega \times S, \qquad
	\omega = (0,1), \qquad
	S = B_{\pi^{-1/2}}(0),
\end{equation}
and that the elastic material law is isotropic with vanishing Poisson ratio,
\begin{equation}
	Q_\el(G) := \mu \abs*{\sym G}^2,
\end{equation}
for some $\mu > 0$. Moreover, for some $\rho, \delta > 0$, we assume that the linearized plastic material law satisfies
\begin{equation}
	Q_\pl(G) := \rho \abs*{G}^2, \qquad
	\widetilde{\mathscr{R}}_\pl(\dot{F}) := \delta \abs*{\dot{F}}.
\end{equation}
We consider the situation of a one-sided horizontally clamped rod, i.e., boundary and initial conditions 
\begin{alignat}{3}\label{example:ass:bc}
	v(t,0) &= v_{\rm bc}=0,	&\quad R(t,0) 		&= R_{\rm bc}=I &&\quad\text{ for all }t\in[0,T],\\
	v(0,x_1) &= x_1, 				&\quad R(0,x_1) 	&= I 						&&\quad\text{ for all }x_1\in\omega,
\end{alignat}
and assume that the rod is initially not plastically strained, i.e.,
\begin{equation*}
	 z(0,x) = 0\text{ for a.e. }x \in \Omega.
\end{equation*}
The loading term is given by a force in direction $-e_2$ as
\begin{equation}
	l^{\rm eff}(t, x_1) := -\beta(t) f(x_1) e_2,
\end{equation}
where $\beta \in \Cont^\infty(\R_+)$ is increasing with $\beta(0) = 0$ and $f \in \Cont^\infty(\overline{\omega})$ with $f > 0$ in $\omega$.
\medskip

Because of the symmetry of the situation, it is natural to expect that the rod remains in the $(x_1,x_2$)-plane, i.e.,
\begin{equation}\label{example:ass:planar}
	v\cdot e_3=0\qquad\text{a.e. in }[0,T]\times\omega.
\end{equation}
In the following we shall assume \eqref{example:ass:planar} to simplify the upcoming discussion.
We note that a rod configuration $(v,R) \in \mathcal{A}_{\mathrm{rod}}$ satisfying \eqref{example:ass:planar} and the boundary condition \eqref{example:ass:bc} admits a unique representation 
\begin{equation} \label{Eq:Planar_rod}
	v(x_1) = \begin{pmatrix} \int_0^{x_1} \cos\alpha(s) \,\dd s \\ \int_0^{x_1} \sin\alpha(s) \,\dd s \\ 0\end{pmatrix}, \qquad
	R(x_1) = \begin{pmatrix} 
		\cos\alpha(x_1) & -\sin\alpha(x_1) & 0 \\
		\sin\alpha(x_1) & \cos\alpha(x_1)  & 0 \\
		0					 				& 0					 				 & 1
	\end{pmatrix},
\end{equation}
for a unique angle $\alpha \in \SobH^1(\omega)$ with $\alpha(0)=0$. As we shall explain below, this allows us to equivalently express the ERIS in terms of a state variable $q$ consisting of the angle $\alpha$ and the plastic strain $z$.

\medskip

We shall prove the following observations:
\begin{enumerate}[(a)]
	\item (Existence of an elastic regime). Consider an energetic solution $q$ with plastic component $z$. Denote by $t^*\geq 0$ the first point in time when plastic deformation occurs, that is,
	\begin{equation} \label{Eq:ExampleTstar}
		t^*:=\sup\set*{\tau\geq 0 \given z=0\text{ a.e.\ in }[0,\tau]\times\Omega}.
	\end{equation}
	We show that $t^*$ only depends on $(\mathcal{Q}^0_{(v_\mathrm{bc}, R_\mathrm{bc})}, \mathcal{E}^0, \mathcal{D}^0)$ and satisfies $0<t^*<\infty$, cf.\ \cref{Lem:Example:ElasticRegime}. In particular, $t^*$ is independent of the specific energetic solution in the definition of $t^*$.
	
	\item (Uniqueness in the elastic regime). In the elastic regime $[0,t^*]$ the energetic solutions are unique and the angle satisfies 
	\begin{equation*}
		\alpha(t)\in[-\tfrac\pi2,0]\qquad\text{for all }t\in[0,t_*].
	\end{equation*}
	In fact, we show that $\alpha$ solves an ODE, see \eqref{Eq:ExampleELG} below. See \cref{Lem:Example:UniqueSolutionElastic} for details.
	
	\item (Conditional uniqueness). Suppose $q$ is a solution to the $(\mathcal{Q}^0_{(v_\mathrm{bc}, R_\mathrm{bc})}, \mathcal{E}^0, \mathcal{D}^0)$ with an angle satisfying
	\begin{equation}\label{example:anglecondition}
		\alpha(t) \in [-\tfrac\pi2,0] \qquad\text{for a.e.\ } t \in [0,T].
	\end{equation}
	Then, $q \in W^{1,\infty}([0,T], \mathcal{Q}^0)$ and any other energetic solution $\tilde q$ whose angle satisfies \eqref{example:anglecondition} is equal to $q$.
\end{enumerate}
As we shall argue below, it is natural to expect that energetic solutions for the specific setting under consideration satisfy \eqref{example:anglecondition}. However, we do not prove this property (except in the elastic regime).

\paragraph{Equations for the angle.}
The argument for these claims is split into several lemmas.  First, we rewrite the limiting ERIS $(\mathcal{Q}^0_{(v_\mathrm{bc}, R_\mathrm{bc})}, \mathcal{E}^0, \mathcal{D}^0)$ as an ERIS ($\mathcal{Q}, \mathcal{E}, \mathcal{D}$) with state variable $q=(\alpha, z)$. To this end we use \eqref{example:ass:planar} and various formulas for $Q^{\rm eff}_{\rm el}$, $K^{\rm eff}$ and $z^{\rm res}$ that hold in the specific setting under consideration and that have been obtained in \cite{BGKN22-PP}. We note that for rod configurations $(v,R)$ satisfying \eqref{example:ass:planar} the associated bending-torsion strain $K_R = R^T \partial_1 R$ takes the form
\begin{equation}
	K_R = -\alpha' \sqrt{2}K_2,
\end{equation}
where $\alpha'$ denotes the (weak) derivative of $\alpha$ with respect to $x_1$. 
Based on this, we can write the energy $\mathcal{E}^0$ in terms of $\alpha$. The boundary conditions reduce to $\alpha(t,0) = 0$ for all $t \in [0,T]$ and the initial conditions to $\alpha(0,x_1) = 0$, $z(0,x) = 0$ for a.e.\ $x \in \Omega$. Let us first calculate the effective quantities using \cref{Prop:EffectiveQuantities} and \cite[Lem.~2.11]{BGKN22-PP}. Since $\mathcal{D}^0(z,\hat{z}) = \infty$ if $\hat{z} - z \nin \Leb^2(\Omega,\R^{3\times 3}_{\rm dev})$, it suffices to consider $z \in \Leb^2(\Omega,\R^{3\times 3}_{\rm dev})$. We obtain 
\begin{align}
	Q_\el^{\rm eff}(K) &= \frac{\mu}{8\pi} \sum_{i=1}^{3} \abs*{K \cdot K_i}^2, \\
	K^{\rm eff}(z) &= \begin{aligned}[t]
		\left[ 2\pi {\textstyle\fint_S} x_3 z_{12} + x_2z_{13} \,\dd \bar{x}\right] \sqrt{2}K_1 +
		\left[ 4\pi {\textstyle\fint_S} x_2 z_{11} \,\dd \bar{x}\right] &\sqrt{2}K_2 \\
		+ \left[ 4\pi {\textstyle\fint_S} x_3 z_{11} \,\dd \bar{x}\right] &\sqrt{2}K_3,
	\end{aligned} \\
	z^{\rm res} &= \begin{aligned}[t]
		z - \sym\compose*{-{\textstyle\fint_S} z_{11} e_1}{\bar{\grad}\varphi_z} +
		\left[ 2\pi {\textstyle\fint_S} x_3 z_{12} + x_2z_{13} \,\dd \bar{x}\right] &\sqrt{2}\Psi_1 \\
		+ \left[ 4\pi {\textstyle\fint_S} x_2 z_{11} \,\dd \bar{x}\right] \sqrt{2}\Psi_2 +
		\left[ 4\pi {\textstyle\fint_S} x_3 z_{11} \,\dd \bar{x}\right] &\sqrt{2}\Psi_3,
	\end{aligned}
\end{align}
where $K_1$, $K_2$ and $K_3$ are defined in \eqref{Eq:BasisSkewMatrices},
\begin{equation}
	\Psi_1(\bar{x}) := \frac{1}{\sqrt{2}}  \sym
		\left(\begin{smallmatrix}
			0 & 0 & 0 \\
			x_3 & 0 & 0 \\
			-x_2 & 0 & 0
		\end{smallmatrix}\right), \qquad
	\Psi_2(\bar{x}) := \frac{x_2}{\sqrt{2}}
		\left(\begin{smallmatrix}
			1 & 0 & 0 \\
			0 & 0 & 0 \\
			0 & 0 & 0
		\end{smallmatrix}\right), \qquad
	\Psi_3(\bar{x}) := \frac{x_3}{\sqrt{2}}
		\left(\begin{smallmatrix}
			1 & 0 & 0 \\
			0 & 0 & 0 \\
			0 & 0 & 0
		\end{smallmatrix}\right),
\end{equation}
and defining $\tilde{z} \in \Leb^2(\Omega, \R^{3\times 3}_{\rm sym})$ by $\tilde{z}_{11} := 0$ and $\tilde{z}_{ij} := z_{ij}$ else, $\varphi_z(x_1,\placeholder)$ minimizes
\begin{equation}
	\SobH^1(S, \R^3) \ni \varphi \mapsto \int_S \abs*{\sym\compose*{0}{\bar{\grad}\varphi} + \tilde{z}(x_1,\placeholder)}^2.
\end{equation}
Furthermore, we calculate the loading term using integrating by parts to get
\begin{equation}
	\int_0^1 l^{\rm eff}(t) \cdot v = -\beta(t) \int_0^1 F(x_1) \sin\alpha(x_1)\,\dd x_1,
\end{equation}
where $F(x_1) := \int_{x_1}^1 f$. Note that $F$ satisfies $F > 0$ on $[0,1)$ and $F(1) = 0$. Using this, it is not hard to see that any solution satisfies $z_{12} = z_{13} = 0$ and the ERIS reduces to
\begin{align}
	\mathcal{E}(t,\alpha,z)
	&:= \begin{aligned}[t]
		&\frac{\mu}{4\pi} \int_0^1 \abs*{\alpha' - \left[4\pi {\textstyle\fint_S} x_2 z_{11} \,\dd \bar{x}\right]}^2 \,\dd x_1 + \frac{\mu}{4\pi} \int_0^1 \abs*{{\textstyle\fint_S} x_3 z_{11} \,\dd \bar{x}}^2 \,\dd x_1\\
		&\qquad + \mu \int_\Omega \abs*{z_{11} + {\textstyle\fint_S} z_{11} + \left[4\pi {\textstyle\fint_S} \tilde{x}_2 z_{11} \,\dd \tilde{x}\right]x_2 + \left[4\pi {\textstyle\fint_S} \tilde{x}_3 z_{11} \,\dd \tilde{x}\right]x_3}^2 \,\dd x \\
		&\qquad + \mu \int_\Omega \sum_{i=2}^3 \abs*{z_{ii} - \partial_i(\varphi_z)_i}^2 + 2\abs*{z_{23} - \tfrac{1}{2}(\partial_3(\varphi_z)_2 + \partial_2 (\varphi_z)_3)}^2 \,\dd x  \\
		&\qquad + \rho \int_\Omega \sum_{i=1}^3 \abs*{z_{ii}}^2 + 2\abs*{z_{23}}^2\\
		&\qquad + \beta(t) \int_0^1 F\sin \alpha,
	\end{aligned} \\
	\mathcal{D}(z, \hat{z}) &:= \delta \int_\Omega \sqrt{\sum_{i=1}^3 \abs*{\hat{z}_{ii} - z_{ii}}^2 + 2\abs*{\hat{z}_{23} - z_{23}}^2}, \\
	\mathcal{Q} &:= \set*{\alpha \in \SobH^1(\omega) \given \alpha(0) = 0} \times \set*{z \in \Leb^2(\Omega, \R^{3\times 3}_{\rm dev}) \given z_{12} = z_{13} = 0} =: \mathcal{Q}_\alpha \times \mathcal{Q}_z.
\end{align}

\paragraph{Uniqueness in the elastic regime.}
The ERIS ($\mathcal{Q}, \mathcal{E}, \mathcal{D}$) describes the evolution of a horizontally clamped planar rod under an increasing downwards directed force (e.g.\ gravity). For such a problem it is reasonable to assume that a solution $q^*=(\alpha^*, z^*)$ satisfies $\alpha^*(t) \in [-\frac{\pi}{2}, 0]$ in $\omega$ as \cref{Fig:Elastic_rod} suggests. Indeed, we show that this can be rigorously justified for the associated elastic problem of minimizing $\mathcal{Q}_\alpha \ni \alpha \mapsto \mathcal{E}(t,\alpha,0)$.

\begin{figure}[htp]
\centering
\includegraphics[width=0.72\linewidth]{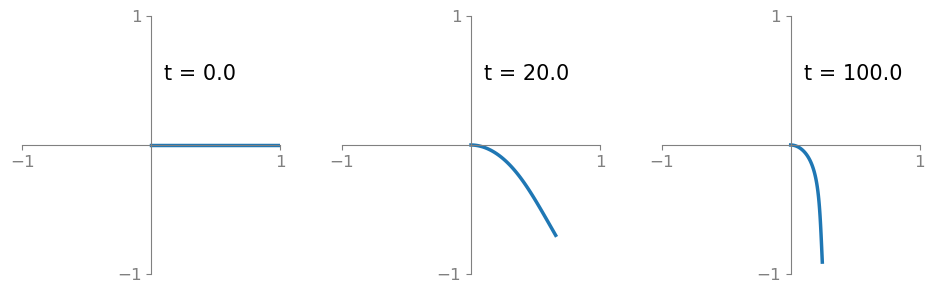}
\caption{The graphs depict the deformation $v$ associated via \eqref{Eq:Planar_rod} to the solution to \eqref{Eq:ExampleELG} subject to $\mu = 4\pi$, $\beta(t) = t$ and $F(x_1) = 1 - x_1$ (that is $f(x_1) \equiv 1$) for different points in time. The solution is calculated via a straight-forward finite difference scheme using the linearization $\cos \alpha \approx \cos \alpha_{\rm ap} - (\alpha - \alpha_{\rm ap})\sin\alpha_{\rm ap}$, where we choose the approximation $\alpha_{\rm ap}$ as the solution from the previous time step.}
\label{Fig:Elastic_rod}
\end{figure}

\begin{lemma} \label{Lem:Example:UniqueSolutionElastic}
Consider the energy functional
\begin{equation}
	\mathcal{E}(t,\alpha) := \frac{\mu}{4\pi}\int_0^1 \abs*{\alpha'}^2 + \beta(t)\int_0^1 F \sin \alpha,
\end{equation}
for $\alpha \in \SobH^1(\omega)$ subject to $\alpha(0) = 0$. There exists a unique minimizer $\alpha_t$ of $\mathcal{E}(t,\placeholder)$. It satisfies $\alpha_t \in \Cont^2(\overline{\omega})$, $\alpha_t \in [-\frac{\pi}{2},0]$ in $\omega$ and is a classical solution to the ordinary differential equation
\begin{equation} \label{Eq:ExampleELG}
		-\tfrac{\mu}{2\pi} \alpha'' + \beta(t)F\cos\alpha = 0, \qquad
		\text{subject to } \alpha(0) = 0, \alpha'(1) = 0.
\end{equation}
\end{lemma}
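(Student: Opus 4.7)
The plan is to prove the lemma in four conceptual steps, with the range constraint $\alpha_t \in [-\pi/2, 0]$ forming the main obstacle.

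First I would establish existence and regularity via the direct method. On the affine subspace $\mathcal{Y} := \{\alpha \in \SobH^1(\omega) : \alpha(0) = 0\}$, the functional $\mathcal{E}(t, \cdot)$ is coercive (Poincaré applied to the Dirichlet term, using $\alpha(0) = 0$, controls $\|\alpha\|_{\SobH^1}$; the loading term is bounded in modulus by $\beta(t)\|F\|_{\Leb^1}$) and weakly lower semicontinuous (the Dirichlet term is convex and continuous; the loading term is weakly continuous by dominated convergence, since weak $\SobH^1$ convergence yields $\Leb^2$ and hence a.e.\ subsequential convergence, while $|\sin| \le 1$). The direct method yields a minimizer $\alpha_t \in \mathcal{Y}$. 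Testing with $\varphi \in \SobH^1(\omega)$ satisfying $\varphi(0) = 0$ produces the weak Euler--Lagrange equation and, by allowing $\varphi(1) \neq 0$, the natural boundary condition $\alpha_t'(1) = 0$. A one-dimensional elliptic bootstrap (using $F \cos\alpha_t \in \Leb^\infty(\omega)$) then gives $\alpha_t \in \Cont^2(\overline\omega)$, so $\alpha_t$ solves the stated ODE classically.

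Next I would construct a distinguished minimizer on the convex set $K := \{\alpha \in \mathcal{Y} : -\pi/2 \le \alpha \le 0 \text{ a.e.}\}$. On $K$, the map $\alpha \mapsto \sin\alpha$ is convex, since its second derivative $-\sin\alpha$ is non-negative on $[-\pi/2, 0]$, and $\beta(t) F \ge 0$ makes the loading term convex on $K$. Together with the strictly convex Dirichlet term, $\mathcal{E}(t, \cdot)|_K$ is strictly convex and admits a unique minimizer $\alpha_K \in K$. A standard obstacle-problem argument verifies that neither obstacle is active in the interior: at a point where $\alpha_K = 0$, the ODE forces $\alpha_K'' = \tfrac{2\pi\beta(t)}{\mu} F > 0$, precluding an upper-obstacle contact, and a corresponding ODE analysis excludes sustained contact with the lower obstacle. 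Hence $\alpha_K$ is a classical $\Cont^2$ solution of the BVP lying strictly in $(-\pi/2, 0)$ on $\omega$.

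The main obstacle is then to show that every global minimizer $\alpha_t \in \mathcal{Y}$ belongs to $K$, from which uniqueness will follow from the strict convexity of $\mathcal{E}(t, \cdot)|_K$. Consider the truncation $\tilde\alpha := \max(-\pi/2, \min(0, \alpha_t)) \in K$: it always lowers the Dirichlet term, and on $\{\alpha_t < -\pi/2\}$ it lowers the loading term as well, since $\sin\tilde\alpha = -1 \le \sin\alpha_t$. The delicate case is $\{\alpha_t > 0\}$, where $\alpha_t$ may exceed $\pi$. I would handle this in two stages: on any connected component $(a,b)$ of $\{0 < \alpha_t < \pi\}$ on which $\alpha_t$ vanishes at the endpoints, the reflection $\alpha_t \mapsto -\alpha_t$ on $(a,b)$ preserves the Dirichlet term but strictly lowers the loading term by $2\beta(t)\int_a^b F \sin\alpha_t > 0$, contradicting minimality. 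To exclude values $\alpha_t > \pi$, I would combine the Cauchy--Schwarz bending bound $|\alpha_t(x)|^2 \le x \|\alpha_t'\|_{\Leb^2}^2$ (using $\alpha_t(0) = 0$) with the energy comparison $\mathcal{E}(t, \alpha_t) \le \mathcal{E}(t, \alpha_K)$ and a maximum-principle analysis of the ODE (at an interior maximum of $\alpha_t$, $\cos\alpha_t \le 0$, so $\alpha_t$ must locally lie in $[\pi/2, 3\pi/2]$, producing an incompressible bending cost). A symmetric argument rules out $\alpha_t < -\pi/2$. This combinatorial analysis of the level sets of $\alpha_t$ is the step I expect to be the most technically demanding.
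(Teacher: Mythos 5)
Your Step 1 (existence by the direct method, the weak Euler--Lagrange equation, the natural boundary condition $\alpha_t'(1)=0$, and $\Cont^2$ regularity by bootstrap) is sound, and so is the strict convexity of $\mathcal{E}(t,\placeholder)$ on $K$; if you could show that \emph{every} global minimizer lies in $K$, uniqueness would indeed follow, and the obstacle-type analysis of $\alpha_K$ in Step 2 would even become unnecessary (note also that at a contact point the constrained minimizer only satisfies a variational inequality, so invoking the ODE there is imprecise). The genuine gap is exactly the step you flag as delicate: confining an unconstrained minimizer to $[-\frac{\pi}{2},0]$ on the set $\set{\alpha_t>0}$. Your clamp $\tilde\alpha=\max(-\frac{\pi}{2},\min(0,\alpha_t))$ does not decrease the loading term where $\sin\alpha_t<0=\sin 0$ (e.g.\ $\alpha_t\in(\pi,2\pi)$); your reflection argument only covers components of $\set{0<\alpha_t<\pi}$ with boundary value $0$, so components adjacent to $\set{\alpha_t\ge\pi}$ are not reached; and the proposed exclusion of values above $\pi$ via ``Cauchy--Schwarz $+$ energy comparison with $\alpha_K$ $+$ maximum principle'' is not yet an argument: both $\mathcal{E}(t,\alpha_t)$ and $\mathcal{E}(t,\alpha_K)$ carry a loading contribution of order $\beta(t)$, which may be arbitrarily large, so the comparison with $\alpha_K$ gives no uniform control of excursions, and the ``incompressible bending cost'' is precisely the quantitative estimate that is missing. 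As written, the central claim ``every global minimizer lies in $K$'' is unproved.

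The paper's proof removes this level-set analysis with one pointwise device: the reflect-and-clamp map $g(\alpha):=-\abs{\alpha}$ for $\abs{\alpha}\le\frac{\pi}{2}$ and $g(\alpha):=-\frac{\pi}{2}$ otherwise. Since $g$ is $1$-Lipschitz and $\sin(g(\alpha))\le\sin\alpha$ for \emph{every} $\alpha\in\R$ (not only for $\alpha\le 0$), one gets $\mathcal{E}(t,g\circ\alpha)\le\mathcal{E}(t,\alpha)$, with strict inequality as soon as $\alpha$ leaves $[-\frac{\pi}{2},0]$ (using $F>0$ on $[0,1)$ and continuity of $\alpha$); in particular arbitrarily large positive excursions are handled at once, with no case distinction at $\pi$. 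Combining this with the convexified functional in which $\sin$ is replaced by the convex function $\widetilde{\sin}$ from \eqref{Eq:ExampleSintilde} --- strictly convex total energy, hence a unique minimizer, which by the $g$-inequality takes values in $[-\frac{\pi}{2},0]$, where the two functionals coincide --- yields existence, uniqueness, the range constraint and the Euler--Lagrange characterization simultaneously. If you wish to keep your route, replacing your clamp by this reflect-and-clamp comparison map is the missing ingredient that closes your Step 3; your remaining steps then go through essentially unchanged.
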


\begin{proof}
As long as $\beta(t) = 0$, we have the unique solution $\alpha_t \equiv 0$. Thus, we may assume that $\beta(t) > 0$. Consider the map $g:\R\to [-\frac{\pi}{2}, 0]$ given by
\begin{equation*}
	g(\alpha) := \begin{cases}
		-\tfrac{\pi}{2} & \text{if }\abs*{\alpha} \geq \tfrac{\pi}{2}, \\
		-\abs*{\alpha} & \text{else}.
	\end{cases}
\end{equation*}
Using that $F > 0$ on $[0,1)$, it is not hard to show that for any $\alpha \in \SobH^1(\omega)$ with $\alpha(0) = 0$, we have $\mathcal{E}(t,g \circ \alpha) \leq \mathcal{E}(t,\alpha)$ and $\mathcal{E}(t,g \circ \alpha) < \mathcal{E}(t,\alpha)$ if $\alpha(x_1) \nin [-\frac{\pi}{2},0]$ for some $x_1 \in \omega$. Moreover, define $\widetilde{\sin}: \R \to \R$ by
\begin{equation} \label{Eq:ExampleSintilde}
	\widetilde{\sin}(\alpha) := \begin{cases}
		-1 & \text{if } \alpha \leq -\tfrac{\pi}{2}, \\
		\sin(\alpha) & \text{if } \alpha \in [-\tfrac{\pi}{2},0], \\
		\alpha & \text{else},
	\end{cases}
\end{equation}
and consider the energy functional $\widetilde{\mathcal{E}}$ defined as $\mathcal{E}$ with $\sin$ replaced by $\widetilde{\sin}$. We observe $\widetilde{\mathcal{E}}(t,\alpha) = \mathcal{E}(t,\alpha)$ if $\alpha \in \SobH^1(\omega)$ satisfies $\alpha \in [-\frac{\pi}{2},0]$ in $\omega$ and $\widetilde{\mathcal{E}}(t,g \circ\alpha) < \widetilde{\mathcal{E}}(t,\alpha)$ else. 
Since $\widetilde{\mathcal{E}}(t,\placeholder)$ is strictly convex, it admits a unique minimizer $\alpha_t$. We conclude that $\alpha_t$ is also the unique minimizer of $\mathcal{E}(t,\placeholder)$ and satisfies $\alpha_t \in [-\frac{\pi}{2},0]$ in $\omega$.
Moreover, we can characterize $\alpha_t$ as the unique solution to the Euler-Lagrange equation, which in its weak form reads
\begin{equation}
	\frac{\mu}{2\pi}\int_0^1 \alpha'\varphi' + \beta(t) \int_0^1 F \varphi\, \widetilde{\sin}' \alpha = 0, \qquad
	\text{for all } \varphi \in \SobH^1(\omega) \text{ with } \varphi(0) = 0.
\end{equation}
Testing the equation for fixed $x_1 \in (0,1]$ with $\varphi(s) := \min\set*{s,x_1}$ and using integration by parts, we obtain the integral equation
\begin{equation} \label{Eq:Example:IntegralEquation}
	\alpha(x_1) = -\frac{2\pi\beta(t)}{\mu} \int_0^{x_1} \int_s^1 F(\sigma) \widetilde{\sin}'(\alpha(\sigma)) \,\dd\sigma \dd s.
\end{equation}
But by the fundamental theorem of calculus this integral equation is equivalent to the strong form of the Euler-Lagrange equation which is given by \eqref{Eq:ExampleELG} with $\cos$ replaced by $\widetilde{\sin}'$. But this replacement of $\cos$ by $\widetilde{\sin}'$ can be reversed in view of $\alpha_t \in [-\frac{\pi}{2}, 0]$ in $\omega$.
\end{proof}

\paragraph{Conditional uniqueness.}
The ERIS ($\mathcal{Q}, \mathcal{E}, \mathcal{D}$) has a unique solution $q^* = (\alpha^*, z^*)$ under the additional assumption that it satisfies $\alpha^*(t) \in [-\frac{\pi}{2}, 0]$ in $\omega$. Indeed, this is due to the fact that $\mathcal{D}$ is convex and in this case $\mathcal{E}$ is smooth and $\lambda$-convex for some $\lambda > 0$ in the convex domain
\begin{equation}
	\widetilde{Q} := \set*{\alpha \in \mathcal{Q}_\alpha \given \alpha \in [-\tfrac{\pi}{2},0] \text{ in } \omega} \times \mathcal{Q}_z,
\end{equation}
in the sense that for all $(\alpha,z), (\hat{\alpha}, \hat{z}) \in \widetilde{Q}$ and $\theta \in [0,1]$,
\begin{equation}
\begin{aligned}
	&\mathcal{E}\of*{t, \theta\alpha + (1-\theta)\hat{\alpha}, \theta z + (1-\theta)\hat{z}} \\
	&\qquad \leq \theta \mathcal{E}(t,\alpha, z) + (1- \theta)\mathcal{E}(t, \hat{\alpha}, \hat{z})
	- \lambda\theta(1-\theta)\left(\norm*{\alpha - \hat{\alpha}}_{\SobH^1(\omega)}^2 + \norm*{z - \hat{z}}_{\Leb^2(\Omega)}^2\right).
\end{aligned}
\end{equation}
% This is due to the fact that $\sin$ is convex on $[-\frac{\pi}{2}, 0]$. 
The uniqueness is shown in \cite[Sect.~3.4]{MR15} (especially Corollary~3.4.6 and Section~3.4.4). Moreover, the reference establishes temporal regularity in the sense that $q^* \in \SobW^{1,\infty}([0,T], \mathcal{Q})$ and satisfies for a.e.\ $t \in [0,T]$ the differential inclusion
\begin{equation}\label{Eq:Example:DifferentialInclusion}
	-D_z \mathcal{E}(t,q^*(t)) \in \partial\mathcal{R}(\dot{z}^*(t)), \qquad -\operatorname{D}_\alpha\mathcal{E}(t,q^*(t)) = 0,
\end{equation}
where $\mathcal{R}(\dot{z}) := \delta \int_\Omega \abs*{\dot{z}}$ is the dissipation potential and $\partial\mathcal{R}$ denotes its convex subdifferential. The same consequences hold for the convexified energy $\tilde{\mathcal{E}}$ where $\sin\alpha$ is replaced by \eqref{Eq:ExampleSintilde}. For the energetic solution $\tilde{q}^*$ to the convexified ERIS, we obtain through the temporal regularity, that the angle stays in $[-\frac\pi2, 0]$ for a small time beyond the elastic regime, i.e.\ $\tilde{\alpha}^*(t) \in [-\frac\pi2, 0]$ for a.e.\ $t \in [0,T]$ for some $T > t^*$, provided the minimizer in the elastic regime satisfies $\alpha_{t^*} > -\frac\pi2$. It is clear that in $[0,T]$ the associated differential inclusion for the convexified problem coincides with \eqref{Eq:Example:DifferentialInclusion}. However, it is not clear whether $\tilde{q}^*$ is also an energetic solution to the original problem.

\paragraph{Existence of an elastic regime.}
We conclude this section by showing that there is a period of time where the solution stays elastic but it is not elastic for all points in time (provided $\beta(t) \to \infty$ as $t \to \infty$). For this we estimate the point in time $t^* \geq 0$ given by \eqref{Eq:ExampleTstar}. In view of \eqref{Eq:Example:DifferentialInclusion}, we have
\begin{equation*}
	t^* = \sup \set{ \tau \geq 0 \given s \in \mathcal{T} \text{ for a.e.\ } s \in [0,\tau]},
\end{equation*}
with
\begin{equation*}
	\mathcal{T} := \set*{t \geq 0 \given -\operatorname{D}_z\mathcal{E}(t,\alpha_t,0) \in \partial\mathcal{R}(0), \text{ where } \alpha_t \text{ is the unique minimizer of } \mathcal{E}(t,\placeholder,0)}.
\end{equation*}

\begin{lemma} \label{Lem:Example:ElasticRegime}
The point in time $t^*$ given by \eqref{Eq:ExampleTstar} satisfies
\begin{equation}
	\sup\set*{t \geq 0 \given \beta(t) \leq \tfrac{\delta \sqrt{3}}{\norm*{F}_\infty \sqrt{32\pi}}} \leq t^* \leq \inf\set*{t \geq 0 \given \beta(t) \geq \tfrac{9\mu}{8(\pi - 3) F(\frac{1}{2})}\max\set*{\pi^2, \tfrac{3\pi\delta^2}{16\mu^2}}}.
\end{equation}
\end{lemma}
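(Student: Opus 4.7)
In view of the characterization $t^* = \sup\set*{\tau \geq 0 \given s \in \mathcal{T}\text{ for a.e.\ }s \in [0,\tau]}$ stated just before the lemma, where $\mathcal{T} = \set*{t \given -\operatorname{D}_z\mathcal{E}(t,\alpha_t,0) \in \partial\mathcal{R}(0)}$, the problem reduces to characterizing membership in $\mathcal{T}$ purely in terms of $\beta(t)$. I will first show that this condition is equivalent to an $L^\infty$-bound on $\alpha_t'$, and then couple this with upper and lower bounds on $\norm*{\alpha_t'}_{L^\infty}$ derived from the ODE \eqref{Eq:ExampleELG}. Among the summands of $\mathcal{E}$, all plastic terms beyond the first are quadratic in $z$ with vanishing linearization at $z=0$, and the loading term is independent of $z$; only the first term contributes, yielding
\begin{equation*}
	\langle\operatorname{D}_z \mathcal{E}(t, \alpha_t, 0), \varphi\rangle = -2\mu \int_\Omega x_2\, \alpha_t'(x_1)\, \varphi_{11}(x) \,\dd x \qquad \text{for } \varphi \in \mathcal{Q}_z.
\end{equation*}
A short linear-algebra computation identifies the representative of this functional in $\mathcal{Q}_z$ (using the constraints $\varphi_{12}=\varphi_{13}=0$, $\operatorname{tr}\varphi=0$) as $\xi_*(x) = -2\mu\, x_2\, \alpha_t'(x_1)(e_1 \otimes e_1 - \tfrac{1}{3} I)$, whose Frobenius norm equals $\tfrac{2\sqrt 2\mu}{\sqrt 3}|x_2 \alpha_t'(x_1)|$. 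Since $\partial\mathcal{R}(0) = \set*{\xi \in \mathcal{Q}_z \cap L^\infty \given |\xi| \leq \delta \text{ a.e.}}$ and $\sup_{x \in \Omega}|x_2|=\pi^{-1/2}$, we conclude that $t \in \mathcal{T}$ is equivalent to $\norm*{\alpha_t'}_{L^\infty(\omega)} \leq C_* := \tfrac{\delta\sqrt{3\pi}}{2\sqrt 2\mu}$.

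\textbf{Lower bound on $t^*$.} By \cref{Lem:Example:UniqueSolutionElastic}, $\alpha_t \in [-\tfrac{\pi}{2}, 0]$ and so $\cos\alpha_t \in [0,1]$. Integrating the ODE in \eqref{Eq:ExampleELG} from $x_1$ to $1$ using $\alpha_t'(1)=0$, $\alpha_t'\le 0$, yields
\begin{equation*}
	\norm*{\alpha_t'}_{L^\infty(\omega)} = |\alpha_t'(0)| = \tfrac{2\pi\beta(t)}{\mu}\int_0^1 F\cos\alpha_t \leq \tfrac{2\pi\beta(t)\norm*{F}_{L^\infty(\omega)}}{\mu}.
\end{equation*}
This is $\leq C_*$ whenever $\beta(t) \leq \tfrac{\mu C_*}{2\pi\norm*{F}_\infty} = \tfrac{\delta\sqrt 3}{\norm*{F}_\infty\sqrt{32\pi}}$. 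The monotonicity of $\beta$ transfers this bound to all earlier times, proving the first inequality of the lemma.

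\textbf{Upper bound on $t^*$.} Assume conversely $t \in \mathcal{T}$, so $\norm*{\alpha_t'}_\infty \leq C_*$ and hence $|\alpha_t(x_1)| \leq x_1 C_*$ since $\alpha_t(0)=0$. Set $x_0 := \min\bigl(\tfrac{1}{2}, \tfrac{\pi}{3C_*}\bigr)$. On $[0, x_0]$ one has $|\alpha_t| \leq \pi/3$, so $\cos\alpha_t \geq \tfrac{1}{2}$, and $F \geq F(\tfrac{1}{2})$ since $F$ is decreasing and $x_0 \leq \tfrac{1}{2}$. Therefore,
\begin{equation*}
	\norm*{\alpha_t'}_\infty = \tfrac{2\pi\beta(t)}{\mu}\int_0^1 F\cos\alpha_t \geq \tfrac{2\pi\beta(t)}{\mu}\cdot \tfrac{F(\tfrac{1}{2})\, x_0}{2} = \tfrac{\pi\beta(t)F(\tfrac{1}{2})\,x_0}{\mu},
\end{equation*}
which combined with $\norm*{\alpha_t'}_\infty \leq C_*$ gives $\beta(t) \leq \tfrac{\mu}{\pi F(\frac12)}\max\set*{2C_*, \tfrac{3C_*^2}{\pi}}$. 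Substituting $C_*^2 = \tfrac{3\pi\delta^2}{8\mu^2}$ shows that the two candidate maxima are dominated, respectively, by $\tfrac{9\pi^2\mu}{8(\pi-3)F(\tfrac{1}{2})}$ and $\tfrac{9\mu}{8(\pi-3)F(\tfrac{1}{2})}\cdot \tfrac{3\pi\delta^2}{16\mu^2}$; hence $\beta(t)$ cannot exceed the claimed upper bound, and contraposition yields the second inequality. The main delicate point is the correct identification of the representative $\xi_*$ in the non-standard subspace $\mathcal{Q}_z$; everything else is an elementary interplay between the stability condition and the convex structure of \eqref{Eq:ExampleELG}, and the loose constants in the claimed bound merely reflect that any explicit upper bound on $\beta(t)$ in the elastic regime suffices.
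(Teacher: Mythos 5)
Your argument is essentially correct, and it deviates from the paper's proof in two interesting ways. First, you turn the stability condition into an exact equivalence $t \in \mathcal{T} \Leftrightarrow \norm*{\alpha_t'}_{\Leb^\infty(\omega)} \leq C_*$ with $C_* = \tfrac{\delta\sqrt{3\pi}}{2\sqrt{2}\mu}$, via the $\Leb^1$--$\Leb^\infty$ duality (equivalently, the pointwise representative of $-\operatorname{D}_z\mathcal{E}$ in the constrained space $\mathcal{Q}_z$, whose computation you carry out correctly, including the factor $\abs*{e_1\otimes e_1 - \tfrac13 I} = \sqrt{2/3}$). The paper instead works directly with the integral inequality \eqref{Eq:Example:CharacterizationElasticRegime}, using only the easy sufficient direction ($\norm*{\alpha_t'}_\infty \leq \tfrac{\delta\sqrt{3\pi}}{\mu\sqrt 8}$, i.e.\ your $C_*$) for the lower bound and a specific test function $z_{11} = \mathds{1}_{[0,\eps]}\mathds{1}_{[0,(2\pi)^{-1/2}]^2}$ to extract a necessary bound on $\abs*{\alpha_t'(0)}$ for the upper bound. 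Your lower-bound step then coincides with the paper's Step~1. Second, for the upper bound the paper runs an energy-comparison argument: assuming $\abs{\alpha_t'(0)} \leq c$ it constructs a competitor $\widetilde\alpha = \max\set{-\tfrac{3c}{2}x_1, -\tfrac\pi2}$ and uses minimality of $\alpha_t$ to force $\abs{\alpha_t'(0)} > c^*(t) \sim \sqrt{\beta(t)}$. You avoid this entirely: from $\norm*{\alpha_t'}_\infty \leq C_*$ you get $\abs{\alpha_t} \leq \pi/3$ on $[0,x_0]$, $x_0 = \min\set{\tfrac12, \tfrac{\pi}{3C_*}}$, and then the integral identity $\abs{\alpha_t'(0)} = \tfrac{2\pi\beta(t)}{\mu}\int_0^1 F\cos\alpha_t \geq \tfrac{\pi\beta(t)F(\frac12)x_0}{\mu}$ gives $\beta(t) \leq \tfrac{\mu}{\pi F(\frac12)}\max\set{2C_*, \tfrac{3C_*^2}{\pi}}$ directly. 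This is more elementary than the paper's route and yields (after comparison of constants) the same threshold; what the paper's variational argument buys is independence from the precise duality constant, since it only needs some bound on $\abs{\alpha_t'(0)}$.

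One step as written is not correct, although it is harmless and easily repaired: the claim that the two candidates $\tfrac{2\mu C_*}{\pi F(\frac12)}$ and $\tfrac{3\mu C_*^2}{\pi^2 F(\frac12)}$ are dominated \emph{respectively} by the two terms of the lemma's max. For the first pair this reads $2C_* \leq \tfrac{9\pi^3}{8(\pi-3)}$, which fails once $\delta/\mu$ is large (roughly $\delta \gtrsim 113\,\mu$). What is true, and what you actually need, is the comparison of the maxima: if $C_* \leq \tfrac{2\pi}{3}$ then $\max\set{2C_*,\tfrac{3C_*^2}{\pi}} = 2C_* \leq \tfrac{4\pi}{3} < \tfrac{9\pi}{8(\pi-3)}\pi^2$, while if $C_* > \tfrac{2\pi}{3}$ then $\max\set{2C_*,\tfrac{3C_*^2}{\pi}} = \tfrac{3C_*^2}{\pi} < \tfrac{9\pi}{8(\pi-3)}\cdot\tfrac{C_*^2}{2}$, and $\tfrac{C_*^2}{2} = \tfrac{3\pi\delta^2}{16\mu^2}$ is the second term of the lemma's max. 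With this two-case check (and noting the resulting inequality is strict, so the contraposition together with the monotonicity of $\beta$ indeed gives $t^* \leq \inf\set{t \given \beta(t) \geq \tfrac{9\mu}{8(\pi-3)F(\frac12)}\max\set{\pi^2,\tfrac{3\pi\delta^2}{16\mu^2}}}$), your proof is complete.
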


\begin{proof}
Using the definition $\partial\mathcal{R}(0) = \set*{\xi \in \mathcal{Q}_z' \given \mathcal{R}(z) \geq \mathcal{R}(0) + \langle \xi, z\rangle \text{ for all }  z \in \mathcal{Q}_z}$, we calculate
\begin{align}
	-\operatorname{D}_z\mathcal{E}(t,\alpha_t,0) \in \partial\mathcal{R}(0)
	&\Leftrightarrow \frac{\mu}{2\pi}\int_0^1 \alpha_t' \left[4\pi{\textstyle\fint_S x_2 z_{11}}\,\dd\bar{x}\right] \,\dd x_1 \geq -\delta \int_\Omega \abs*{z} \text{ for all } z \in \mathcal{Q}_z \notag\\
	&\Leftrightarrow \abs*{\int_0^1 \alpha_t' \left[{\textstyle\fint_S x_2 z_{11}}\,\dd\bar{x}\right] \,\dd x_1} \leq \frac{\delta}{2\mu} \int_\Omega \abs*{z} \text{ for all } z \in \mathcal{Q}_z \notag\\
	&\Leftrightarrow \abs*{\int_0^1 \alpha_t' \left[{\textstyle\fint_S x_2 z_{11}}\,\dd\bar{x}\right] \,\dd x_1} \leq \frac{\delta\sqrt{3}}{\mu\sqrt{8}} \int_\Omega \abs*{z_{11}} \text{ for all } z_{11} \in \Leb^2(\Omega). \label{Eq:Example:CharacterizationElasticRegime}
\end{align}

\stepemph{Step 1 -- Lower bound:} In view of the definition of $S$, a sufficient condition for \eqref{Eq:Example:CharacterizationElasticRegime} is $\norm*{\alpha_t'}_\infty \leq \frac{\delta\sqrt{3\pi}}{\mu\sqrt{8}}$. Using \eqref{Eq:Example:IntegralEquation} we obtain
\begin{equation*}
	\abs{\alpha_t'(t, x_1)} = \frac{2\pi\beta(t)}{\mu} \int_{x_1}^1 F(s) \cos \alpha_t(s) \,\dd s \leq \frac{2\pi\beta(t) \norm*{F}_\infty}{\mu}.
\end{equation*}
Hence, as long as $\frac{2\pi\beta(t) \norm*{F}_\infty}{\mu} \leq \frac{\delta\sqrt{3\pi}}{\mu\sqrt{8}}$ we find $s \in \mathcal{T}$ for a.e.\ $s \in [0,t]$,  which establishes the lower bound for $t^*$.
\medskip

\stepemph{Step 2 -- Upper bound:} Testing \eqref{Eq:Example:CharacterizationElasticRegime} with $z_{11}(x) := \mathds{1}_{[0,\eps]}(x_1)\mathds{1}_{[0,(2\pi)^{-1/2}]^2}(\bar{x})$ for all $\eps > 0$, we get that necessarily
\begin{equation} \label{Eq:Example:UpperBoundTstar}
	\abs*{\alpha_t'(0)} = \lim_{\eps \to 0} \fint_0^\eps\abs*{\alpha_t'} \leq \tfrac{\delta\sqrt{3\pi}}{\mu\sqrt{16}}.
\end{equation}
We provide a rough lower bound for $\abs*{\alpha_t'(0)}$. Suppose $\abs*{\alpha_t'(0)} \leq c$ for some $c \geq \pi$. \cref{Eq:ExampleELG} shows that $\alpha_t'$ is non-positive and increasing. Thus, we obtain
\begin{equation*}
	\alpha_t(x_1) \geq \max\set*{-c x_1, -\tfrac{\pi}{2}} = \begin{cases} -c x_1 & \text{if } x_1 \leq \tfrac{\pi}{2c}, \\ -\tfrac{\pi}{2} & \text{else}	\end{cases} =: \bar{\alpha}(x_1).
\end{equation*}
Consider $\widetilde{\alpha}(x_1) := \max\set*{-\tfrac{3c}{2}x_1, -\tfrac{\pi}{2}}$. Note that $\widetilde{\alpha} \leq \bar{\alpha} \leq \alpha_t$ and $\widetilde{\alpha} \neq \alpha_t$. We estimate the energy $\mathcal{E}(t,\alpha_t,0)$ against $\mathcal{E}(t,\widetilde{\alpha},0)$. Using the monotonicity of $\sin$ on $[-\frac{\pi}{2},0]$, we obtain
\begin{align*}
	\mathcal{E}(t,\widetilde{\alpha}, 0) &= \frac{\mu}{4\pi} \int_0^1 \abs*{\widetilde{\alpha}'}^2 + \beta(t) \int_0^1 F \sin\widetilde{\alpha} \\
	&= \frac{3c \mu}{16} + \beta(t) \left[ \int_0^{\frac{\pi}{3c}} F(x_1)\sin(-\tfrac{3c}{2} x_1) \,\dd x_1 - \int_{\frac{\pi}{3c}}^{\frac{\pi}{2c}} F - \int_{\frac{\pi}{2c}}^1 F\right] \\
	&\leq \frac{3c \mu}{16} - \beta(t)\int_{\frac{\pi}{3c}}^{\frac{\pi}{2c}} F(x_1) (1 + \sin(-cx_1)) \,\dd x_1 + \beta(t) \int_0^1 F\sin\bar{\alpha} \\
	&\leq \frac{3c \mu}{16} - \beta(t)\int_{\frac{\pi}{3c}}^{\frac{\pi}{2c}} F(x_1) (1 - \sin(cx_1)) \,\dd x_1 + \mathcal{E}(t,\alpha_t,0).
\end{align*}
Since $\alpha(t)$ is the unique minimizer of $\mathcal{E}(t,\placeholder,0)$, we observe
\begin{equation*}
	\frac{3c \mu}{16} > \beta(t)\int_{\frac{\pi}{3c}}^{\frac{\pi}{2c}} F(x_1) (1 + \sin(-cx_1)) \,\dd x_1 \geq \beta(t)F(\tfrac{1}{2}) \int_{\frac{\pi}{3c}}^{\frac{\pi}{2c}} 1 - \sin(cx_1) \,\dd x_1 =  \frac{(\pi - 3)\beta(t)F(\tfrac{1}{2})}{6c}.
\end{equation*}
Here, the second inequality holds, since $c \geq \pi$ and $F$ is decreasing. We conclude that
\begin{equation*}
	\abs*{\alpha_t'(0)} > \sqrt{ \frac{8(\pi - 3)\beta(t)F(\tfrac{1}{2})}{9\mu}} =: c^*(t), \qquad
	\text{if } c^*(t) \geq \pi.
\end{equation*}
Combining this result with \eqref{Eq:Example:UpperBoundTstar}, we conclude that $t \nin \mathcal{T}$, if $c^*(t) \geq \max\set*{\pi, \frac{\delta\sqrt{3\pi}}{\mu\sqrt{16}}}$, which establishes the upper bound for $t^*$.
\end{proof}

% !TEX root = bending_rod_elastoplastic.tex
% !TeX spellcheck = en_US
% !TeX encoding = UTF-8
% !TeX program = pdflatex

\appendix
\section{Appendix}

\subsection{Equivalence of strain gradient terms}

We show that the strain gradient terms $\int_{\Omega} H^C_\el\big(\grad \sqrt{C_y}\big)$ and $\int_{\Omega} H^R_\el\big(\grad R_y\big)$ can be replaced by $\int_{\Omega} H^C_\el\big(\grad C_y\big)$ and $\int_{\Omega} H^R_\el\big(\grad \grad[h]y\big)$ respectively, without changing limit and procedure for the proofs. This is due to the following lemma.

\begin{lemma} \label{Lem:EquivalenceRestTerms}
Let $(F^h), (\hat{F^h}) \subset \SobW^{1,p}(\Omega, \R^{3\times 3})$ with $\det(F^h), \det(\hat{F}^h) > 0$ a.e.\ in $\Omega$. Consider the polar decompositions $F^h = R^h_* \sqrt{C^h}$ and $\hat{F}^h = \hat{R}^h_* \sqrt{\hat{C}^h}$ where $C^h := (F^h)^TF^h$ and $\hat{C}^h := (\hat{F}^h)^T\hat{F}^h$.
\begin{enumerate}[(a)]
	\item We have
	\begin{equation}
		\limsup_{h \to 0} h^{-\alpha_C}\norm[\big]{\sqrt{C^h} - I}_{\Leb^\infty(\Omega)} < \infty
		\quad\Leftrightarrow\quad \limsup_{h \to 0} h^{-\alpha_C}\norm*{C^h - I}_{\Leb^\infty(\Omega)} < \infty.
	\end{equation}
	
	\item Suppose $\limsup_{h \to 0} h^{-\alpha_C}\norm[\big]{\sqrt{C^h} - I}_{\Leb^\infty(\Omega)} < \infty$. Then,
	\begin{subequations}
	\begin{align}
		\limsup_{h \to 0} h^{-\alpha_C}\norm[\big]{\grad\sqrt{C^h}}_{\Leb^p(\Omega)} < \infty
		&\quad\Leftrightarrow\quad \limsup_{h \to 0} h^{-\alpha_C}\norm*{\grad C^h}_{\Leb^p(\Omega)} < \infty, \label{Eq:EquivalenceRestTermsCh}
	\intertext{and if additionally one (and then both) of these statements hold, then,}
		\limsup_{h \to 0} h^{\alpha_R}\norm[\big]{\grad R^h_*}_{\Leb^p(\Omega)} < \infty
		&\quad\Leftrightarrow\quad \limsup_{h \to 0} h^{\alpha_R}\norm*{\grad F^h}_{\Leb^p(\Omega)} < \infty. \label{Eq:EquivalenceRestTermsRh}
	\end{align}
	\end{subequations}
	
	\item Suppose $\limsup_{h \to 0} h^{-\alpha_C}\Big(\norm[\big]{\sqrt{C^h} - I}_{\Leb^\infty(\Omega)} + \norm[\big]{\sqrt{\hat{C}^h} - I}_{\Leb^\infty(\Omega)} + \norm[\big]{\grad\sqrt{C^h}}_{\Leb^p(\Omega)}\Big) < \infty$. Then,
	\begin{subequations}
	\begin{align}
		\lim_{h \to 0} h^{-\alpha_C}\norm[\big]{\grad\sqrt{\hat{C}^h} - \grad\sqrt{C^h}}_{\Leb^p(\Omega)} = 0
		&\quad\Leftrightarrow\quad \lim_{h \to 0} h^{-\alpha_C}\norm*{\grad\hat{C}^h - \grad C^h}_{\Leb^p(\Omega)} = 0,
	\intertext{and if additionally $\limsup_{h \to 0} h^{\alpha_R}\norm*{\grad R^h_*}_{\Leb^p(\Omega)} < \infty$ and one (and then both) of these statements hold, then for arbitrary $(A^h), (\hat{A}^h) \subset \Leb^\infty(\Omega, \R^{3\times 3})$ bounded,}
		\lim_{h \to 0} h^{\alpha_R}\norm*{\hat{A}^h\grad\hat{R}^h_* - A^h\grad R^h_*}_{\Leb^p(\Omega)} = 0
		&\quad\Leftrightarrow\quad \lim_{h \to 0} h^{\alpha_R}\norm*{\hat{A}^h\grad\hat{F}^h - A^h\grad F^h}_{\Leb^p(\Omega)} = 0.
	\end{align}
	\end{subequations}
\end{enumerate}
\end{lemma}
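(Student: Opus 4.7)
The plan is to exploit the smoothness of the matrix square root on a neighbourhood of the identity together with componentwise product-rule identities; the scalings $\alpha_C,\alpha_R>0$ then take care of the rest.

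For (a) we use the algebraic identity $C^h - I = (\sqrt{C^h}-I)(\sqrt{C^h}+I)$. The direction ``$\Rightarrow$'' is immediate since $|\sqrt{C^h}+I|\leq 3$ on a bounded neighbourhood of $I$. For ``$\Leftarrow$'' we observe that a bound on $\|C^h-I\|_{L^\infty}$ of order $h^{\alpha_C}$ forces $C^h$ into a region where the matrix square root is smooth (hence Lipschitz), so $\|\sqrt{C^h}-I\|_{L^\infty}\leq C\|C^h-I\|_{L^\infty}$.

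For (b), differentiating $C^h = \sqrt{C^h}\sqrt{C^h}$ componentwise yields the Lyapunov-type identity
\begin{equation*}
\partial_k C^h = (\partial_k\sqrt{C^h})\sqrt{C^h} + \sqrt{C^h}(\partial_k\sqrt{C^h}),\qquad k=1,2,3.
\end{equation*}
The forward implication in \eqref{Eq:EquivalenceRestTermsCh} is a direct consequence of $\|\sqrt{C^h}\|_{L^\infty}$ being bounded. For the reverse, the linear map $L\mapsto L\sqrt{C^h}+\sqrt{C^h}L$ is close to $L\mapsto 2L$ when $\sqrt{C^h}\approx I$, so it is invertible with uniformly bounded inverse, giving the pointwise bound $|\partial_k\sqrt{C^h}|\leq C|\partial_k C^h|$. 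Likewise, differentiating the polar decomposition $F^h=R^h_*\sqrt{C^h}$ gives $\partial_k F^h = (\partial_k R^h_*)\sqrt{C^h}+R^h_*(\partial_k\sqrt{C^h})$, so after multiplying by $h^{\alpha_R}$ the correction term is controlled by $h^{\alpha_R}\|\grad\sqrt{C^h}\|_{L^p} = h^{\alpha_R+\alpha_C}\cdot(h^{-\alpha_C}\|\grad\sqrt{C^h}\|_{L^p})\to 0$, since $\alpha_R+\alpha_C>0$. This yields \eqref{Eq:EquivalenceRestTermsRh}.

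For (c) we reuse the same identities in difference form. Writing
\begin{equation*}
\partial_k(\hat{C}^h-C^h) = (\partial_k\sqrt{\hat{C}^h}-\partial_k\sqrt{C^h})\sqrt{C^h} + \sqrt{\hat{C}^h}(\partial_k\sqrt{\hat{C}^h}-\partial_k\sqrt{C^h}) + E^h_k,
\end{equation*}
the remainder $E^h_k := (\partial_k\sqrt{\hat{C}^h})(\sqrt{\hat{C}^h}-\sqrt{C^h}) + (\sqrt{\hat{C}^h}-\sqrt{C^h})(\partial_k\sqrt{C^h})$ is estimated in $L^p$ by $\|\sqrt{\hat{C}^h}-\sqrt{C^h}\|_{L^\infty}\cdot(\|\grad\sqrt{C^h}\|_{L^p}+\|\grad\sqrt{\hat{C}^h}\|_{L^p})$, which after scaling by $h^{-\alpha_C}$ is $o(1)$ thanks to (a) and the assumed $L^\infty$ bounds. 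Inverting the Lyapunov operator as in (b) gives the equivalence for the gradients of $\sqrt{C^h}$ versus $C^h$. For the rotational equivalence we apply the product rule to the difference $\hat{A}^h\partial_k\hat{F}^h-A^h\partial_k F^h$, separate it into a rotational term (to which $\sqrt{\hat{C}^h}\to I$ couples) and a strain-gradient correction scaled by $h^{\alpha_R+\alpha_C}$, and conclude as before.

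The main obstacle is bookkeeping in the difference argument of (c): one must verify that the perturbed Lyapunov operator $L\mapsto L\sqrt{C^h}+\sqrt{\hat{C}^h}L$ remains invertible with a bound that is uniform in $h$ (for which the $L^\infty$ smallness of $\sqrt{C^h}-I$ and $\sqrt{\hat{C}^h}-I$ from (a) is crucial), and that all cross terms involving $\grad\sqrt{C^h}$ or $\grad\sqrt{\hat{C}^h}$ multiplied by the $L^\infty$-small factor $\sqrt{\hat{C}^h}-\sqrt{C^h}$ can be absorbed into the target rate. Everything else is purely algebraic manipulation in conjunction with the relations $\alpha_R+\alpha_C>0$ and $\alpha_C>0$.
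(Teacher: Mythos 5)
Your proposal is correct and follows essentially the same route as the paper: the same product-rule identities for $C^h=\sqrt{C^h}\sqrt{C^h}$, $F^h=R^h_*\sqrt{C^h}$ and their difference forms, with the small factors $\sqrt{C^h}-I$, $\sqrt{\hat{C}^h}-I$ (of order $h^{\alpha_C}$ in $\Leb^\infty$) absorbed and the scalings $h^{\alpha_R+\alpha_C}\to 0$ exploited exactly as in the paper's argument. Your phrasing of the reverse estimates via uniform invertibility of the Sylvester-type operator $L\mapsto L\sqrt{C^h}+\sqrt{\hat{C}^h}L$ is just a repackaging of the paper's rearranged-identity absorption; the only point to make explicit is that $\sqrt{C^h}\in\SobW^{1,p}$ (smoothness of the square root near $I$, as the paper notes via the implicit function theorem) before applying the product rule in the directions where only $\grad C^h$ is controlled.
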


\begin{proof}
We sketch the proof by providing the essential identities that establish the statements. For (a) we note
\begin{equation*}
	C^h - I = (\sqrt{C^h} - I)(\sqrt{C^h} + I), \qquad
	\sqrt{C^h} - I = \tfrac{1}{2}(C^h - I) + \oclass(\abs*{C^h - I}).
\end{equation*}
Here, the second equation is due to a Taylor expansion of the map $\R^{3\times 3}_{\sym} \ni C \mapsto \sqrt{C}$ at $I$. Indeed, one may appeal to the implicit function theorem to show that this map is continuously differentiable near $I$.

For (b) first note that for small enough $h \ll 1$, $C^h \in \SobW^{1,p}(\Omega, \R^{3\times 3})$, since $F^h$ is bounded in $\Leb^\infty(\Omega, \R^{3\times 3})$ by the assumption $\limsup_{h \to 0} h^{-\alpha_C}\norm[\big]{\sqrt{C^h} - I}_{\Leb^\infty(\Omega)} < \infty$. We utilize again the implicit function theorem to show that also $\sqrt{C^h} \in \SobW^{1,p}(\Omega, \R^{3\times 3})$ and the following equations hold:
\begin{equation*}
	\partial_i C^h = 2 \sym \big( \sqrt{C^h} \partial_i \sqrt{C^h} \big), \qquad
	\partial_i \sqrt{C^h} = \partial_i C^h + 2\sym\big( (I - \sqrt{C^h})\partial_i \sqrt{C^h}) \big), \qquad
	i=1,2,3.
\end{equation*}
From these equations one may prove the first part of (b) easily using standard estimation techniques. Furthermore, the second part follows from the identity
\begin{equation*}
	\partial_i F^h = R^h_* \partial_i \sqrt{C^h} + (\partial_i R^h_*) \sqrt{C^h}.
\end{equation*}
Indeed, one may use that a Taylor expansion of the map $\Gl_+(3) \ni F \mapsto F^{-1}$ at $I$ shows that
\begin{equation*}
	\limsup_{h \to 0} h^{-\alpha_C} \norm*{(\sqrt{C^h})^{-1} - I}_{\Leb^\infty(\Omega)} < \infty.
\end{equation*}

Finally, for (c) one may argue as for (b) using the identities
\begin{align*}
	\partial_i (\hat{C}^h - C^h) &= 2\sym \Big( \sqrt{\hat{C}^h}\partial_i\big(\sqrt{\hat{C}^h} - \sqrt{C^h}\big) + \big(\sqrt{\hat{C}^h} - I\big)\partial_i \sqrt{C^h} + \big(I - \sqrt{C^h}\big)\partial_i \sqrt{C^h} \Big), \\
	\partial_i (\sqrt{\hat{C}^h} - \sqrt{C^h}) &= \tfrac{1}{2}\partial_i(\hat{C}^h - C^h) + \sym \left( (I - \sqrt{\hat{C}^h})\partial_i (\sqrt{\hat{C}^h} - \sqrt{C^h}) + (\sqrt{C^h} - \sqrt{\hat{C}^h})\partial_i\sqrt{C^h}\right), \\
	\hat{A}^h \partial_i \hat{F}^h - A^h \partial_i F^h &= \begin{aligned}[t]
		\big( \hat{A}^h \partial_i \hat{R}^h_* - A^h \partial_i R^h_* \big)\sqrt{\hat{C}^h} + A^h(\partial_i R^h_*)(\sqrt{\hat{C}^h} - \sqrt{C^h}) \\
		+ \hat{A}^h \hat{R}^h_* (\partial_i \sqrt{\hat{C}^h}) - A^h R^h_* (\partial_i \sqrt{C^h}).
	\end{aligned}
\end{align*}
These equations are consequences of the ones proposed in the argumentation for (b). 
\end{proof}

\subsection{Weak and a.e.\ convergence in Lebesgue spaces}

\begin{lemma}[{cf.\ \cite[Prop.~A.2]{FM06}}] \label{Lem:PointwiseConvergenceFromWeakConvergence}
Let $1 \leq p < \infty$, $U \subset \R^d$ measurable, and $(f_k) \subset \Leb^p(U)$, $f \in \Leb^p(U)$, such that $f_k \wto f$ in $\Leb^p(U)$ and $f(x) = \limsup_{k \to \infty} f_k(x)$ (or $f(x) = \liminf_{k \to \infty} f_k(x)$, respectively) for a.e.\ $x \in U$. Then, $f_k(x) \to f(x)$ for a.e.\ $x \in U$.
\end{lemma}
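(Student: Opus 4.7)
The plan is to treat only the $\limsup$-case; the $\liminf$-case follows by applying the same argument to $(-f_k,-f)$. The core idea is to split $f_k-f$ into its nonnegative pieces $h_k:=(f_k-f)_+$ and $g_k:=(f_k-f)_-$ and control each separately: the pointwise hypothesis will handle $h_k$, while weak convergence will handle $g_k$.

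First I would use that $\limsup_k f_k=f$ a.e.\ is equivalent to $\limsup_k (f_k-f)=0$ a.e., which immediately yields $h_k\to 0$ pointwise a.e.\ (since $(a_k)_+\to 0$ whenever $\limsup_k a_k\le 0$). To pass to $L^1$, Banach--Steinhaus provides an $L^p$-bound on $(f_k)$, hence equi-integrability of $(h_k)$ on sets of finite measure---via H\"older's inequality when $p>1$, and via the Dunford--Pettis characterization of weak sequential compactness when $p=1$. Vitali's convergence theorem then gives $h_k\to 0$ in $L^1(A)$ for every $A\subset U$ with $|A|<\infty$.

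Next I would exploit the weak convergence by testing against $\chi_A\in L^q(U)$ (with $q$ H\"older conjugate to $p$) for $A$ of finite measure, obtaining $\int_A (f_k-f)\to 0$. Subtracting the previous step yields $\int_A g_k\to 0$, and nonnegativity upgrades this to $g_k\to 0$ in $L^1(A)$. Combining both halves, $|f_k-f|=h_k+g_k\to 0$ in $L^1_{\mathrm{loc}}(U)$, and in particular $f_k\to f$ in measure.

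The hard part will be upgrading $L^1_{\mathrm{loc}}$-convergence to the claimed full-sequence a.e.\ convergence, since convergence in measure alone only yields a.e.\ convergence along subsequences. Here the pointwise $\limsup$-hypothesis must be used a second time: combined with the a.e.\ identity $h_k\to 0$ already established (which forbids persistent upward oscillation of $f_k$ above $f$) and the subsequential a.e.\ convergence of $g_k\ge 0$ to $0$ extracted from its $L^1_{\mathrm{loc}}$-convergence, a standard extraction-and-uniqueness argument forces every subsequential a.e.\ limit of $f_k$ to equal $f$, yielding $f_k\to f$ a.e.\ on $U$.
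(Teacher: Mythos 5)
Your Steps 1--3 are correct, and in fact they already prove more than the paper's own argument does: writing $f_k-f=(f_k-f)_+-(f_k-f)_-$, the pointwise hypothesis gives $(f_k-f)_+\to 0$ a.e., uniform integrability (H\"older for $p>1$, Dunford--Pettis for $p=1$) upgrades this to $(f_k-f)_+\to 0$ in $\Leb^1(A)$ for every $A$ of finite measure, and testing the weak convergence against $\mathds{1}_A$ then forces $(f_k-f)_-\to 0$ in $\Leb^1(A)$ as well, so $f_k\to f$ strongly in $\Leb^1_{\mathrm{loc}}(U)$. The gap is exactly where you anticipated it, and it cannot be closed: the \enquote{extraction-and-uniqueness} pattern (every subsequence has a further subsequence converging a.e.\ to $f$) characterizes convergence \emph{in measure}, not a.e.\ convergence, and the extra pointwise information $(f_k-f)_+\to 0$ a.e.\ only forbids oscillation of $f_k$ \emph{above} $f$, saying nothing about downward spikes. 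Concretely, take $U=(0,1)$, let $(I_k)$ be the typewriter sequence of dyadic intervals $I_{2^n+j}=[j2^{-n},(j+1)2^{-n}]$, $0\le j<2^n$, and set $f_k:=-\mathds{1}_{I_k}$, $f:=0$. Then $f_k\to 0$ strongly (a fortiori weakly) in every $\Leb^p$ and $\limsup_{k}f_k(x)=0=f(x)$ for a.e.\ $x$, yet $\liminf_k f_k(x)=-1$ a.e., so $f_k(x)$ converges for no $x$. The lemma as stated is therefore false, and no proof can succeed.

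You should also flag that the paper's own proof fails at the corresponding point: in its displayed Fatou estimate one has $\liminf_k\big(\tilde f(x)-f_k(x)\big)=\tilde f(x)-\limsup_k f_k(x)=\tilde f(x)-f(x)$, so the asserted \enquote{$=0$} for the middle integral is exactly the identity $\tilde f=f$ that is being proved (and the quantity is in fact $\le 0$); the argument is circular. What the hypotheses genuinely yield --- and what your Steps 1--3 establish --- is strong convergence $f_k\to f$ in $\Leb^1_{\mathrm{loc}}(U)$ (in $\Leb^1(U)$ if $\abs{U}<\infty$), hence a.e.\ convergence along a subsequence. This weaker conclusion appears to suffice for the only use of the lemma, in Step~5 of the proof of \cref{Prop:ConvergenceOfSolutions}: there strong $\Leb^1([0,T])$ convergence of $\partial_t\mathcal{E}^h(\placeholder,q^h(\placeholder))$ is what is actually needed, and the \enquote{for a.e.\ $t$} clause in \eqref{Eq:ConvergenceOfApproxSolutions} should be weakened to convergence along a subsequence or dropped.
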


\begin{proof}
Assume $f(x) = \limsup_{k \to \infty} f_k(x)$ for a.e.\ $x \in U$. The claim for $f(x) = \liminf_{k \to \infty} f_k(x)$ can be obtained by considering $-f_k$ and $-f$. Consider $\tilde{f}(x) := \liminf_{k \to \infty} f_k(x)$. Using Fatou's lemma (with uniformly integrable sequence of minorants, cf.\ \cite[Thm.~B.3.5]{MR15}), we obtain $\tilde{f} \in \Leb^1(U)$. Using Fatou's lemma again and $f_k \wto f$, we calculate for any $\varphi \in \Cont^\infty_c(U)$,
% We can apply Fatou, since $\tilde{f} - f_k$ is equiintegrable, see [MR15] Thm. B.3.5 and B.3.8
% Note that $\tilde{f} \in \Leb^1(U) by [MR15] Thm. B.3.5$
\begin{align*}
	\int_U (\tilde{f} - f) \varphi
	= \lim_{k \to \infty} \int_U (\tilde{f} - f_k) \varphi
	\geq \int_U \liminf_{k \to \infty} (\tilde{f}(x) - f_k(x)) \varphi(x) \,\dd x
	= 0.
\end{align*}
Thus, $f(x) \leq \tilde{f}(x)$ for a.e.\ $x \in U$. Hence, using $f(x) = \limsup_{k \to \infty} f_k(x)$, we conclude the claim from,
\begin{equation*}
	\limsup_{k \to \infty} f_k(x) = f(x) \leq \tilde{f}(x) = \liminf_{k \to \infty} f_k(x) \leq \limsup_{k \to \infty} f_k(x).
\qedhere
\end{equation*}
\end{proof}

%%% BIBLIOGRAPHY
\clearpage
\printbibliography[heading=bibintoc]

@Article{BNS20,
  author    = {Bauer, Robert AND Neukamm, Stefan AND Sch{\"a}ffner, Mathias},
  journal   = {Journal of Elasticity},
  title     = {Derivation of a Homogenized Bending--Torsion Theory for Rods with Micro-Heterogeneous Prestrain},
  year      = {2020},
  issn      = {0374-3535},
  number    = {1},
  pages     = {109--145},
  volume    = {141},
  doi       = {10.1007/s10659-020-09777-6},
  publisher = {Springer},
}

@Article{MM03,
  author    = {Mora, Maria Giovanna AND M{\"u}ller, Stefan},
  journal   = {Calculus of Variations and Partial Differential Equations},
  title     = {Derivation of the nonlinear bending-torsion theory for inextensible rods by $\Gamma$ -convergence},
  year      = {2003},
  issn      = {1432-0835},
  month     = nov,
  number    = {3},
  pages     = {287--305},
  volume    = {18},
  doi       = {10.1007/s00526-003-0204-2},
  publisher = {Springer Science and Business Media LLC},
}

@Article{MR20,
  author    = {Mielke, Alexander and Roub\'{i}\v{c}ek, Tom\'{a}\v{s}},
  journal   = {Archive for Rational Mechanics and Analysis},
  title     = {Thermoviscoelasticity in Kelvin--Voigt Rheology at Large Strains},
  year      = {2020},
  number    = {1},
  pages     = {1--45},
  volume    = {238},
  doi       = {10.1007/s00205-020-01537-z},
  publisher = {Springer},
}

@Article{BFK23,
  author    = {Rufat Badal and Manuel Friedrich and Kru\v{z}\'{i}k, Martin},
  journal   = {Archive for Rational Mechanics and Analysis},
  title     = {Nonlinear and Linearized Models in Thermoviscoelasticity},
  year      = {2023},
  issn      = {0003-9527},
  month     = {jan},
  number    = {1},
  volume    = {247},
  doi       = {10.1007/s00205-022-01834-9},
  publisher = {Springer Science and Business Media {LLC}},
}

@Article{NR24-PP,
  author       = {Neukamm, Stefan and Richter, Kai},
  title        = {Linearization and Homogenization of nonlinear elasticity close to stress-free joints},
  year         = {2024},
  copyright    = {arXiv.org perpetual, non-exclusive license},
  doi          = {10.48550/arXiv.2406.04831},
  journaltitle = {arXiv preprint},
  publisher    = {arXiv},
}

@Article{BS24,
  author    = {Benoit-Maréchal, Lucas and Salvalaglio, Marco},
  journal   = {Modelling and Simulation in Materials Science and Engineering},
  title     = {Gradient elasticity in Swift–Hohenberg and phase-field crystal models},
  year      = {2024},
  issn      = {1361-651X},
  month     = may,
  number    = {5},
  pages     = {055005},
  volume    = {32},
  doi       = {10.1088/1361-651x/ad42bb},
  publisher = {IOP Publishing},
}

@Article{FK18,
  author    = {Friedrich, Manuel and Kružík, Martin},
  journal   = {SIAM Journal on Mathematical Analysis},
  title     = {On the Passage from Nonlinear to Linearized Viscoelasticity},
  year      = {2018},
  issn      = {1095-7154},
  month     = jan,
  number    = {4},
  pages     = {4426--4456},
  volume    = {50},
  doi       = {10.1137/17m1131428},
  publisher = {Society for Industrial & Applied Mathematics (SIAM)},
}

@Article{NV13,
  author  = {Neukamm, Stefan and Vel{\v{c}}i{\'c}, Igor},
  journal = {Mathematical Models and Methods in Applied Sciences},
  title   = {Derivation of a homogenized von-Kármán plate theory from 3D nonlinear elasticity},
  year    = {2013},
  number  = {14},
  pages   = {2701--2748},
  volume  = {23},
  doi     = {10.1142/S0218202513500449},
}

@Article{MT04,
  author    = {Mielke, Alexander AND Theil, Florian},
  journal   = {Nonlinear Differential Equations and Applications},
  title     = {On rate-independent hysteresis models},
  year      = {2004},
  pages     = {151--189},
  volume    = {11},
  doi       = {10.1007/s00030-003-1052-7},
  publisher = {Birkh{\"a}user-Verlag},
}

@Article{BK23,
  author    = {Bresciani, Marco and Kružík, Martin},
  title     = {A Reduced Model for Plates Arising as Low-Energy {$\Gamma$}-Limit in Nonlinear Magnetoelasticity},
  doi       = {10.1137/21m1446836},
  issn      = {1095-7154},
  number    = {4},
  pages     = {3108--3168},
  volume    = {55},
  journal   = {SIAM Journal on Mathematical Analysis},
  month     = jul,
  publisher = {Society for Industrial & Applied Mathematics (SIAM)},
  year      = {2023},
}

@Article{DSV09,
  author    = {dell’Isola, F. and Sciarra, G. and Vidoli, S.},
  journal   = {Proceedings of the Royal Society A: Mathematical, Physical and Engineering Sciences},
  title     = {Generalized Hooke’s law for isotropic second gradient materials},
  year      = {2009},
  issn      = {1471-2946},
  month     = apr,
  number    = {2107},
  pages     = {2177--2196},
  volume    = {465},
  doi       = {10.1098/rspa.2008.0530},
  publisher = {The Royal Society},
}

@Article{DKPS21,
  author    = {Davoli, Elisa and Kru\v{z}\'{i}k, Martin and Piovano, Paolo and Stefanelli, Ulisse},
  journal   = {Continuum Mechanics and Thermodynamics},
  title     = {Magnetoelastic thin films at large strains},
  year      = {2021},
  number    = {2},
  pages     = {327--341},
  volume    = {33},
  doi       = {10.1007/s00161-020-00904-1},
  publisher = {Springer},
}

@Article{BGNPP23,
  author    = {S{\"o}ren Bartels and Max Griehl and Stefan Neukamm and David Padilla-Garza and Christian Palus},
  journal   = {Mathematical Models and Methods in Applied Sciences},
  title     = {A nonlinear bending theory for nematic {LCE} plates},
  year      = {2023},
  month     = {may},
  number    = {07},
  pages     = {1437--1516},
  volume    = {33},
  doi       = {10.1142/s0218202523500331},
  publisher = {World Scientific Pub Co Pte Ltd},
}

@Article{Min64,
  author    = {Mindlin, R. D.},
  title     = {Micro-structure in linear elasticity},
  doi       = {10.1007/bf00248490},
  issn      = {1432-0673},
  number    = {1},
  pages     = {51--78},
  volume    = {16},
  journal   = {Archive for Rational Mechanics and Analysis},
  month     = jan,
  publisher = {Springer Science and Business Media LLC},
  year      = {1964},
}

@Article{BNPS22,
  author    = {Klaus Böhnlein and Stefan Neukamm and David Padilla-Garza and Oliver Sander},
  journal   = {Journal of Nonlinear Science},
  title     = {A Homogenized Bending Theory for Prestrained Plates},
  year      = {2022},
  number    = {1},
  volume    = {33},
  doi       = {10.1007/s00332-022-09869-8},
  publisher = {Springer Science and Business Media {LLC}},
}

@Article{GOW20,
  author    = {Griso, Georges and Orlik, Julia and Wackerle, Stephan},
  journal   = {Journal de Mathématiques Pures et Appliquées},
  title     = {Asymptotic behavior for textiles in von-Kármán regime},
  year      = {2020},
  issn      = {0021-7824},
  month     = dec,
  pages     = {164--193},
  volume    = {144},
  doi       = {10.1016/j.matpur.2020.10.002},
  publisher = {Elsevier BV},
}

@Article{FK20,
  author    = {Friedrich, Manuel and Kru\v{z}\'{i}k, Martin},
  journal   = {Archive for Rational Mechanics and Analysis},
  title     = {Derivation of von K{\'{a}}rm{\'{a}}n Plate Theory in the Framework of Three-Dimensional Viscoelasticity},
  year      = {2020},
  issn      = {0003-9527},
  month     = {jun},
  number    = {1},
  pages     = {489--540},
  volume    = {238},
  doi       = {10.1007/s00205-020-01547-x},
  publisher = {Springer Science and Business Media {LLC}},
}

@Article{Neu12,
  author    = {Neukamm, Stefan},
  journal   = {Archive for Rational Mechanics and Analysis},
  title     = {Rigorous Derivation of a Homogenized Bending-Torsion Theory for Inextensible Rods from Three-Dimensional Elasticity},
  year      = {2012},
  number    = {2},
  pages     = {645--706},
  volume    = {206},
  doi       = {10.1007/s00205-012-0539-y},
  publisher = {Springer},
}

@Article{Tou62,
  author    = {Toupin, R. A.},
  title     = {Elastic materials with couple-stresses},
  doi       = {10.1007/bf00253945},
  issn      = {1432-0673},
  number    = {1},
  pages     = {385--414},
  volume    = {11},
  journal   = {Archive for Rational Mechanics and Analysis},
  publisher = {Springer Science and Business Media LLC},
  year      = {1962},
}

@Article{FM23,
  author    = {Friedrich, Manuel and Machill, Lennart},
  title     = {One-dimensional viscoelastic von Kármán theories derived from nonlinear thin-walled beams},
  doi       = {10.1007/s00526-023-02525-3},
  issn      = {1432-0835},
  number    = {7},
  volume    = {62},
  journal   = {Calculus of Variations and Partial Differential Equations},
  month     = jul,
  publisher = {Springer Science and Business Media LLC},
  year      = {2023},
}

@Article{Tou64,
  author    = {Toupin, R. A.},
  title     = {Theories of elasticity with couple-stress},
  doi       = {10.1007/bf00253050},
  issn      = {1432-0673},
  number    = {2},
  pages     = {85--112},
  volume    = {17},
  journal   = {Archive for Rational Mechanics and Analysis},
  month     = jan,
  publisher = {Springer Science and Business Media LLC},
  year      = {1964},
}

@Article{MS13,
  author  = {Mielke, Alexander AND Stefanelli, Ulisse},
  journal = {Journal of the European Mathematical Society},
  title   = {Linearized plasticity is the evolutionary $\Gamma$-limit of finite plasticity},
  year    = {2013},
  number  = {3},
  pages   = {923--948},
  volume  = {15},
  doi     = {10.4171/JEMS/381},
}

@Book{KR19,
  author    = {Kru\v{z}\'{i}k, Martin and Roub\'{i}\v{c}ek, Tom\'{a}\v{s}},
  publisher = {Springer Cham},
  title     = {Mathematical Methods in Continuum Mechanics of Solids},
  year      = {2019},
  address   = {Cham},
  edition   = {1},
  isbn      = {978-3-030-02065-1},
  series    = {Interaction of Mechanics and Mathematics},
  doi       = {10.1007/978-3-030-02065-1},
}

@Book{MR15,
  author    = {Mielke, Alexander and Roub\'{i}\v{c}ek, Tom\'{a}\v{s}},
  publisher = {Springer New York},
  title     = {Rate-Independent Systems},
  year      = {2015},
  isbn      = {9781493927067},
  series    = {Applied Mathematical Sciences},
  doi       = {10.1007/978-1-4939-2706-7},
  issn      = {2196-968X},
  subtitle  = {Theory and Application},
}

@Article{FM06,
  author  = {Francfort, Gilles and Mielke, Alexander},
  journal = {Journal f{\"u}r die reine und angewandte Mathematik},
  title   = {Existence results for a class of rate-independent material models with nonconvex elastic energies},
  year    = {2006},
  number  = {595},
  pages   = {55--91},
  volume  = {2006},
  doi     = {10.1515/CRELLE.2006.044},
}

@Article{Lee69,
  author    = {Lee, E. H.},
  title     = {Elastic-Plastic Deformation at Finite Strains},
  doi       = {10.1115/1.3564580},
  issn      = {1528-9036},
  number    = {1},
  pages     = {1--6},
  volume    = {36},
  journal   = {Journal of Applied Mechanics},
  month     = mar,
  publisher = {ASME International},
  year      = {1969},
}

@Article{FJM06,
  author    = {Friesecke, Gero and James, Richard D. and M{\"u}ller, Stefan},
  journal   = {Archive for Rational Mechanics and Analysis},
  title     = {A Hierarchy of Plate Models Derived from Nonlinear Elasticity by Gamma-Convergence},
  year      = {2006},
  number    = {2},
  pages     = {183--236},
  volume    = {180},
  doi       = {10.1007/s00205-005-0400-7},
  publisher = {Springer},
}

@Article{MRS08,
  author    = {Mielke, Alexander and Roub\'{i}\v{c}ek, Tom\'{a}\v{s} and Stefanelli, Ulisse},
  journal   = {Calculus of Variations and Partial Differential Equations},
  title     = {$\Gamma$-limits and relaxations for rate-independent evolutionary problems},
  year      = {2008},
  number    = {3},
  pages     = {387--416},
  volume    = {31},
  doi       = {10.1007/s00526-007-0119-4},
  publisher = {Springer},
}

@Article{FJM02,
  author    = {Friesecke, Gero and James, Richard D. and M{\"u}ller, Stefan},
  journal   = {Communications on Pure and Applied Mathematics},
  title     = {A theorem on geometric rigidity and the derivation of nonlinear plate theory from three-dimensional elasticity},
  year      = {2002},
  number    = {11},
  pages     = {1461--1506},
  volume    = {55},
  doi       = {10.1002/cpa.10048},
  fjournal  = {Communications on Pure and Applied Mathematics: A Journal Issued by the Courant Institute of Mathematical Sciences},
  publisher = {Wiley Online Library},
}

@Article{Dav14_02,
  author  = {Davoli, Elisa},
  journal = {Mathematical Models and Methods in Applied Sciences},
  title   = {Quasistatic evolution models for thin plates arising as low energy $Gamma$-limits of finite plastictiy},
  year    = {2014},
  issn    = {0218-2025},
  number  = {10},
  pages   = {2085--2153},
  volume  = {24},
  doi     = {10.1142/s021820251450016x},
}

@Article{BGKN22-PP,
  author       = {Bartels, S{\"o}ren and Griehl, Max and Keck, Jakob and Neukamm, Stefan},
  title        = {Modeling and simulation of nematic LCE rods},
  year         = {2022},
  copyright    = {arXiv.org perpetual, non-exclusive license},
  doi          = {10.48550/arXiv.2205.15174},
  journaltitle = {arXiv preprint},
  keywords     = {Analysis of PDEs (math.AP), Numerical Analysis (math.NA), FOS: Mathematics, FOS: Mathematics, 74B20, 76A15, 74K10, 65N30, 74-10},
  publisher    = {arXiv},
}

@Article{OL24,
  author    = {van Oosterhout, Willem J. M. and Liero, Matthias},
  title     = {Finite‐strain poro‐visco‐elasticity with degenerate mobility},
  doi       = {10.1002/zamm.202300486},
  issn      = {1521-4001},
  number    = {5},
  volume    = {104},
  journal   = {Zeitschrift für Angewandte Mathematik und Mechanik},
  month     = mar,
  publisher = {Wiley},
  year      = {2024},
}

@Article{AD20,
  author    = {Virginia Agostiniani and Antonio DeSimone},
  journal   = {Mathematics and Mechanics of Solids},
  title     = {Rigorous derivation of active plate models for thin sheets of nematic elastomers},
  year      = {2020},
  number    = {10},
  pages     = {1804--1830},
  volume    = {25},
  doi       = {10.1177/1081286517699991},
  publisher = {{SAGE} Publications},
}
\end{document}